\renewcommand{\le}{\leqslant}
\renewcommand{\ge}{\geqslant}
\newcommand{\R}{\mathbb R}
\newcommand{\s}{\mathfrak s}
\newcommand{\w}{\mathfrak w}
\newcommand{\Q}{\mathbb Q}
\newcommand{\C}{{\mathbb C}}
\newcommand{\Z}{\mathbb Z}
\newcommand{\A}{\mathbb A}
\newcommand{\N}{\mathbb N}
\newcommand{\FF}{\mathbb F}
\newcommand{\conv}{{\rm conv}\,}
\newcommand{\codim}{{\rm codim}\,}
\newcommand{\F}{\mathcal F}
\newcommand{\D}{\mathcal D}
\newcommand{\J}{\mathcal J}
\renewcommand{\P}{\mathcal P}
\newcommand{\wc}{\mbox{\rm w-conv}}
\renewcommand{\c}{\mbox{\rm conv}}
\newtheorem{theorem}{Theorem}[section]
\newtheorem{lemma}[theorem]{Lemma}
\newtheorem{claim}[theorem]{Claim}
\newtheorem{cor}[theorem]{Corollary}
\newtheorem{prop}[theorem]{Proposition}
\newtheorem{obs}[theorem]{Observation}
\newtheorem{conj}[theorem]{Conjecture}
\theoremstyle{definition}
\newtheorem{defi}[theorem]{Definition}
\theoremstyle{remark}
\newtheorem*{remark}{Remark}
\theoremstyle{definition}
\newtheorem{example}[theorem]{Example}
\theoremstyle{definition}
\begin{document}

\begin{frontmatter}[classification=text]
\title{Convex geometry and the Erd{\H o}s--Ginzburg--Ziv problem}
\author[dz]{Dmitrii Zakharov}
\date{}

\begin{abstract}
Denote by ${\mathfrak s}({\mathbb F}_p^d)$ the Erd{\H o}s--Ginzburg--Ziv constant of $\FF_p^d$, that is, the minimum $s$ such that every sequence of $s$ vectors in ${\mathbb F}_p^d$ contains $p$ vectors whose sum is zero. Let ${\mathfrak w}({\mathbb F}_p^d)$ be the maximum size of a sequence of vectors $v_1, \ldots, v_s \in {\mathbb F}_p^d$ such that, for all integers $\alpha_1, \ldots, \alpha_s \ge 0$ with sum $p$, we have $\alpha_1 v_1 + \ldots + \alpha_s v_s \neq 0$ unless $\alpha_i = p$ for some $i$. 

In 1995, Alon--Dubiner proved that $\s(\FF_p^d)$ grows linearly in $p$ when $d$ is fixed. In this work, we determine the constant of linearity:
for fixed $d$ and growing $p$, we show that ${\mathfrak s}({\mathbb F}_p^d) = (1+o(1)) {\mathfrak w}({\mathbb F}_p^d) p$. Furthermore, for every prime $p$ and every $d$, we show that ${\mathfrak w}({\mathbb F}_p^d) \le {2d-1 \choose d}+1$. In particular,  ${\mathfrak s}({\mathbb F}_p^d) \le 4^d p$ for all sufficiently large $p$ and fixed $d$.
\end{abstract}
\end{frontmatter}

\section{Introduction}\label{sec1}

\subsection{History and new upper bound} 

In 1961, Erd{\H o}s, Ginzburg and Ziv \cite{EGZ} showed that, among every collection of $2n-1$ integers, one can always select exactly $n$ whose sum is divisible by $n$. Harborth \cite{Har} considered a higher-dimensional generalization of this problem: for given natural numbers $n$, $d$, what is the minimum number $s$ such that, among every collection of $s$ points in the integer lattice $\Z^d$, there are $n$ points whose centroid is also a lattice point?
Equivalently, after reducing points of the lattice $\Z^d$ modulo $n$, the quantity $s$ is the maximum size of a multiset of points in $\Z_n^d$ such that the sum of every $n$ of them is not congruent to 0 modulo $n$. In light of the latter interpretation, the number $s$ is denoted by $\s(\Z_n^d)$ and called the {\it Erd{\H o}s--Ginzburg--Ziv constant} of the group $\Z_n^d$. Note that points are allowed to coincide in this definition.
The problem of determining $\s(\Z_n^d)$ for various $n$ and $d$ has received considerable attention, but the precise value of $\s(\Z_n^d)$ is still unknown for the majority of parameters $(n, d)$. One can also define the Erd{\H o}s--Ginzburg--Ziv constant of an arbitrary finite abelian group $G$; see \cite{GG} for details and various generalizations.

Confirming a conjecture of Kemnitz \cite{Kem}, Reiher \cite{Rei} showed that $\s(\Z_n^2) = 4n-3$ for every $n \ge 2$. In \cite{AD}, Alon and Dubiner showed that for every $n$ and $d$ we have
\begin{equation}\label{adin}
    \s(\Z_n^d) \le (C d \log d)^d n
\end{equation}
for some absolute constant $C > 0$. In particular, if we fix $d$ and let $n \rightarrow \infty$, then $\s(\Z_n^d)$ grows linearly with $n$. On the other hand, it is not hard to see that $\s(\Z_n^d) \ge 2^d (n-1)+1$. Indeed, take the vertices of the boolean cube $\{0, 1\}^d$, with each vertex taken with multiplicity $n-1$. This multiset has no $n$ elements that sum up to 0 in $\Z_n^d$. 
The best known lower bound on $\s(\Z_n^d)$ is due to Edel \cite{Ed}:
\begin{equation}\label{lower}
    \s(\Z_n^d) \ge 96^{\lfloor d/6\rfloor} (n-1)+1 \approx 2.139^d n,
\end{equation}
which holds for all odd $n$. The corresponding set of points generalizes the boolean cube construction. Namely, it is a Cartesian product of $\lfloor d/6\rfloor$ copies of a certain explicitly constructed set $A \subset \Z^6$ of cardinality $96$, with each point taken with multiplicity $n-1$. Note that the condition that $n$ be odd is necessary: for $n = 2^k$, we have \cite{Har} that $\s(\Z_n^d) = 2^d(n-1)+1$ holds for all $d$.

The case when $n = p$ is a prime number is of particular interest. On the one hand, the vector space structure on $\Z_p^d \cong \FF_p^d$ significantly simplifies the analysis, but the problem is still highly non-trivial. On the other hand, as was observed for $d=1$ in \cite{EGZ}, one can deduce upper bounds on $\s(\Z_n^d)$ from upper bounds on $\s(\FF_p^d)$ by using a simple induction on the prime decomposition of $n$. 
This paper focuses on the case in which the dimension $d$ is fixed and $n= p$ is a very large prime number. We remark that the complementary case, i.e. the case in which the prime $p$ is fixed and the dimension $d$ is large, is also of great interest and, in a way, even more intriguing. 
The current best bounds are $\s(\FF_3^d) \le 2.756^d$ for $p=3$, proved by Ellenberg--Gijswijt in a breakthrough paper \cite{ElG}, and $\s(\FF_p^d) \le C_p (2 \sqrt{p})^d$ for $p \ge 5$, due to Sauermann \cite{Sau}. Note that the best known lower bound in this regime is also (\ref{lower}), which creates a significant gap between the bases of the exponents.
We refer to \cite{Sau} and references therein for the history and state of the art in this question. After the release of this paper, Sauermann and the author \cite{SauZ} showed that, for fixed $p$ and large $d$, we have $\s(\FF_p^d) \le D_{p, \varepsilon} (C_\varepsilon p^\varepsilon)^d$ for every fixed $\varepsilon>0$.

The main result of the present paper is an improvement of the Alon--Dubiner bound (\ref{adin}) for fixed $d$ and sufficiently large primes $p$.

\begin{theorem}\label{thcr}
Let $d \ge 1$ and $p > p_0(d)$ be a sufficiently large prime number. Then we have 
\begin{equation}\label{crude}
    \s(\FF_p^d) \le 4^d p.
\end{equation}
More generally, if all prime divisors of $n > 1$ are larger than $p_0(d)$, then we have $\s(\Z_n^d) \le 4^d n$. 
\end{theorem}

Unfortunately, the condition that $p > p_0$  is necessary for our arguments and cannot be removed. Note that by a classical argument from \cite{EGZ}, the bound for composite $n$ in Theorem \ref{thcr} (essentially) follows from the corresponding bound for primes.

\paragraph{Multiplicity $p-1$ sets.}
As we discussed above, taking each vertex of the boolean cube (or a certain more general set) with multiplicity $p-1$ leads to a lower bound construction for $\s(\FF_p^d)$. It is natural to ask what is the best possible lower bound construction of this form?
For $d \ge 1$ and a prime $p$, define $\w(\FF_p^d)$ to be the maximum number $s$ for which there are vectors $v_1, \ldots, v_s \in \FF_p^d$ with the following property: for all non-negative integers $\alpha_1, \ldots, \alpha_s$ with sum $p$, we have $\alpha_1 v_1 + \ldots + \alpha_s v_s \equiv 0 \pmod p$ if and only if $\alpha_i = p$ for some $i$. For brevity, let us call every set of vectors $X = \{v_1, \ldots, v_s\}$ satisfying this property {\em $p$-hollow} (a justification for this name will become clearer later).

Note that if $X$ is $p$-hollow, then taking each element of $X$ with multiplicity $p-1$ results in a multiset not containing $p$ vectors with zero sum. This implies that for all $p$ and $d$ we have
\begin{equation}\label{ggc}
    \s(\FF_p^d) \ge \w(\FF_p^d)(p-1)+1.
\end{equation}
It is easy to see that the Cartesian product of two $p$-hollow sets is again $p$-hollow, so every lower bound on $\w(\FF_p^{d_0})$ for some fixed $d_0$ extends to a lower bound for all $d \ge d_0$ by the product construction. In fact, all known lower bounds on $\s(\FF_p^d)$ follow from this observation combined with (\ref{ggc}), and in particular (\ref{lower}) follows from $\w(\FF_p^6) \ge 96$ for all $p > 2$.
In \cite{GG}, Gao--Geroldinger conjectured that equality holds in (\ref{ggc}). We confirm their conjecture asymptotically as $p \rightarrow \infty$. 

\begin{theorem}\label{main}
For every fixed $d \ge 1$ and $p \rightarrow \infty$, we have $\s(\FF_p^d) =  \w(\FF_p^d) p + o(p)$.
\end{theorem}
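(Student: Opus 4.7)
The lower bound $\s(\FF_p^d) \ge \w(\FF_p^d) p + o(p)$ is immediate from \eqref{ggc} together with the paper's upper bound $\w(\FF_p^d) \le \binom{2d-1}{d}+1 = O_d(1)$, since $\w(\FF_p^d)(p-1)+1 = \w(\FF_p^d) p - O_d(1)$ and $O_d(1) = o(p)$ as $p \to \infty$. The content of the theorem is the matching upper bound. Given a multiset $V = (v_1, \ldots, v_s)$ in $\FF_p^d$ with $s \ge \w(\FF_p^d) p + \omega(p)$ for some $\omega(p) \to \infty$, the goal is to find $p$ elements of $V$ summing to zero. One may assume no single vector appears with multiplicity $\ge p$ in $V$, since otherwise $p$ copies of that vector already give a zero sum.

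Fix a system of representatives and lift each $v_i$ to $\tilde v_i \in \{0, 1, \ldots, p-1\}^d \subset \Z^d$. Finding $p$ vectors of $V$ summing to zero in $\FF_p^d$ is equivalent to finding a sub-multiset of size $p$ whose lifted centroid lies in $\Z^d$. The plan is to produce such a centroid via the convex-geometric framework developed in the paper. First, apply the centerpoint theorem for convex flags to $\{\tilde v_i\}$ to locate a lattice point $c^* \in \Z^d$ which is deeply centered with respect to the rescaled cloud $\{\tilde v_i / p\} \subset [0,1)^d$. Working with flags rather than ordinary convex hulls is essential here, because $s = O_d(p)$ combined with the multiplicity bound forces only $O_d(1)$ distinct vectors to carry the bulk of $V$, so the cloud may concentrate on low-dimensional faces where ordinary depth vanishes. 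Then the convex-flag Helly theorem (the stated generalization of Doignon's integer Helly) should produce $k = O_d(1)$ vertices $\tilde v_{j_1}, \ldots, \tilde v_{j_k}$ and non-negative integer coefficients $\alpha_1, \ldots, \alpha_k$ with $\sum_t \alpha_t = p$ realizing $p c^* = \sum_t \alpha_t \tilde v_{j_t}$ in $\Z^d$. Using the slack $\omega(p)$ in $|V|$, one arranges each $\alpha_t$ to not exceed the multiplicity of $v_{j_t}$, and the identity yields the desired zero-sum $p$-subset.

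The main obstacle is this final matching step: the flag-Helly decomposition must use coefficients that are componentwise dominated by the multiplicity vector of $V$. Doignon's theorem alone is too rigid for this; one must choose $c^*$ and its accompanying flag adaptively with respect to the multiplicity distribution of $V$, treating the surplus $\omega(p)$ vectors as a reservoir of flexibility. The technical heart of the argument is expected to be this adaptive construction, coupled with the identification of the flag-Helly number as a quantity compatible with $\w(\FF_p^d)$ rather than the worst-case $2^d$ coming from Doignon.
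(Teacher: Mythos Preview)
Your lower bound is fine and matches the paper. The upper-bound sketch, however, misses the two main engines of the actual proof and contains a false structural claim.

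First, the claim that ``$s = O_d(p)$ combined with the multiplicity bound forces only $O_d(1)$ distinct vectors to carry the bulk of $V$'' is simply wrong: a multiset of size $\sim \w(\FF_p^d)p$ with all multiplicities $<p$ can perfectly well be supported on $\Theta(p)$ distinct points (e.g.\ take $p$ distinct points each with multiplicity $\w(\FF_p^d)$). So the picture of a cloud concentrating on a bounded number of faces is not available in general, and the na\"ive lift to $\{0,\dots,p-1\}^d$ followed by a single application of a centerpoint/Helly-type statement cannot produce a usable integer combination.

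The paper's proof is organized around exactly this difficulty. One first runs the Flag Decomposition Lemma (Theorem~\ref{fdl}), an iterative ``thick--thin'' structure theorem: it produces a convex flag $(\P,\Lambda)$ together with an $\FF_p$-representation $\varphi$ so that, roughly, along certain directions the multiset is contained in a bounded box (the ``thin'' part, where convex geometry applies), while along the remaining directions it is $(g(K),\delta)$-thick (no slab captures most of the mass). Only on this structured output does one apply the Centerpoint Theorem for convex flags, together with the bound $L(\P,\Lambda)\le \w(\FF_p^d)$ (Proposition~\ref{lbound}), and then Lemma~\ref{lm2} to obtain coefficients $\alpha_q$ with $\sum\alpha_q q\equiv 0\pmod p$ and $\alpha_q$ bounded by the available multiplicities. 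Crucially, this only gives a zero-sum \emph{after projecting to the thin coordinates}; it says nothing about the thick coordinates.

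The second engine---completely absent from your proposal---is the Set Expansion argument (Section~\ref{aldu}), which lifts this ``zero-sum modulo the flag'' to an honest zero-sum in $\FF_p^d$. This uses the thickness guaranteed by the decomposition together with iterated applications of Lemmas~\ref{ad} and~\ref{ad2} (Alon--Dubiner-type expansion) to show that the Minkowski sum of certain small difference sets fills an entire coset of the kernel subspace, from which one reads off the desired $p$-term zero sum. Without this step your coefficients $\alpha_t$ give $\sum_t \alpha_t \tilde v_{j_t} = p c^*$ only in the projected coordinates, not in $\FF_p^d$ itself, and there is no mechanism in your outline to close that gap.
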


Using the slice rank method, Naslund \cite{N} showed that $\w(\FF_p^d) \le 4^d-1$. So it follows that Theorem \ref{thcr} is in fact a consequence of Theorem \ref{main}. We have the following slight improvement of the slice rank bound:
\begin{prop}\label{thw}
For every $d \ge 1$ and every prime $p$, we have $\w(\FF_p^d) \le {2d-1 \choose d} + 1$.
\end{prop}
Note that $\w(\FF_p^1) = 2 = {1 \choose 1}+1$ and $\w(\FF_p^2) = 4 = {3 \choose 2} + 1$ so the bound in Proposition \ref{thw} is achieved for $d = 1, 2$. 
On the other hand, for $d=3$ it can be shown that $\w(\FF_p^3) = 9$ for large $p$ while Proposition \ref{thw} only gives an upper bound of 11.

{\em Notation.} We use the asymptotic notation $A \gg B$ to denote that $A \ge c B$ for some constant $c > 0$, possibly depending on other parameters. The set of natural numbers $\N$ is the set $\{0, 1, \ldots\}$. A multiset $X \subset A$ of some set $A$ is an unordered sequence of elements of $A$, possibly with repetitions. Two elements of $X$ are said to be distinct if they are on different positions in the sequence (even though they may coincide as elements of $A$).

\subsection{Connection to convex geometry} 

The main new ingredient in the proof of Theorem \ref{main} is a certain connection of the Erd{\H o}s--Ginzburg--Ziv problem to convex geometry. Recall that the original formulation of the question by Harborth was in terms of centroids of $n$ points in $\Z^d$. Thus, it is natural to expect that tools from convex geometry could be useful for tackling the problem. On the other hand, there does not seem to be a direct way to employ this idea. To the author's knowledge, convex geometry has not been used in the study of the Erd{\H o}s--Ginzburg--Ziv problem or related zero-sum problems before.

Throughout this paper, a polytope $P \subset \Q^d$ is the convex hull of a finite set of points in $\Q^d$. A lattice $\Lambda \subset \Q^d$ is an affine image of the set $\Z^r \subset \Q^r$ for some $r \le d$. We define a notion of integer points of polytopes in the following way.

\begin{defi}[Integer point]\label{intpt}
Let $P \subset \Q^d$ be a polytope and let $q \in P$. Let $\Gamma \subset P$ be the minimal face of $P$ containing the point $q$ and let $\Lambda$ be the minimal lattice containing all vertices of $\Gamma$. We say that $q$ is an {\it integer point} of $P$ if $q \in \Lambda$.
\end{defi}

Let us say a couple of words on why this notion is natural. 
If we have a polytope $P \subset \Q^d$, one might say that a point $q \in P$ is an integer point if simply $q \in \Z^d$. However, this notion depends on the choice of the integer lattice $\Z^d$ and therefore is not an `intrinsic' property of $P$ and $q$. To fix this, we could modify the definition as follows: let $\Lambda$ be the lattice spanned by the vertices of $P$ and say that a point $q \in P$ is integer if $q \in \Lambda$. This definition clearly does not depend on the lattice $\Z^n$ and is closer to what we want.
On the other hand, this notion has a problem. If $\Gamma \subset P$ is a face of $P$ and $q \in \Gamma$ is a point, then the properties of $q$ being integer with respect to $P$ and with respect to $\Gamma$ are not the same. It is easy to construct examples where $q$ is an integer point of $P$ but not of $\Gamma$. Thus, this notion of integer points is not invariant under passing to a face of $P$. To fix this, we introduce an additional step: we choose the minimal face $\Gamma$ containing $q$, define the lattice $\Lambda$ spanned by the vertices of $\Gamma$, and say that $q$ is an integer point of $P$ if $q \in \Lambda$. This definition is invariant both under changes of basis of $\Q^d$ and under passing to a face.

We say that a polytope $P \subset \Q^d$ is a {\it hollow} polytope if $P$ has no integer points besides its vertices. For $d\ge 1$, let $L(d)$ be the maximum number of vertices in a hollow polytope $P \subset \Q^d$. It turns out that vertices of a hollow polytope precisely correspond to $p$-hollow sets modulo almost all primes $p$.

\begin{prop}\label{wl}
Let $P \subset \Q^d$ be a hollow polytope and suppose that the set of vertices $X$ of $P$ is a subset in $\Z^d$. Then for all but a finite list of primes $p$, the reduction of $X$ modulo $p$ is a $p$-hollow set.
In particular, for $d \ge 1$ and all sufficiently large primes $p \ge p_0(d)$ we have $\w(\FF_p^d) \ge L(d)$. 
\end{prop}

Note that the list of forbidden primes can be written explicitly in terms of $P$, see Section \ref{theasy1} for details and the proof. 

As a matter of fact, all known lower bound constructions for $p$-hollow sets come from constructions of hollow polytopes, even though the notion of hollow polytopes has not been given explicitly in the zero sum set literature before.
In particular, Elsholtz \cite{El1} showed that $L(3) \ge 9$, Edel \cite{Ed} and Elsholtz \cite{El2} showed that $L(4) \ge 20$, and in \cite{Ed2} Edel showed that $L(5) \ge 42$, $L(6) \ge 96$, $L(7) \ge 196$. Note that the lower bound $L(6) \ge 96$ and a product construction give the best known asymptotic lower bounds on $\w(\Z_n^d)$ and $\s(\Z_n^d)$ cited previously.

In light of this, it seems reasonable to expect that the converse of Proposition \ref{wl} should also be true.

\begin{conj}\label{conj}
For $d \ge 1$ and all sufficiently large primes $p$ we have $\w(\FF_p^d) = L(d)$.
\end{conj}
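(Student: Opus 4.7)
To prove Conjecture \ref{conj} we must establish the inequality $\w(\FF_p^d) \le L(d)$ for sufficiently large primes $p$; the reverse direction is Proposition \ref{wl}. The natural plan is a compactness argument on rescaled lifts. For each large prime $p$, fix a weak EGZ configuration $v_1^{(p)}, \ldots, v_s^{(p)} \in \FF_p^d$ of size $s = \w(\FF_p^d)$, which is bounded independently of $p$ by Theorem \ref{thw}. Choose integer lifts $\tilde v_i^{(p)} \in \{0, \ldots, p-1\}^d$ and rescale to $w_i^{(p)} = \tilde v_i^{(p)}/p \in [0, 1]^d$. Passing to a subsequence of primes we may assume $w_i^{(p)} \to w_i^\ast$ for each $i$; after approximating by suitable rationals we may assume $w_i^\ast \in \Q^d$. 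Set $Q = \conv(w_1^\ast, \ldots, w_s^\ast)$. The goal is to show that $Q$ is a hollow polytope with $s$ distinct vertices, from which $s \le L(d)$ follows.

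Two items require verification. First, the vertices $w_i^\ast$ must be distinct: if $w_i^\ast = w_j^\ast$ then the integer vector $\tilde v_i^{(p)} - \tilde v_j^{(p)}$ has infinity-norm $o(p)$, and by combining this near-collision with trivial relations of the form $p\, v_k^{(p)} \equiv 0$ one should be able to construct non-negative integers $(\alpha_k)$ with $\sum \alpha_k = p$, both $\alpha_i, \alpha_j > 0$, and $\sum \alpha_k v_k^{(p)} \equiv 0 \pmod p$, contradicting the weak EGZ property. Second, $Q$ must be hollow. Suppose $Q$ contains a non-vertex integer point $q$ in its minimum face $\Gamma$ with vertex index set $I$. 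By Definition \ref{intpt} we have $q = \sum_{i \in I} c_i w_i^\ast$ with $c_i \in \Z$, $\sum c_i = 1$, and since $q$ lies in the relative interior of $\Gamma$ we also have $q = \sum_{i \in I} \lambda_i w_i^\ast$ with $\lambda_i > 0$, $\sum \lambda_i = 1$. The strategy is to use both representations together: set $\alpha_i = \lfloor p \lambda_i \rfloor + t c_i$ for $i \in I$ and $\alpha_i = 0$ for $i \notin I$, then choose $t \in \Z$ and redistribute small corrections so that $\sum \alpha_i = p$, $\sum \alpha_i v_i^{(p)} \equiv 0 \pmod p$, and $\alpha_i \ge 0$ for every $i$. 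Positivity on $I$ follows from $\lambda_i > 0$ and $p$ being large, and both $\alpha_i, \alpha_j > 0$ for some $i \ne j$ because $|I| \ge 2$, again contradicting the weak EGZ property.

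The main obstacle will be ensuring that the abstract integer relation among the limit points $w_i^\ast$ lifts to an exact modular relation among the $v_i^{(p)}$ at finite $p$. The lattice in Definition \ref{intpt} is the minimum lattice containing the vertices of the face $\Gamma$, which may be strictly finer than the naive integer lattice spanned by any finite-$p$ lifts, so the relation $\sum_{i \in I} c_i v_i^{(p)} \equiv 0 \pmod p$ is not automatic. To remedy this one should either select the lifts $\tilde v_i^{(p)}$ by a greedy or extremal process that tracks a prescribed finite set of lattice relations across $p$, so that the limit-level relations descend to every sufficiently large prime in the subsequence, or else bypass the compactness argument entirely in favour of a direct application of the convex-flag machinery and the Helly and centerpoint analogues developed earlier in the paper, bounding $\w(\FF_p^d)$ in terms of hollow-polytope invariants in the spirit of the proof of Theorem \ref{main}.
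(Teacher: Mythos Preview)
The statement you are attempting to prove is explicitly labelled a \emph{conjecture} in the paper, and the paper states immediately afterwards that it was able to verify it only for $d \le 3$. So there is no ``paper's own proof'' to compare against for general $d$; any complete argument you supplied would be a new result.

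Your proposal is not such an argument: it is a sketch that correctly identifies, and then fails to overcome, the essential difficulty. Two places where the outline genuinely breaks are the following.

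\textbf{Rational approximation of the limit.} You write ``after approximating by suitable rationals we may assume $w_i^\ast \in \Q^d$''. This step is illegitimate: the definition of a hollow polytope and of $L(d)$ is sensitive to the \emph{exact} lattice generated by the vertices, and an arbitrarily small perturbation of the $w_i^\ast$ can create or destroy integer points in the sense of Definition \ref{intpt}. There is no reason the true limit points should be rational, and replacing them by nearby rationals severs the link to the finite-$p$ configurations.

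\textbf{Lifting lattice relations back to $\FF_p$.} Even granting rational limits, the crux is what you flag at the end: an integer affine relation $\sum_{i\in I} c_i w_i^\ast = q$ among the limit points says nothing about $\sum_{i\in I} c_i v_i^{(p)} \pmod p$. The quantities $p\,w_i^{(p)}$ differ from $p\,w_i^\ast$ by amounts that are $o(p)$ but not $o(1)$, so after multiplying by integer coefficients and reducing mod $p$ you get an error that is $o(p)$ rather than zero. Your proposed fix $\alpha_i = \lfloor p\lambda_i\rfloor + t c_i$ does not help: the residual $\sum \alpha_i v_i^{(p)}$ is governed by this $o(p)$ drift, and there is no mechanism in the sketch to kill it. The same objection applies to the distinctness step: knowing $\tilde v_i^{(p)} - \tilde v_j^{(p)} = o(p)$ gives no nontrivial zero-sum with coefficients summing to $p$.

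The alternative you mention in the last sentence, using the convex-flag machinery directly, is closer in spirit to what the paper does for $d\le 3$ (via the structural Theorem \ref{wstr} and the explicit bound $L(3)\le 9$ in the Appendix), but the paper does not claim this yields the conjecture in general.
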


It is an easy exercise to check this for $d=1, 2$ and with some extra work one can show that $\w(\FF_p^3) = L(3) = 9$. The proof of $L(3)=9$ appears in the appendix of version 4 of the arXiv version of this paper. The fact that $\w(\FF_p^3) = L(3) $ can be established using the techniques of this paper. We decided to omit both of these results to keep the paper shorter. On the other hand, the next special case $d=4$ seems to be out of reach and even computing $L(4)$ seems computationally unfeasible.

All known lower bound constructions suggest that the case when the multiset $X \subset \FF_p^d$ is contained in a box $[-K, K]^d$ of bounded size $K$ should play a special role in the Erd{\H o}s--Ginzburg--Ziv problem. Namely, the exact value of the optimal constant $C$ in the bound $\s(\FF_p^d) \le (C+o_p(1)) p $ for large $p$ should come from this special case. This is essentially the content of Conjecture \ref{conj}. As a first step towards the proof of Theorem \ref{main}, and as a way to explain our key ideas, we show the following.

\begin{theorem}\label{thbounded}
    Fix $d, K \ge 1$, $\varepsilon>0$, and let $p > p_0(d, K, \varepsilon)$ be a prime. Suppose that $X \subset [-K, K]^d$ is a multiset of at least $(L(d)+\varepsilon) p$ elements. Then $X$ contains $p$ elements with zero sum modulo $p$.
\end{theorem}

Note that the constant $L(d)$ in the above result is tight, as can be seen by the lower bound constructions for the Erd{\H o}s--Ginzburg--Ziv problem discussed above.
This result can be thought of as a variant of Theorem \ref{main} in the important special case when $X$ is contained in a bounded-size box. The proof of Theorem \ref{thbounded} heavily relies on ideas from convex geometry, and we give a detailed overview of these ideas in Section \ref{over}. We note that Theorem \ref{thbounded} does not follow directly from Theorem \ref{main}, since we are unable to show that $\w(\FF_p^d) = L(d)$ holds for large $p$. Nevertheless, the proof of the former can be easily extracted from the proof of the latter. For this reason, we do not present a complete proof of Theorem \ref{thbounded} in this paper; instead, we give only an outline demonstrating the main ideas.

\subsection{A structural result} 
The proof of Theorem \ref{main} is based on the ideas from the proof of Theorem \ref{thbounded} (which we discuss later in Section \ref{over}) but requires new ingredients. Indeed, as the following construction shows, a large set $X \subset \FF_p^d$ without $p$ elements with zero sum need not be contained, or even almost contained, in a bounded-size box $[-K, K]^d$ in any coordinate system.

Let us take $1\le d' < d$ and consider a multiset $X' \subset \FF_p^{d'}$ without $p$ elements with zero sum, for example one constructed from a hollow polytope $P \subset \Q^{d'}$ or a $p$-hollow set in $\FF_p^{d'}$. Let $h: \FF_p^{d'} \rightarrow \FF_p^{d-d'}$ be a uniformly random function and let 
$$
X = \{ (x, h(x)), ~ x \in X'\} \subset \FF_p^{d'} \times \FF_p^{d-d'} \cong \FF_p^d.
$$
It is then easy to see that $X$ also does not contain $p$ elements with zero sum. On the other hand, the intersection of $X$ with an affine image of the box $[-K, K]^d$ is tiny and does not give much information about $X$.

To encapsulate constructions like this, we need a more detailed structural description of $X$.
Roughly speaking, for a given multiset $X \subset \FF_p^d$, we want to find a basis of $\FF_p^d$ and some $0\le d' \le d$ such that $X$ is `bounded' on the first $d'$ coordinates and `random' on the last $d-d'$ coordinates. Then, if $X$ does not contain $p$ elements with zero sum, we should expect this fact to be visible already in the projection $X' \subset [-K, K]^{d'}$ of $X$ onto the first $d'$ coordinates.
Indeed, this can be seen by the following heuristic argument. For the sake of contradiction, suppose that the projection $X'$ has some elements $x_1', \ldots, x_p' \in X'$ with sum zero. We want to lift the elements $x_i'$ to elements $x_i \in X$ such that the sum $y := x_1+\ldots +x_p$ is zero. The sum $y$ is already zero on the first $d'$ coordinates, so we only need to choose the $x_i$'s in such a way that the last coordinates are zero as well.
The set $X'$ is contained in a box $[-K, K]^{d'}$, where $K$ and $d$ are fixed and $p$ is large. In particular, we expect every element $x' \in X'$ to have at least $\frac{|X|}{(2K+1)^{d'}} \gg p$ preimages in $X$. By our assumption, these preimages are distributed quite randomly in the fiber $\{x'\} \times \FF_p^{d-d'}$. Thus, the set of all possible sums $y=x_1+\ldots+x_p$ should be rather uniformly distributed on the last $d-d'$ coordinates. In particular, we can find the $x_i$'s such that all last coordinates are zero. This gives $p$ elements $x_1, \ldots, x_p \in X$ with zero sum, contradicting the initial assumption. 

Thus, if $X$ has no $p$ elements with zero sum, then neither does its projection $X'$. But $X'$ is bounded, so we can apply Theorem \ref{thbounded} to $X'$ and conclude that $|X| = |X'| \le (L(d)+\varepsilon) p$. This is roughly how we will eventually prove Theorem \ref{main}. The actual argument, however, is more involved, and we only get the constant $\w(\FF_p^d)$ in the upper bound instead of $L(d)$.
As it turns out, splitting coordinates into two parts where $X$ is `bounded' and `random' is not sufficient to show that the projection $X'$ has no $p$ elements with zero sum. The obstruction comes from the precise notion of `random' that we need to use. Roughly speaking, it says that $X$ is not concentrated on any strip of width $K' \gg K$ around a hyperplane $H \subset \FF_p^d$, except for the hyperplanes $H$ coming from the first $d'$ coordinates (on which $X$ is in fact concentrated). This condition always implies that we can easily find subsets of $X$ of size $p$ whose sum is zero on the last $d-d'$ coordinates.
However, if we start with a collection of elements $x_1', \ldots, x_p' \in X'$ which sum to zero on the first $d'$ coordinates, then we are only allowed to choose elements from the subset $\tilde X \subset X$ consisting of elements $x$ of the form $(x'_i, \tilde x) \in X$ for some $i$ and $\tilde x$. We do not control exactly which set $\tilde X$ we get. It is quite possible that the `randomness' condition is violated for $\tilde X$, in which case the lifting procedure cannot be performed. 

To fix this problem, one might try to apply the same structural decomposition to the set $\tilde X$. Namely, one could consider a new coordinate system in which $\tilde X$ is bounded on the first $d''$ coordinates and `random' on the rest, for some new $d'' > d'$, and then try to run the lifting procedure on $\tilde X$ with respect to this new coordinate system.
The problem, however, is that we do not know whether the sum $x_1'+\ldots+x_p'$ is zero on the first $d''$ coordinates, since our first step guaranteed this only for the first $d'$ coordinates. It is tempting to apply the first step of the argument again with $d'$ replaced by $d''$ and with $X$ replaced by $\tilde X$. But the new set $\tilde X$ might be too small for that step to work directly. Indeed, the only information we have is that $\tilde X$ contains at least $p$ elements, which is far too small.
Thus, the first step of the proof, where we find elements $x_1', \ldots, x_p' \in X'$ which sum to zero on the first $d'$ coordinates, must already account for the possibility that the corresponding sets $\tilde X$ are not random enough for the lifting procedure. We want to ensure that whenever we find the collection of vectors $x_1', \ldots, x_p'$, their sum is not only zero on the first $d'$ coordinates but also zero on the first $d''$ coordinates after we apply the structural decomposition to the corresponding set $\tilde X$.

One of our main technical results, Theorem \ref{fdl}, which we call the Flag Decomposition Lemma, is designed exactly for this purpose. Section \ref{sfdl} is devoted to formulating and proving this result. Namely, we define certain poset structures, called `convex flags', which are composed of many subspaces in $\FF_p^d$ and polytopes in $\Q^{d'}$. We use these structures to decompose an arbitrary set of points $X \subset \FF_p^d$ into pieces with several useful properties.
Using these properties and the convex flag structure, we can adapt the ideas from the proof of Theorem \ref{thbounded} to obtain a collection of points whose sum is zero on the `bounded' part of the decomposition. Then the properties of the decomposition will give us sufficient randomness conditions on the remaining set of coordinates to run the lifting argument. In the end, this leads to the desired upper bound on the size of $X$. In particular, this strategy requires us to perform our convex geometry argument in an abstract poset setting. The precise statement is Theorem \ref{Helly}, which we call Helly's Theorem for Convex Flags, and we devote Section \ref{s2} to stating and proving it.

In Section \ref{set_expansion} we perform the lifting step of the argument. In many ways, this part of the proof is closely related to the Alon--Dubiner \cite{AD} proof of the linear upper bound $\s(\FF_p^d) \le C_d p$. Specifically, their argument corresponds precisely to the case $d' = 0$ in the above discussion, that is, to the case in which the set $X$ is entirely random-looking.
In this case, the delicate convex geometric obstructions are no longer present, and one can show much stronger bounds on $|X|$. Namely, as long as $X$ is random-looking enough and $|X| \ge (1+\varepsilon)p$ holds for some fixed $\varepsilon >0$, one can already find $p$ points with zero sum inside $X$. To show this, Alon--Dubiner \cite{AD} used tools from additive combinatorics and spectral graph theory. Roughly speaking, they showed that not only zero but in fact every element of $\FF_p^d$ can be expressed as a sum of $p$ elements of $X$.
They then deduce a linear upper bound on the size of $X$ by observing that, if the conditions for this argument are not satisfied, then for some hyperplane $H$ we have $|H\cap X| \gg |X|$. Thus, one can replace $X$ with $X \cap H$ and use induction on $d$ to finish the proof. In our situation, we use these additive combinatorics tools to lift the elements $x_1', \ldots, x_p' \in X'$ to elements of $X$ so that their sum is zero on the last $d-d'$ coordinates. Our argument requires some modifications, since we are more restricted in the choice of elements $x_i \in X$ that can be used for set expansion. However, the main idea and structure of this part of our argument are similar to those of Alon and Dubiner.

In Section \ref{balcon} we prove an auxiliary convex geometry statement which allows us to find $p$ points with zero sum using some geometric properties of $X$. We explain this in further detail in Section \ref{over} where we give an outline of the proof of Theorem \ref{thbounded}, which addresses the special case when $X$ is contained in a box of bounded size.

In Section \ref{s5} we put everything together and prove Theorem \ref{main}.

\subsection{Outline of the proof of Theorem \ref{thbounded}}\label{over}

Fix $d, K \ge 1, \varepsilon>0$, and let $p$ be a large prime. Consider a multiset $X \subset [-K, K]^d$ of size at least $(L(d)+\varepsilon) p$. We want to find $p$ elements of $X$ which sum to zero modulo $p$. Let us rephrase this problem in a more convenient form. Let $w: [-K, K]^d \rightarrow \N$ denote the indicator function of $X$ accounting for multiplicities. Then we want to find a point $q \in [-K, K]^d$ with integer coordinates
and non-negative integer coefficients $a_x$, for $x \in [-K, K]^d$, such that:
\begin{align}\label{thb1}
    \sum_{x \in [-K, K]^d} a_x &= p,\\
    q = \frac{1}{p}\sum_{x \in [-K, K]^d} &a_x x,\label{thb2}\\
    a_x \le w(x),\text{ for every }&x\in[-K, K]^d.\label{thb3}
\end{align}

For a function $w: \R^d \rightarrow \R_{\ge 0}$ with finite support, a point $q \in \R^d$ and $\theta \in [0, 1]$ we say that $q$ is $\theta$-{\it central} for $w$ if for every half-space $H^+ \subset \R^d$ which contains $q$ we have
$$
\sum_{x \in H^+} w(x) \ge \theta \sum_{x \in \R^d} w(x),
$$
i.e. the half-space $H^+$ contains at least a $\theta$-fraction of the weight of $w$.  

First, we consider a fractional relaxation of (\ref{thb1})-(\ref{thb3}); that is, we allow coefficients $a_x$ that are not necessarily integers. We observe that one can find the desired coefficients $a_x$ provided that the point $q$ on the left hand side of (\ref{thb2}) is $\theta$-central for $w$ with some parameter $\theta$. Indeed, if $q$ is $\theta$-central for $w$ for some $\theta>0$, then $q$ belongs to the convex hull of the support of $w$. Thus, there exists a convex combination with coefficient vector $(b_x)$, where $x \in \operatorname{supp}w$, such that $q = \sum_x b_x x$. It turns out that we can control the magnitude of the coefficients $b_x$ in terms of $\theta$:

\begin{prop}\label{propb}
    Let $w:\R^d \rightarrow \R_{\ge 0}$ be a function such that $\sum_x w(x) = W$ and let $q$ be a $\theta$-central point for $w$ for some $\theta > 0$. Then there exist real coefficients $b_x \ge 0$ such that: $\sum b_x = 1$, $q = \sum_x b_x x$ and for every $x$ we have $b_x \le \theta^{-1} \frac{w(x)}{W}$.
\end{prop}

See Section \ref{balcon} for a proof. In our application, we have $b_x = \frac{a_x}{p}$ and we need $a_x$ to satisfy $a_x \le w(x)$. Thus, the relaxed version of (\ref{thb1})-(\ref{thb3}) would follow from Proposition \ref{propb} if $q$ is a $\theta$-central point for $w$ with $\theta = \frac{p}{|X|}$. 

To solve the original question we have two problems:
\begin{itemize}
    \item[(i)] make sure that the coefficients $a_x = p b_x$ are integers,
    \item[(ii)] construct a point $q \in \Z^d$ which is $\theta$-central for $w$ for a suitable $\theta$,
\end{itemize}

To guarantee (i), we need to require an additional condition on $q$. Indeed, it is very possible that for some choices of $w: \Z^d \rightarrow \N$ and $q \in \Z^d$, no convex combination $b_x$ as in Proposition \ref{propb} has all $p b_x$ integral. For example, if the function $w$ is supported on a sublattice $\Lambda \subset \Z^d$ and $q \in \Z^d \setminus \Lambda$, then it is easy to check that if, for some prime $p$, we have $p b_x \in \Z$ for all $x$, then $p$ must divide the index $|\Z^d / \Lambda|$. 
To overcome this obstacle we put the following restriction:
\begin{equation}\label{thbq}
    q\text{ belongs to the minimal lattice spanned by the support of }w.
\end{equation}
This condition, however, is not sufficient either. Suppose that $d=2$, let $w$ be supported on the points $(0, 0), (2, 0), (0, 1), (1, 1) \in \Z^2$, and take $q = (1, 0)$. Then the support of $w$ spans the whole lattice $\Z^2$ and $q \in \Z^2$. However, for every $p>2$, there does not exist a convex combination of the form $q = \sum_{x \in \operatorname{supp} w} \frac{a_x}{p}x$ with integer $a_x$. Indeed, since $q$ lies on the boundary of the support of $w$, only points $(0, 0)$ and $(2, 0)$ can be used in the convex combination. We then run into the previous problem: $q$ does not belong to the lattice spanned by these points. Thus, we need to refine (\ref{thbq}) as follows. If $P$ is the convex hull of the support of $w$ and $\Gamma$ is the minimal face containing $q$, then we need
\begin{equation}\label{thbq2}
    q\text{ belongs to the minimal lattice spanned by the support of }w|_\Gamma.
\end{equation}

This condition turns out to be sufficient (up to some minor conditions on $w$).
Thus, if for some constant $\theta >0$ we can show that for a given function $w$ there exists a $\theta$-central point $q \in \Z^d$ satisfying (\ref{thbq2}), then for large $p$ there exist coefficients $a_x$ satisfying (\ref{thb1})-(\ref{thb3}) provided that $|X| > (1+\varepsilon)\frac{p}{\theta}$. The $\varepsilon$-error term comes from rounding coefficients when passing from a fractional to an integral solution. This would then imply the upper bound of $(1+\varepsilon) \frac{p}{\theta}$ in the special case of the Erd{\H o}s--Ginzburg--Ziv problem.

So far we have reduced our problem to constructing a $\theta$-central point $q$ with the additional integral property (\ref{thbq2}). Again, we start the discussion with relaxed versions of these conditions and then make the necessary adjustments to get what we need. The classical Centerpoint Theorem in convex geometry gives us a way to construct central points for arbitrary functions on $\R^d$:
\begin{theorem}[Centerpoint Theorem]\label{centerb}
    Let $w:\R^d \rightarrow \R_{\ge 0}$ be a function with finite support. Then there exists a $\frac{1}{d+1}$-central point $q \in \R^d$ for $w$.
\end{theorem}

This result is a consequence of another classical convex geometry result, Helly's Theorem:
\begin{theorem}[Helly's Theorem]\label{hell}
    Let $\F$ be a collection of compact convex sets in $\R^d$ such that every $d+1$ of them share a common point. Then all sets in $\F$ share a common point.
\end{theorem}

Note that the number $d+1$ in the statement cannot be lowered since otherwise we can take $\F$ to be the collection of faces of a $d$-dimensional simplex $\Delta = \conv\{0, e_1, \ldots, e_d\} \subset \R^d$.

\begin{proof}[Proof of Theorem \ref{centerb}]
    Let $\F$ be the collection of all closed half-spaces $H^+$ such that $w(H^+) > \frac{d}{d+1} w(\R^d)$ (to make the sets compact we can intersect $H^+$ with a ball of sufficiently large radius). Now, by the pigeonhole principle, every collection of $d+1$ halfspaces in $\F$ shares a common point in the support of $w$. Thus, by Helly's theorem there exists a point $q$ belonging to all half-planes in $\F$. This point is $\frac{1}{d+1}$-central for $w$.
\end{proof}

In order to construct central points $q$ with additional properties, we need to prove more refined versions of Helly's Theorem. The deduction of the corresponding Centerpoint Theorem always proceeds in the same way as in the above argument.

Now to get a central point satisfying (\ref{thbq}) (which in itself is not enough for us but closer to the condition (\ref{thbq2}) which we really want) we can use the following integer Helly's Theorem due to Doignon \cite{D}:
\begin{theorem}[Integer Helly's Theorem]\label{inthell}
    Let $\F$ be a collection of compact convex sets in $\R^d$ such that every $2^d$ of them share a common point $q\in \Z^d$. Then all sets in $\F$ share a common point $q \in \Z^d$.
\end{theorem}

Note that the constant $2^d$ in the above is tight: let $\F$ consist of convex sets $F=\conv (\{ 0, 1\}^d \setminus \{x\})$ over all vertices $x$ of the boolean cube $\{0, 1\}^d$. Then every $2^{d}-1$ of the sets in $\F$ share an integer point, but the intersection of all of them is disjoint from $\Z^d$. Using this result, we can find a $2^{-d}$-central point $q$ lying in the lattice spanned by the support of $w$. If this point happens to lie in the interior of the convex hull of the support of $w$, then the second part of the argument goes through and we get $|X| < (1+\varepsilon) 2^d p$. Recall, however, that for $d\ge 3$ the Erd{\H o}s--Ginzburg--Ziv constant is strictly larger than $2^d$, so this strategy has no chance of working without modifying Theorem \ref{inthell} in some way. 

To illustrate that it is not always possible to find a point $q$ in Theorem \ref{inthell} which lies in the interior of $\conv(\operatorname{supp}w)$, let us consider the following example coming from the lower bound $\s(\FF_p^3) \ge 9(p-1)+1$. Consider the following collection of 9 points in $\Z^3$:
\begin{align*}
S = \{ (0, 0, 0), (1, 0, 0), (0, 1, 0), (1, 1, 0),\\
    (2, 0, 1), (2, 1, 1), (0, 2, 1), (1, 2, 1),\\
    (2, 2, 2)\},    
\end{align*}
let $w$ be the characteristic function of $S$. The minimal lattice containing $S$ is $\Z^3$ and the $\frac18$-central point $q$ guaranteed by Theorem \ref{inthell} is $q = (1, 1, 1)$. Indeed, every half-space containing $q$ contains at least $2$ points of $S$. The point $q$ does not belong to the interior of $P = \conv S$: it lies on a face $\Gamma \subset P$ given by 
$$
\Gamma = \conv\{ (0, 0, 0), (2, 2, 2), (2, 0, 1), (0, 2, 1) \}.
$$
Note that the minimal lattice $\Lambda$ containing the 4 points above is smaller than the intersection of $\Z^3$ with the affine hull of $\Gamma$ and that the central point $q$ does not belong to it. Thus, our argument so far breaks down on this example. Indeed, this example shows that the Erd{\H o}s--Ginzburg--Ziv constant in 3 dimensions is at least 9. Moreover, note that the polytope $P$ is in fact a hollow polytope. 
This is not a coincidence: it turns out that hollow polytopes play a similar role for our variant of Helly's Theorem as the role of a simplex $\Delta$ for Theorem \ref{hell} and the role of the boolean cube $\{0, 1\}^d$ for Theorem \ref{inthell}. 

By adapting the proof of Theorem \ref{inthell} from \cite{D} we can prove a variant of Helly's Theorem which achieves (\ref{thbq2}):

\begin{restatable}{theorem}{maintheorem}\label{thm:cpt}
Let $P \subset \Q^d$ be a polytope and let $w: P \rightarrow \R_{\ge 0}$ be a function with finite support. Then there exists a face $\Gamma \subset P$ and a point $q$ in the interior of $\Gamma$ such that $q$ is $\frac{1}{L(d)}$-central for $w$ and $q$ belongs to the lattice spanned by the support of $w|_\Gamma$.
\end{restatable}

To obtain (\ref{thbq2}) we apply this theorem to $P = \conv(\operatorname{supp} w)$. The constant $L(d)$ in Theorem \ref{thm:cpt} is tight: similarly to the previous examples, let $P$ be a hollow polytope with $L(d)$ vertices and define $\F$ to be the family of sets $\conv(S \setminus \{x\})$ where $S$ is the set of vertices of $P$ and $x$ ranges over $S$.

We prove Theorem \ref{thm:cpt} in Section \ref{polyhelly}. Now we finally have all the tools needed to prove Theorem \ref{thbounded}. We start with an arbitrary set $X \subset [-K, K]^d$ of size at least $(1+\varepsilon) L(d) p$ and let $w$ be its characteristic function. After pruning $X$ a bit to remove all elements with very small multiplicity, we let $P = \conv(X)$ and apply Theorem \ref{thm:cpt} to $P$ and $w$. We obtain a face $\Gamma$ and a point $q$ in the interior of $\Gamma$ which is $\theta$-central for $w$, with $\theta = \frac{1}{L(d)}$, and lies in the lattice spanned by $X \cap \Gamma$.
Using Proposition \ref{propb} and the lattice condition, we conclude that, for large enough $p$, there are nonzero coefficients $b_x = \frac{a_x}{p}$ with sum 1 such that $q = \sum_{x \in X} b_x x$. Moreover, for each $x \in X$, we have $a_x\in \N$ and 
$$
a_x = p b_x \le p (1+\varepsilon) \theta^{-1} \frac{w(x)}{|X|} = \frac{(1+\varepsilon)L(d)p}{|X|} w(x) \le w(x),
$$
so we obtain the desired $p$ elements of $X$ which sum to zero modulo $p$.

\section{Proofs of Proposition \ref{thw} and Proposition \ref{wl}}

\subsection{Proof of Proposition \ref{thw}}\label{theasy2}

We argue indirectly. Assume that there are vectors $v_1, \ldots, v_n \in \FF_p^d$, with $n \ge {2d-1 \choose d}+2$ such that for every non-negative integers $\alpha_1, \ldots, \alpha_n$ whose sum is $p$, we have $\sum \alpha_i v_i = 0$ if and only if $\alpha_i = p$ for some $i$. Note that this condition implies that vectors $v_i$ are pairwise distinct. Let $S = \{v_1, \ldots, v_n\}$.

\begin{claim}\label{h}
There is a nonzero function $h: \{1, \ldots, n\} \rightarrow \FF_p$ such that $h(n) = 0$ and for every polynomial $f \in \FF_p[x_1, \ldots, x_d]$ of degree at most $d-1$ we have
\begin{equation*}
    \sum_{i = 1}^n h(i) f(v_i) = 0.
\end{equation*}
\end{claim}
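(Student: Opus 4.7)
The proof of Claim \ref{h} is essentially a dimension count. The plan is to turn the condition ``$\sum_{i} h(i) f(v_i) = 0$ for every $f$ of degree $\le d-1$'' into a linear system in the unknowns $h(1), \ldots, h(n-1)$ and observe that the number of unknowns exceeds the number of equations.

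More precisely, let $V \subset \FF_p[x_1, \ldots, x_d]$ denote the $\FF_p$-vector space of polynomials of degree at most $d-1$. A standard basis is given by the monomials $x_1^{a_1} \cdots x_d^{a_d}$ with $a_1 + \ldots + a_d \le d-1$, whose number is $\binom{(d-1)+d}{d} = \binom{2d-1}{d}$, so $\dim_{\FF_p} V = \binom{2d-1}{d}$. For each $i$, evaluation at $v_i$ defines a linear functional $\mathrm{ev}_{v_i} \in V^*$. Now consider the linear map
\[
    \Phi : \FF_p^{n-1} \to V^*, \qquad (h_1, \ldots, h_{n-1}) \mapsto \sum_{i=1}^{n-1} h_i \, \mathrm{ev}_{v_i}.
\]

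By hypothesis $n \ge \binom{2d-1}{d}+2$, so the domain of $\Phi$ has dimension $n-1 \ge \binom{2d-1}{d}+1 > \dim V^*$. Hence the kernel of $\Phi$ is nontrivial, so we may pick $(h_1, \ldots, h_{n-1}) \ne 0$ with $\sum_{i=1}^{n-1} h_i f(v_i) = 0$ for every $f \in V$. Setting $h(i) = h_i$ for $i < n$ and $h(n) = 0$ produces a nonzero function on $\{1, \ldots, n\}$ satisfying the required identity for every $f$ of degree at most $d-1$.

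There is essentially no obstacle here — the content of Theorem \ref{thw} is presumably pushed into a later step that exploits this $h$ (by combining it with a slice-rank or polynomial method argument applied to a suitable polynomial of degree $d-1$ involving the forbidden zero-sum structure); the claim itself is a routine linear-algebra fact, and the only thing to check carefully is the monomial count $\binom{2d-1}{d}$ for $\dim V$, which is where the numerical bound in the statement of Theorem \ref{thw} comes from.
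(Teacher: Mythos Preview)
Your proof is correct and takes essentially the same approach as the paper: both arguments are straightforward dimension counts, using that the space of polynomials of degree at most $d-1$ in $d$ variables has dimension $\binom{2d-1}{d}$, so the constraints $h(n)=0$ and $\sum_i h(i)f(v_i)=0$ for all such $f$ amount to at most $\binom{2d-1}{d}+1$ linear conditions on $n \ge \binom{2d-1}{d}+2$ unknowns. The only cosmetic difference is that you impose $h(n)=0$ up front and work with $n-1$ unknowns, whereas the paper phrases it as $\binom{2d-1}{d}+1$ equations in $n$ unknowns.
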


\begin{proof}
Recall that the dimension of the linear space of polynomials with $\FF_p$-coefficients of degree at most $d-1$ is equal to $2d-1 \choose d$. Thus, the desired function $h$ is a solution of a system consisting of ${2d-1 \choose d} + 1$ linear equations in $n \ge {2d-1 \choose d} + 2$ variables.
\end{proof}

For $i = 1, \ldots, p$ and $j = 1, \ldots, d$, let $y_{i, j}$ be a set of variables. Let $y_i$ be the $d$-dimensional vector $(y_{i, 1}, \ldots, y_{i, d})^T$.
Consider the following polynomial in $p \times d$ variables:
\begin{equation}\label{poly}
    F(y_1, \ldots, y_p) = \prod_{j = 1}^d \left( 1 - \left(\sum_{i = 1}^p y_{i, j}\right)^{p-1}  \right).
\end{equation}
Note that if we substitute in $P$ some vectors $y_i \in \FF_p^d$ then $F(y_1, \ldots, y_p) = 1$ if $y_1 + \ldots + y_p = 0$ and it equals 0 otherwise. Thus, if we consider a sequence $v_{i_1}, \ldots, v_{i_p}$ of $p$ elements of $S$ then $F(v_{i_1}, \ldots, v_{i_p}) =1$ if $i_1 = \ldots = i_p$ and $F(v_{i_1}, \ldots, v_{i_p}) = 0$ otherwise.

Now we define a function $\Phi: \{1, \ldots, n\} \rightarrow \FF_p$ by:
\begin{equation}\label{f}
    \Phi(t) = \sum_{i_1, \ldots, i_{p-1} \in [n]} h(i_1)\ldots h(i_{p-1}) F(v_{i_1}, \ldots, v_{i_{p-1}}, v_t).
\end{equation}
Let us compute $\Phi(t)$ in two different ways and arrive at a contradiction. On the one hand, the summand in (\ref{f}) is zero unless $v_{i_1} = \ldots = v_{i_{p-1}} = v_t$. Hence 
\begin{equation}\label{eqn}
    \Phi(t) \equiv h(t)^{p-1} \pmod p.
\end{equation}
On the other hand, $F(y_1, \ldots, y_p)$ is a polynomial in variables $y_{i, j}$ of degree $d(p-1)$ and so it can be expressed as a linear combination of monomials of the form $m_1(y_1) m_2(y_2) \ldots m_{p}(y_p)$ where $m_i \in \Z[x_1, \ldots, x_d]$ and $\sum_{i=1}^p \deg m_i \le (p-1)d$. Restricting the sum (\ref{f}) on a fixed monomial we obtain:
\begin{equation}\label{sm}
    \sum_{i_1, \ldots, i_{p-1} \in [n]} h(i_1)\ldots h(i_{p-1}) m_1(v_{i_1}) m_2(v_{i_1}) \ldots m_{p-1}(v_{i_1}) m_{p}(v_t) = m_{p}(v_t) \prod_{j = 1}^{p-1} \left ( \sum_{i = 1}^n h(i)m_j(v_{i}) \right).
\end{equation}
Thus, by Claim \ref{h}, if $\deg m_j \le d-1$ for some $j \le p-1$ then the corresponding multiple in (\ref{sm}) must be zero. Otherwise, $\deg m_j \ge d$ for all $j \le p-1$. But this implies that $\deg m_p = 0$, that is, $m_p$ is a constant function. Thus, in either case the expression (\ref{sm}) does not depend on $t$. However, by the construction of $h$ and (\ref{eqn}) we have $\Phi(n) \equiv 0 \pmod p$ and $\Phi(t)$ is not zero for some $t \in \{1, \ldots, n\}$ because $h$ is a nonzero function by Claim \ref{h}. This contradiction completes the proof.

\subsection{Proof of Proposition \ref{wl}}\label{theasy1}

We begin with a different characterization of integer points of polytopes. For a polytope $P \subset \Q^d$, we denote by $\Lambda(P)$ the minimal by inclusion lattice containing the vertices of $P$. We say that a prime $p$ is {\it $P$-good} if, for every face $\Gamma \subset P$, the quotient group $\Lambda(P) / \Lambda(\Gamma)$ has no elements of order $p$.

\begin{claim}\label{crit}
Let $P \subset \Q^d$ be a polytope and let $q \in P$ be a point. Let $q_1, \ldots, q_s$ be the vertices of $P$. The following assertions are equivalent:
\begin{itemize}
    \item[(i)] The point $q$ is an integer point of $P$.
    \item[(ii)] There exists a constant $n_0(P)$ such that for all numbers $n > n_0(P)$ there are nonnegative integer coefficients $\alpha_1, \ldots, \alpha_s$ such that:
\begin{equation}\label{e}
    \sum_{i=1}^s \alpha_i q_i = n q, ~~ \sum_{i=1}^s \alpha_i = n.
\end{equation}
    \item[(iii)] The point $q$ belongs to the minimal lattice containing points $q_1, \ldots, q_s$ and the condition on $n$ in (ii) is satisfied for some $n = p$, where $p$ is a $P$-good prime.
\end{itemize}
\end{claim}

\begin{proof}
If $q$ is a vertex of $P$, then there is nothing to prove. Thus, for the rest of the proof, we may assume that $q$ is not a vertex of $P$.

\paragraph{(i)$\Rightarrow$(ii).} By replacing $P$ with the minimal face containing the point $q$ we reduce to the case when $q$ is an interior point of $P$.
This implies that there exists a convex combination 
\begin{equation*}
    (q, 1) = \sum_{i = 1}^s \beta_i (q_i, 1),
\end{equation*}
where all coefficients $\beta_i$ are positive rational numbers. Let $m_0$ be the least common multiple of the denominators of $\beta_i$. Since there are only a bounded number of points in $\Lambda(P) \cap P$, $m_0$ is bounded by a constant $m_0(P)$. Then we can write $\beta_i = b_i / m_0$ for some positive integers $b_i$.

Since $q$ belongs to $\Lambda(P)$, there is an integer affine combination
\begin{equation}
    \sum_{i = 1}^s c_i (q_i, 1) = (q, 1),
\end{equation}
where $c_i \in \Z$ are integer coefficients. Let $K = \max |c_i|$; again, we have $K \le K_0(P)$ for some constant $K_0(P)$. We claim that one can now take $n_0(P) = 2 K_0(P) m_0(P)^2$. Indeed, consider an arbitrary number $n > 2K m_0^2$. Write $n =  m_0 k + r$ for some $0 \le r < m_0$ and define the coefficients $\alpha_i$ by setting $\alpha_i = k b_i + r c_i$. Then we have
\begin{equation}
    \sum_{i=1}^s \alpha_i(q_i, 1) = k \sum_{i=1}^s b_i(q_i, 1) + r \sum_{i = 1}^s c_i(q_i, 1) = (km_0 +r)(q, 1) = n(q, 1),
\end{equation}
and for every $i$ we have $\alpha_i = kb_i + rc_i \ge k - rK \ge \lfloor n / m_0\rfloor - K m_0 > 0$ by the choice of $n$. Thus, the coefficients $\alpha_i$ satisfy (ii). 

\paragraph{(ii)$\Rightarrow$(iii).} All primes $p\ge 2$ except for a finite collection are $P$-good, so we can take $p$ to be a $P$-good prime larger than $n_0(P)$ and apply (ii). To see that $q$ lies in the minimal lattice, apply (ii) for two consecutive values of $n$ and take the difference of the corresponding expressions (\ref{e}).

\paragraph{(iii)$\Rightarrow$(i).} Let $\Gamma$ be the minimal face of $P$ containing $q$. By a shift of coordinates we may assume that the origin $0$ lies in $\Lambda(\Gamma)$ so that it becomes a linear lattice, not just an affine one. 
By our assumption, $q \in \Lambda(P)$ and there is a $P$-good prime $p$ and nonnegative integer coefficients $\alpha_1, \ldots, \alpha_s$ such that
$$
\sum_{i=1}^s \alpha_i (q_i, 1) = p(q, 1).
$$
Since $q \in \Gamma$, we have $\alpha_i = 0$ for all $i$ such that $q_i \not \in \Gamma$. Let $\Lambda$ be the intersection of $\Lambda(P)$ with the affine hull of $\Gamma$. Then the quotient $G= \Lambda/\Lambda(\Gamma)$ is a finite group, and the definition of a $P$-good prime implies that $p$ is coprime to $|G|$. Let $b > 1$ be an integer such that $p b = 1 \pmod{|G|}$. Then
\begin{equation}\label{latticeeq}
(q, 1) = pb (q, 1) - (pb-1) (q, 1) = \sum_{i=1}^s \alpha_i b (q_i, 1) - \frac{pb-1}{|G|} (|G| q, |G|).
\end{equation}
By the definition of $\Lambda(\Gamma)$, the points $q_i$ with $\alpha_i >0$ belong to $\Lambda(\Gamma)$. By Lagrange's theorem the point $|G|q$ belongs to $\Lambda(\Gamma)$ so, by (\ref{latticeeq}), the point $q$ lies in $\Lambda(\Gamma)$ as well. This completes the last implication and the claim is proved. 
\end{proof}

Now we are ready to prove Proposition \ref{wl}. Let $P \subset \Q^d$ be a hollow polytope with $L(d)$ vertices. After rescaling $P$, we may assume that $P \subset \Z^d$ and that $\Z^d$ is the minimal lattice containing the vertices of $P$. Denote the vertices of $P$ by $q_1, \ldots, q_s$. Let $p$ be a $P$-good prime. Then we can view the vertices of $P$ as a subset of $\FF_p^d$. If $P$ modulo $p$ has a zero-sum $\sum \alpha_i q_i \equiv 0 \pmod p$ for some non-negative integers $\alpha_i$ whose sum is $p$, then the point $q = \frac{1}{p} \sum \alpha_i q_i$ belongs to $P \cap \Z^d$. By Claim \ref{crit}, $q$ is an integer point of $P$. Since $P$ is hollow, we must have $q = q_i$ for some $i$, which implies that $\alpha_i = p$. 
We conclude that $\w(\FF_p^d) \ge L(d)$ for all primes $p$ which are $P$-good, and in particular this is true for sufficiently large primes $p$.

\section{Convex flags and Helly's theorem}\label{s2}

\subsection{Basic notions} 

In this section we define a certain generalization of polytopes which we call {\em convex flags}. Convex flags will be a convenient way to describe the combinatorial structures appearing during the proof of Theorem \ref{main}. We start by explaining how a polytope $P$ can be viewed as a convex flag, and after that we give a general definition.

Recall that a {\it polytope} $P$ in $\Q^d$ is a convex hull of a finite, non-empty set of points of $\Q^d$. Note that the dimension of $P$ may be less than $d$. For a polytope $P$ in $\Q^d$ let $\P(P)$  be the set of all faces of $P$ (including $P$ itself but excluding the ``empty'' face) with the partial order induced by inclusion. 

Note that for every set of faces $S \subset \P(P)$ there is a unique minimal face $\Gamma \in \P(P)$ which contains all faces from $S$. 
Based on this observation, we call an arbitrary (finite) poset $\P$ {\it convex} if every subset $S \subset \P$ has a {\it supremum} $\sup S$. That is, the set of all $x \in \P$ such that $y \preceq x$ for every $y \in S$ has a minimum element\footnote{This terminology is not standard. In literature, posets which have this property are usually called {\it upper semilattices} but we prefer to use a simpler and more intuitive term instead. }.

Let $P_1, P_2$ be arbitrary polytopes in some $\Q$-spaces $\A_1$ and $\A_2$. An affine map $\psi: \A_1 \rightarrow \A_2$ is called a {\it map of polytopes} from $P_1$ to $P_2$ if $\psi(P_1) \subset P_2$. Clearly, a composition of maps of polytopes is again a map of polytopes. Note that $\psi$ is neither assumed to be injective nor surjective. 

Note that if $P_1$ is a face of $P_2$ then the corresponding inclusion map $\psi_{P_2, P_1}$ is a map of polytopes from $P_1$ to $P_2$. 
Thus, we can equip the set $\P(P)$ of faces of a polytope $P$ with the following structure: for every pair $x \preceq y \in \P(P)$ we consider the corresponding inclusion map $\psi_{y, x}$. We have thus encoded the structure of the original polytope $P$ in terms of its faces and inclusion maps between them. If we now allow maps $\psi_{y, x}$ that are not necessarily injective and replace $\P(P)$ by an arbitrary convex poset $\P$ then we arrive at the notion of a convex flag.

\begin{defi}[Convex flag] 
A {\em convex flag} is a convex poset $(\P, \prec)$ with the following additional structure.
For every $x \in \P$ there is a polytope $P_x \subset \A_x$ embedded in a $\Q$-space $\A_x$ and for every comparable pair $y \preceq x$ there is a map $\psi_{x, y}: \A_y \rightarrow \A_x$ from $P_y$ to $P_x$ with the property that for every chain $z \preceq y \preceq x$ we have $\psi_{x, z} = \psi_{x, y}\psi_{y, z}$. In particular, $\psi_{x, x}$ is the identity map of $\A_x$.
\end{defi}

When we say that $\P$ is a convex flag, we mean that $\P$ is a convex poset and that we have fixed corresponding polytopes $P_x \subset \A_x$ and maps $\psi_{x, y}$. 
As mentioned above, every polytope $P$ naturally corresponds to a convex flag, which we denote by $\P(P)$. Let us provide some other examples of convex flags.

\begin{example}[Binary tree, Figure \ref{fig1}]\label{ex1}
Let $\P$ be the set of strings $a_1a_2\ldots a_i$ consisting of $0$'s and $1$'s and of length $i \le d$ (including the empty string). For strings $s_1, s_2$ we have $s_1 \succeq s_2$ if $s_1$ is an initial segment of $s_2$. Note that $|\P| = 2^{d+1}-1$. 

For every $s \in \P$ let $\A_s = \Q$ and $P_s = [0, 1]$. Let $s \in \P$ and $s' = s a$ be a successor of $s$, where $a \in \{0, 1\}$. We define the map $\psi_{s, sa}: [0, 1] \rightarrow [0, 1]$ to be the projection on the point $a \in [0, 1]$. 
\end{example}

\begin{example}[Sunflower, Figure \ref{fig2}]\label{ex2}
Let $\P = \{ a, b_1, \ldots, b_n,  c_1, \ldots, c_n\}$. Here $a$ is the maximum element of $\P$ while elements $b_i$ and $c_i$ are ordered as follows: we have $c_i \prec b_i$ and $c_i \prec b_{i+1}$ (with indexes taken modulo $n$). Let $P_a \subset \R^2$ be an arbitrary $n$-gon and let $E_1, \ldots, E_n$ be the edges of $P_a$ labeled in a cyclic order. Let $v_{i-1}, v_i$ be the vertices of the edge $E_i$.

Let $P_{b_i} \subset \R^2$ be an arbitrary polygon which has a pair of parallel edges $F_{i0}, F_{i1} \subset P_{b_i}$. For every $i = 1, \ldots, n$, let $P_{c_i} = [0, 1]$. We now define maps between the polygons $P_a, P_{b_i}, P_{c_i}$. The map $\psi_{a, b_i}: P_{b_i} \rightarrow P_a$ is a projection of $P_{b_i}$ along its edges $F_{i0}$ and $F_{i1}$ onto the edge $E_i$. In particular, we have $\psi_{a, b_i}(F_{i0}) = v_{i-1}$ and $\psi_{a, b_i}(F_{i1}) = v_i$.
Now let $\psi_{b_i, c_i}: P_{c_i} \rightarrow P_{b_i}$ be an arbitrary affine map such that $\psi_{b_i, c_i}(P_{c_i}) \subset F_{i1}$. Similarly, let $\psi_{b_i, c_{i-1}}: P_{c_{i-1}} \rightarrow P_{b_i}$ be an arbitrary affine map such that $\psi_{b_i, c_{i-1}}(P_{c_{i-1}}) \subset F_{i0}$. 

The map $\psi_{a, c_i}: P_{c_i}\rightarrow P_a$ is now defined uniquely: we let $\psi_{a, c_i}(x) = v_i$ for every $x \in P_{c_i}$. This definition implies that $\psi_{x, z} = \psi_{x, y}\psi_{y, z}$ for all $z \preceq y \preceq x$ in $\P$. Indeed, the only triples $x, y, z$ for which this equality does not follow automatically are $(x, y, z) = (a, b_i, c_i)$ and $(a, b_i, c_{i-1})$. Therefore, we have defined a convex flag structure on $\P$.

The name ``sunflower'' comes from the following interpretation of $\P$: $P_a$ is the ``core'' of the sunflower $\P$, and the $P_{b_i}$'s are the ``petals'' which are glued together along edges $P_{c_i}$ and attached to $P_a$ at edges $E_i$. 

We may also allow $F_{i0}$ or $F_{i1}$ to degenerate into a single vertex; the resulting structure on $\P$ will also form a convex flag.
\end{example}

\begin{figure}
    \includegraphics[width=0.9\linewidth]{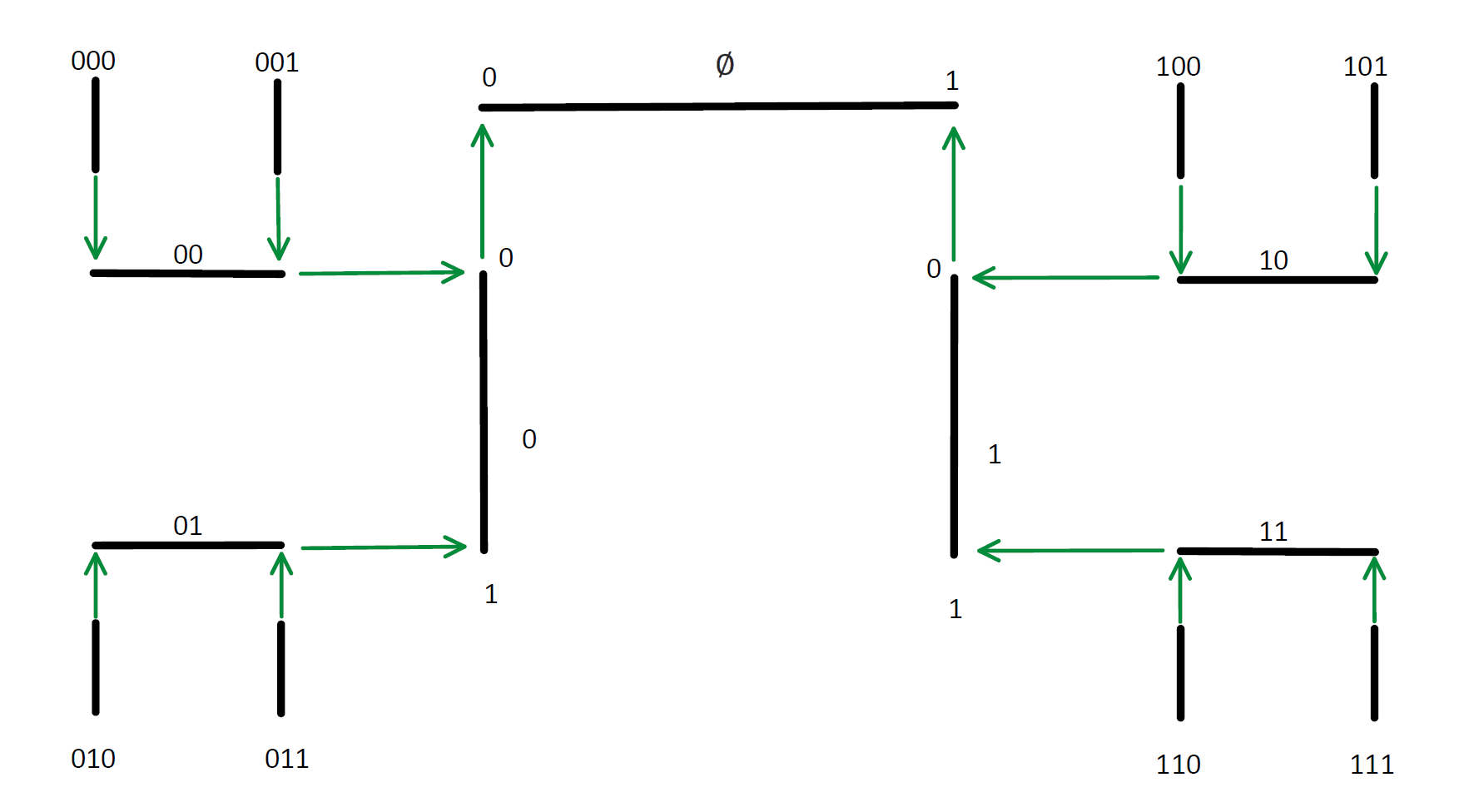}
    \caption{Binary tree for $d = 3$}
    \label{fig1}
\end{figure}
\begin{figure}
    \includegraphics[width=0.9\linewidth]{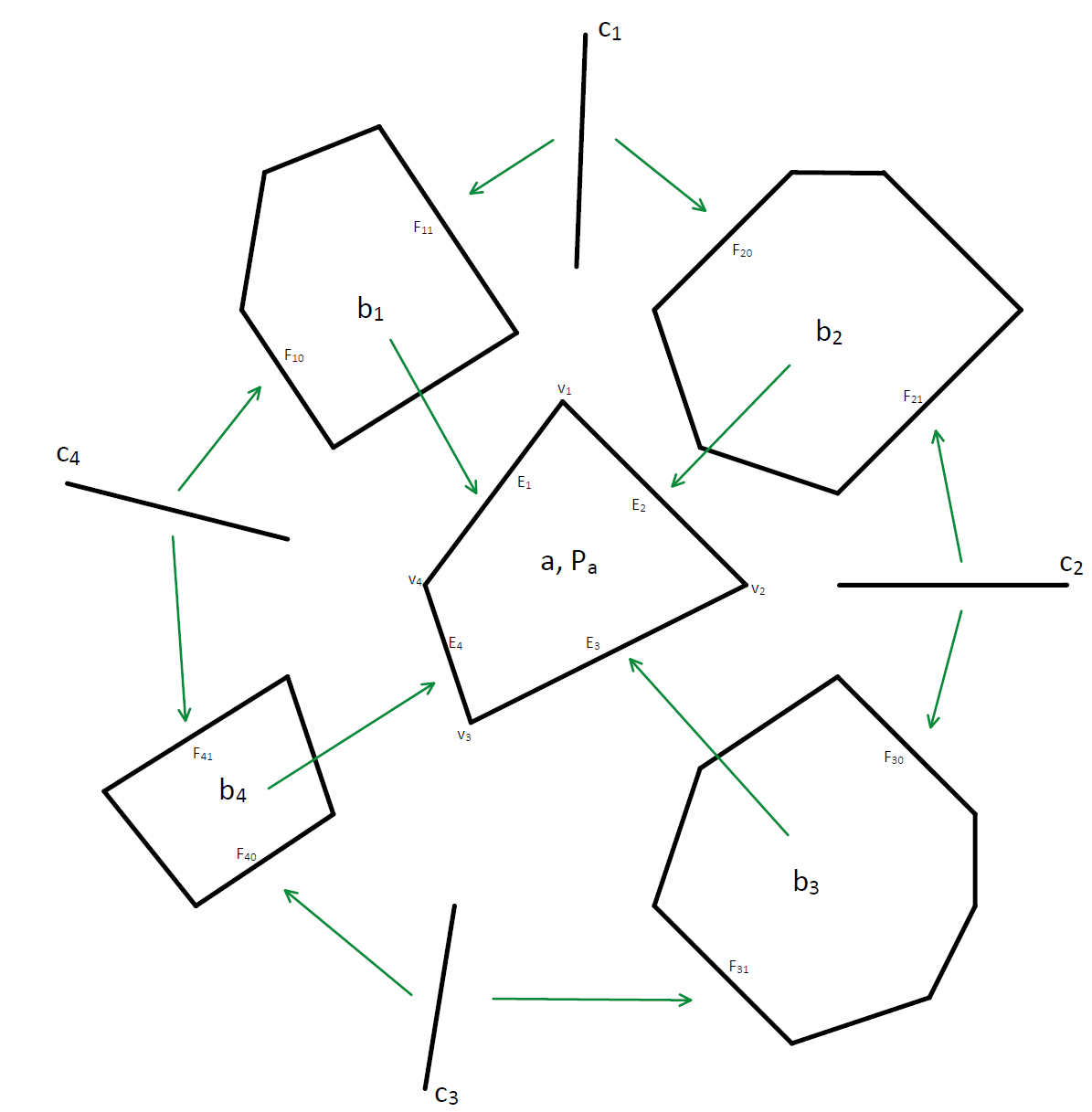}
    \caption{Sunflower for $n = 4$}
    \label{fig2}
\end{figure}

Now we translate the usual definitions of points and linear functions to this new setting. 

\begin{defi}[Linear functions]
A linear function $\xi$ on a convex flag $\P$ is a linear function $\xi_x: \A_x \rightarrow \R$ for some $x \in \P$. The {\em domain} $\D_\xi$ of $\xi$ is the set $\P_x := \{ y \in \P~|~y \preceq x\}$. For every point $q \in \A_y$, where $y \in \D_\xi$, we define $\xi_y(q) := \xi_x \psi_{x, y}(q)$.
\end{defi}

For $x \in \P$, we denote $\P^x :=\{ y \in \P~|~ x \preceq y\}$. Since $\P$ is a convex poset, for every $x_1, \ldots, x_n \in \P$ the set $\P^{x_1} \cap \ldots \cap \P^{x_n}$ also has the form $\P^x$ for some $x \in \P$. Namely, we take $x = \sup\{x_1, \ldots, x_n\}$. In particular, this intersection is non-empty.

\begin{defi}[Points]\label{pts}
A point $\bf q$ of a convex flag $\P$ is a point ${\bf q}_x \in P_x$ for some $x \in \P$ together with its images ${\bf q}_y = \psi_{y, x}{\bf q}_x$ for all $y \in \P^x$. We denote by $\D^{\bf q} := \P^x$ the {\em domain} of ${\bf q}$. The expression $\inf \D^{\bf q} := x$ denotes the minimum element $x$ of $\D^{\bf q}$.
\end{defi}

If for a linear function $\xi$ and a point $\bf q$ the sets $\D_\xi$ and $\D^{\bf q}$ intersect then we can define the value $\xi({\bf q})$ to be equal to $\xi_x({\bf q}_x)$ for some $x \in \D_\xi \cap \D^{\bf q}$. It follows from our definitions that this number does not actually depend on $x$.

For a set of points ${\bf q}_1, \ldots, {\bf q}_n$ of a convex flag $\P$ and non-negative coefficients $\alpha_1, \ldots, \alpha_n$ with sum 1, we define the convex combination $\alpha_1 {\bf q}_1 + \ldots + \alpha_n {\bf q}_n$ to be the unique point ${\bf q}$ of $\P$ such that $\D^{\bf q} = \bigcap_{i: \alpha_i > 0} \D^{{\bf q}_i}$ and for every $y \in \D^{\bf q}$ we have 
\begin{equation}\label{convc}
{\bf q}_y = \sum_{i:\, \alpha_i > 0} \alpha_i {\bf q}_{i, y}.
\end{equation}
For a set of points $S$ of a convex flag $\P$ we define the convex hull $\conv S$ to be the set of all points $\bf q$ which can be expressed as a convex combination of points from $S$.

Now we introduce the notion of lattices in convex flags.

\begin{defi}[Lattice]
\emph{A lattice $\Lambda$ in a convex flag} $\P$ is a collection of lattices $\Lambda_x \subset \A_x$ for all $x \in \P$ such that for every $x \preceq y$ we have $\psi_{y, x}\Lambda_x \subset \Lambda_y$.
\end{defi}

In what follows, we will usually work with a fixed convex flag $\P$ and a lattice $\Lambda$ on $\P$. For shorthand, we will refer to the pair consisting of a convex flag $\P$ and a lattice $\Lambda$ in $\P$ as a ``convex flag $(\P, \Lambda)$''.

A point $\bf q$ {\it belongs to the lattice} $\Lambda$ if ${\bf q}_x \in \Lambda_x$ for every $x \in \D^{\bf q}$. Equivalently, $\bf q$ belongs to $\Lambda$ if ${\bf q}_x \in \Lambda_x$, where $x = \inf \D^{{\bf q}}$. We denote the fact that ${\bf q}$ belongs to $\Lambda$ by the expression ${\bf q} \in \Lambda$ and we will call ${\bf q}$ {\it an integer point} of the convex flag $(\P, \Lambda)$. 

\subsection{Helly's theorem}

Fix a convex flag $(\P, \Lambda)$ with a lattice $\Lambda$. Let $\Omega$ be a set of points of the convex flag $\P$ which is closed under convex combinations (i.e. $\Omega = \conv\,\Omega$). Points ${\bf q} \in \Omega$ will be called {\it proper} points of the convex flag $(\P, \Lambda)$. Until the end of this section, we suppose that a set $\Omega$ of proper points for $(\P, \Lambda)$ is fixed, but we often omit it from the notation.

\begin{defi}[Helly constant]\label{arithm}
For a convex flag $(\P, \Lambda)$ with a set of proper points $\Omega$, define the Helly constant $L(\P, \Lambda, \Omega)$ as the maximum size $L$ of a collection of proper integer points ${\bf q}_1, \ldots, {\bf q}_L$ with the following property. Consider a convex combination
$$
{\bf q} = \sum_{i=1}^L \alpha_i {\bf q}_i,
$$
and suppose that ${\bf q} \in \Lambda$. Then $\alpha_i = 1$ for some $i$.
\end{defi}

We should point out that the last condition is not equivalent to saying that ${\bf q} = {\bf q}_i$ for some $i$. For brevity, we will usually omit $\Omega$ from the notation and write $L(\P, \Lambda)$ instead of $L(\P, \Lambda, \Omega)$.

\begin{example}
    Let $\P = \{x\}$ be a one-element poset, let $P_x \subset \Q^d$ be a polytope, let $\Lambda = \Z^d$, and let $\Omega$ be the set of all points of $P_x$. Then we have $L(\P, \Lambda) \le 2^d$. Indeed, if we have points $q_1, \ldots, q_{2^d+1} \in P_x \cap \Lambda$, then, by the pigeonhole principle there are indices $i \neq j$ such that $q_i = q_j \pmod 2$. Hence $q = \frac12 q_i + \frac12 q_j$ belongs to $\Lambda$, violating the condition in Definition \ref{arithm}. If $P_x$ contains the boolean cube $\{0, 1\}^d$, then the Helly constant of $(\P, \Lambda)$ equals $2^d$.
\end{example}

\begin{example}\label{exmpl}
    Let $P \subset \Q^d$ be a polytope and consider the corresponding convex flag $\P = \P(P)$. Let $\Omega$ be the set of points $\bf q$ of $\P$ such that $\inf \D^{\bf q}$ is the minimum face of $P$ which contains $\bf q$. Thus, the set of proper points $\Omega$ is in one-to-one correspondence with the set of points of $P$. For a face $\Gamma$ of $P$ let $\Lambda_\Gamma$ be the minimal lattice containing the vertices of $\Gamma$.
    
    Note that the integer points of the convex flag $(\P, \Lambda)$ are in bijection with integer points of the polytope $P$, according to Definition \ref{intpt}. Then if $P$ is a hollow polytope then $L(\P, \Lambda)$ is at most the number of vertices of $P$ which, in turn, is at most $L(d)$. 
    In fact, we show later that the inequality $L(\P, \Lambda) \le L(d)$ holds for every polytope $P \subset \Q^d$.
\end{example}

For the usual notion of convexity in $\Q^d$, we have the Hahn--Banach theorem: for every finite set $S$ and every $q \not \in \conv S$, there exists a linear function $\xi$ `separating' $q$ from $S$. However, this is no longer the case in the setting of convex flags. With this in mind, we define a second notion of convex hull:

\begin{defi}[Weak convex hull]\label{defwc}
For a set of points $S$ of $(\P, \Lambda)$, we define the {\it weak convex hull} $\wc(S)$ of $S$ to be the set of points ${\bf q}$ such that, for every linear function $\xi$ for which $\xi({\bf q})$ is defined, there is a point ${\bf s} \in S$ such that 
$$
\xi({\bf s}) \ge \xi({\bf q}).
$$
\end{defi}

Let ${\bf q}, {\bf q}'$ be a pair of points of a convex flag $(\P, \Lambda)$. We say that ${\bf q}$ is a projection of the point ${\bf q}'$ if $\D^{\bf q} \subset \D^{{\bf q}'}$ and ${\bf q}_x = {\bf q}'_x$ for every $x \in \D^{\bf q}$.

\begin{prop}\label{pf1}
For an arbitrary set of points $S$ of $(\P, \Lambda)$ and an arbitrary point ${\bf q}$,
we have ${\bf q} \in \wc(S)$ if and only if there exists ${\bf q}' \in \c(S)$ such that ${\bf q}$ is a projection of ${\bf q}'$.
\end{prop}

\begin{proof}
Take ${\bf q} \in \wc(S)$ and let $x = \inf \D^{\bf q}$. Let $X \subset P_x$ be the set of points ${\bf q}'_x \in \A_x$ over all ${\bf q}' \in \c(S)$ defined over $x$. Then $X$ is a convex subset of $P_x$. Note that if ${\bf q}_x \not \in X$ then by the usual Hahn--Banach theorem there is a linear function $\xi_x$ such that $\xi_x({\bf q}_x) > \xi_x(s)$ for every $s \in X$. Let $\xi$ be the unique linear function on $\P$ extending $\xi_x$ and note that we obtain a contradiction with ${\bf q} \in \wc(S)$. We conclude that ${\bf q}_x \in X$. Thus, there is some ${\bf q}' \in \c(S)$ such that ${\bf q}'_x = {\bf q}_x$. In other words, ${\bf q}$ is a projection of ${\bf q}'$.

The inverse implication can be checked directly. Indeed, replacing a point ${\bf q}' \in \c(S)$ by a projection ${\bf q}$ only decreases the number of conditions one has to satisfy.
\end{proof}

A set of points $S$ is {\it in weakly convex position} if no point of $S$ belongs to the weak convex hull of the other points. 


\begin{example}
    Let $P = [0, 1]$ and consider the convex flag $\P = \P(P)$. Let ${\bf 0}, {\bf 1}$ be the points of $\P$ such that $\D^{\bf 0} = \{[0, 1], \{0\}\}$, $\D^{\bf 1} = \{[0, 1], \{1\}\}$. Let ${\bf 0'}, {\bf 1'}$ be points such that ${\bf 0'}_P = {\bf 0}_P = 0$ and ${\bf 1'}_P = {\bf 1}_P = 1$ but $
    \D^{\bf 0'} = \D^{\bf 1'} = \{[0, 1]\}$. 
    So points ${\bf 0'}$ and ${\bf 1'}$ are projections of ${\bf 0}$ and ${\bf 1}$ respectively.

    Then the set $S = \{{\bf 0}, {\bf 1}, {\bf 0'}, {\bf 1'}\}$ is in convex position but not in weakly convex position. Indeed, the point ${\bf 0'}$ belongs to the weak convex hull of ${\bf 0}$ but cannot be expressed as a convex combination of ${\bf 0}$, ${\bf 1}$ and ${\bf 1'}$. We also have ${\bf 0'} = \frac{1}{2} {\bf 0'} + \frac{1}{2}{\bf 0}$. 
\end{example}

The following result explains why we call $L(\P, \Lambda)$ a Helly constant.

\begin{theorem}[Helly's Theorem for Convex Flags]\label{Helly}
Let $(\P, \Lambda)$ be a convex flag with a set of proper points $\Omega$ and denote by $L = L(\P, \Lambda, \Omega)$ its Helly constant.
Let $\mathcal F$ be a collection of sets $F \subset \Omega$ with the property that for every $F_1, \ldots, F_L \in \mathcal F$ there exists a proper integer point ${\bf q}$ such that ${\bf q} \in \bigcap_{i=1}^L \wc(F_i)$. Then there exists a proper integer point ${\bf q} \in \bigcap_{F \in \mathcal F} \wc(F)$.
\end{theorem}

Let us emphasize the fact that we cannot take ${\bf q}$ to be in the intersection of convex hulls $\c(F)$ but only weak convex hulls $\wc(F)$. On the other hand, we can still guarantee that ${\bf q}$ is a proper point. Recall that $\Omega$ is only assumed to be closed under convex combinations and it will not be the case in the applications that $\Omega = \wc(\Omega)$. 

\begin{proof}

As in the standard proof of Helly's Theorem, we proceed by induction on the size of the family $\F$. The base case $|\F| \le L$ follows from the assumption of the theorem. Let $\F = \{F_1, \ldots, F_n\}$ be a family of size $n > L$ satisfying the assumption of Theorem \ref{Helly}. By induction, for each $i = 1, \ldots, n$ there is a proper integer point ${\bf q}_i$ such that
$$
{\bf q}_i \in \bigcap_{j = 1,~ j \neq i}^n \wc(F_j).
$$

Now let $S = \{ {\bf q}_1, \ldots, {\bf q}_n\}$. We will show more generally that for every set $S$ of proper integer points of size $n$ there is a proper integer point ${\bf q}$ such that
\begin{equation}\label{scre}
    {\bf q} \in \bigcap_{i=1}^n \wc(S\setminus \{{\bf q}_i\}).
\end{equation}
Note that (\ref{scre}) implies that ${\bf q}$ belongs to the intersection of the weak convex hulls of all sets from $\F$. Thus, the proof of Theorem \ref{Helly} is reduced to showing (\ref{scre}). \footnote{The following argument is based on \cite[Proof of Proposition 4.2]{D}}

Suppose that (\ref{scre}) does not hold for some set $S = \{{\bf q}_1, \ldots, {\bf q}_n\}$. The fact that none of the points ${\bf q} = {\bf q}_i$ satisfy (\ref{scre}) implies that $S$ is in weakly convex position. Since there are only finitely many proper integer points in $\P$ we may also assume $S$ to be a minimal counterexample to (\ref{scre}), that is, the set $\wc(S)$ is minimal by inclusion among all possible counterexamples $S$.

Let $\mathcal Q$ be the set of all proper integer points ${\bf q}$ such that ${\bf q} = \sum_{i=1}^n \alpha_i {\bf q}_i$ for some coefficients $0\le \alpha_i< 1$ satisfying $\sum \alpha_i = 1$. 
Since $|S| = n > L = L(\P, \Lambda, \Omega)$, there is a convex combination ${\bf q} = \sum_{i=1}^n \alpha_i {\bf q}_i$ such that ${\bf q}$ is integral and all coefficients $\alpha_i$ are non-zero and less than $1$. This means that the set $\mathcal Q$ is non-empty.

\begin{claim}\label{projc}
If ${\bf q} \in \mathcal Q$ is a projection of ${\bf q}_j$ for some $j$ then ${\bf q}$ satisfies (\ref{scre}).
\end{claim}

\begin{proof}
Since ${\bf q}$ is a projection of ${\bf q}_j$, Proposition \ref{pf1} already implies that ${\bf q}$ belongs to all of the weak convex hulls in (\ref{scre}) except potentially $\wc(S\setminus \{{\bf q}_j\})$. 

Write ${\bf q} = \sum_{i=1}^n \alpha_i {\bf q}_i$ for some convex combination with $\alpha_i < 1$ and consider a point ${\bf q}'$ defined as follows:
$$
{\bf q}' = \sum_{i \neq j} \frac{\alpha_i}{1-\alpha_j} {\bf q}_i,
$$
Since $\alpha_j <1$, this is a well-defined convex combination, and we have ${\bf q} = \alpha_j {\bf q}_j + (1-\alpha_j) {\bf q}'$. This identity, together with the fact that ${\bf q}$ is a projection of ${\bf q}_j$, implies that ${\bf q}$ is a projection of ${\bf q}'$. But ${\bf q}' \in \c(S \setminus \{{\bf q}_j\})$, so (\ref{scre}) follows.
\end{proof}

By Claim \ref{projc} we may assume that $\mathcal Q$ does not contain points which are projections of some of ${\bf q}_i$-s. 
Now let ${\bf r} \in \mathcal Q$ be a point which belongs to the maximum number of sets $\wc(S \setminus \{{\bf q}_i\})$ and let $I \subset [n]$ denote the set of all such $i$. If $I = [n]$ then ${\bf q} = {\bf r}$ satisfies (\ref{scre}) so for the sake of contradiction we may assume that $I \neq [n]$.

\begin{claim}\label{cl53}
If for some $j$ we have ${\bf r} \not \in \wc(S \setminus \{{\bf q}_j\})$ then the set $S' = S \setminus \{{\bf q}_j\} \cup \{{\bf r}\}$ is in weakly convex position and $\wc(S')$ is a proper subset of $\wc(S)$.
\end{claim}

\begin{proof} Suppose that $S'$ is not weakly convex. Then for some $i \neq j$ we have 
$$
{\bf q}_i \in \wc(S \cup \{{\bf r}\} \setminus \{{\bf q}_j, {\bf q}_i\} ) \subset \wc(S \cup \{{\bf r}\} \setminus \{{\bf q}_i\}  ).
$$
Thus, by Proposition \ref{pf1}, there exists a point ${\bf q}'_i \in \conv(S \cup \{{\bf r}\} \setminus \{{\bf q}_i\}  )$ such that ${\bf q}_i$ is a projection of ${\bf q}'_i$. Therefore, there is a convex combination
$$
{\bf q}'_i = \sum_{t \neq i} \alpha_t {\bf q}_t + \beta {\bf r}
$$
for some non-negative $\alpha_t, \beta$. 
Note that $\beta > 0$ because the set $S$ is weakly convex. Since ${\bf r} \in \wc(S)$ there is ${\bf r}' \in \conv S$ such that ${\bf r}$ is a projection of ${\bf r}'$. Now consider the point
\begin{equation}\label{qeqn}
    {\bf q}''_i = \sum_{t \neq i} \alpha_t {\bf q}_t + \beta {\bf r}'.
\end{equation}
Then ${\bf q}_i$ is a projection of ${\bf q}_i''$ and the point ${\bf q}_i''$ lies in $\conv(S)$. Since the set $S$ is weakly convex there exists a linear function $\xi$ such that the value $\xi({\bf q}_i)$ is defined and $\xi({\bf q}_t) < \xi({\bf q}_i)$ holds for all $t \neq i$ for which this is defined. 

Since $\xi$ is defined on ${\bf q}_i$, it is also defined on ${\bf q}_i''$ and thus on ${\bf r}'$ and all ${\bf q}_t$ such that $\alpha_t \neq 0$. 
Since ${\bf r}' \in \conv(S)$, we have $\xi({\bf r}') \le \xi({\bf q}_i)$ with equality if and only if ${\bf r}' = {\bf q}_i$ (note that $\xi$ is defined on all elements of the convex combination expressing ${\bf r}'$). In the latter case we get that ${\bf r}$ is a projection of ${\bf q}_i$, contradicting our assumption that no element of $\mathcal Q$ is a projection of an element of $S$.
On the other hand, if $\xi({\bf r'}) < \xi({\bf q}_i)$, then (\ref{qeqn}) gives
\[
 \xi({\bf q}_i) = \xi({\bf q}_i'') = \sum_{t\neq i,~\alpha_t > 0} \alpha_t \xi({\bf q}_t) + \beta \xi({\bf r}') < \xi({\bf q}_i),
\]
a contradiction. 

Now we show that $\wc(S \setminus \{{\bf q}_j\} \cup \{{\bf r}\})$ is strictly contained in $\wc(S)$. In fact, ${\bf q}_j \not \in \wc(S \setminus \{{\bf q}_j\} \cup \{{\bf r}\})$ holds. Indeed, this follows from the argument above applied to $j = i$.
\end{proof}

For such a $j$, set $S' = S \setminus \{{\bf q}_j\} \cup \{{\bf r}\}$. We conclude that $S'$ is a weakly convex set of size $n$ with $\wc(S')$ strictly contained in $\wc(S)$. 
Thus, by the minimality of $S$ there exists a proper integer point ${\bf s}$ which belongs to the intersection:
\begin{equation}\label{inter}
{\bf s} \in \wc(S \setminus \{{\bf q}_j\}) \cap \bigcap_{i \neq  j} \wc(S \cup \{{\bf r}\} \setminus \{{\bf q}_j, {\bf q}_i\}).
\end{equation}
On the other hand, if $i \in I$ then ${\bf r} \in \wc(S \setminus \{{\bf q}_i\})$ and so
$$
{\bf s} \in \wc(S \cup \{{\bf r}\} \setminus \{{\bf q}_j, {\bf q}_i\}) \subset  \wc(S \setminus \{{\bf q}_i\}).
$$
We conclude that the point ${\bf s}$ belongs to $\wc(S \setminus \{{\bf q}_i\})$ for all $i \in I \cup \{j\}$, contradicting the choice of ${\bf r}$. Therefore, our assumption $I \neq [n]$ is false and there exists a point satisfying (\ref{scre}). This completes the proof of Helly's theorem.
\end{proof}

Recall that the usual Helly's theorem in $\R^d$ implies a centerpoint theorem. The convex flag analogue of this result will play a crucial role in the proof of Theorem \ref{main}.

\begin{cor}[Centerpoint Theorem]\label{central}
Let $(\P, \Lambda)$ be a convex flag with a set of proper points $\Omega$. Let $\{{\bf q}_1, \ldots, {\bf q}_n\}$ be a set of pairwise distinct proper integer points of $\P$ and let $\omega_1, \ldots, \omega_n$ be non-negative weights with $\sum \omega_i = \omega$. 
Then there exists a proper integer point $\bf q$ of $\P$ such that for every linear function $\xi$ with $\D_\xi \cap \D^{\bf q} \neq \emptyset$ we have
\begin{equation}\label{ceq}
\sum_{i: ~\xi({\bf q}_i) \ge \xi ({\bf q})} \omega_i \ge \frac{\omega}{L(\P, \Lambda, \Omega)},
\end{equation}
where the sum is taken over all $i$ such that $\D_{\xi} \cap \D^{{\bf q}_i}\neq \emptyset$ and $\xi ({\bf q}_i) \ge \xi ({\bf q})$.
\end{cor}

\begin{proof}
For a linear function $\xi$ such that $\D_\xi \cap \D^{\bf q} \neq \emptyset$ and a real number $\alpha$, let $S_{\xi, \alpha} \subset \{{\bf q}_1, \ldots, {\bf q}_n\}$ be the set of points ${\bf q}_i$ such that $\xi ({\bf q}_i) \le \alpha$ or the value $\xi ({\bf q}_i)$ is not defined (i.e. $\D_\xi \cap \D^{{\bf q}_i} = \emptyset$). Let $\F$ be the family of all sets $S_{\xi, \alpha}$ such that
\begin{equation}\label{ineq}
\sum_{{\bf q}_i \in S_{\xi, \alpha}} \omega_i > \omega \frac{L(\P, \Lambda)-1}{L(\P, \Lambda)}.
\end{equation}
By the pigeonhole principle, every collection of $L(\P, \Lambda)$ sets from $\F$ shares a common element ${\bf q}_i$ for some $i$, which is in particular a proper integer point of $\P$. Thus, by Theorem \ref{Helly}, there exists a proper integer point ${\bf q}$ which lies in the intersection of the weak convex hulls of all sets from $\F$. Let us check that the conclusion of Corollary \ref{central} holds for this point.
Let $\xi$ be a linear function satisfying $\D_{\xi} \cap \D^{{\bf q}}\neq \emptyset$. For every $\varepsilon > 0$, let $\alpha = \xi({\bf q}) -\varepsilon$. Then, by Definition \ref{defwc}, ${\bf q}$ does not belong to $\wc(S_{\xi, \alpha})$ since it is separated from this set by the linear function $\xi$. Thus, the set $S_{\xi, \alpha}$ does not belong to the family $\mathcal F$. This means that (\ref{ineq}) does not hold and so \begin{equation}\label{eqeps}
    \sum_{i:~ {\bf q}_i \not \in S_{\xi, \alpha}} \omega_i \ge \frac{\omega}{L(\P, \Lambda)}.
\end{equation}
For $\varepsilon$ small enough, (\ref{eqeps}) coincides with (\ref{ceq}) and so we are done.
\end{proof}

\subsection{Application to polytopes: proof of Theorem \ref{thm:cpt}}\label{polyhelly}

From Theorem \ref{central} we can derive a centerpoint theorem for integer points of polytopes mentioned in Section \ref{over}. For convenience, we restate the result here.

\maintheorem*


\begin{proof}
Denote by $S$ the support of $w$.
Let $\P = \P(P)$ be the convex flag corresponding to the polytope $P$ and let $\Lambda$ be a lattice on $\P$ defined as follows: for a face $\Gamma \subset P$ we let $\Lambda_\Gamma \subset \A_\Gamma$ be the minimal lattice containing the set $S \cap \Gamma$. 
Let $\Omega$ be the set of points ${\bf q}$ of $\P$ such that $\inf \D^{\bf q}$ is the minimum face of $P$ containing ${\bf q}$ (cf. Example \ref{exmpl}). 
Then the set of proper points of $\P$ is in one-to-one correspondence with the points of $P$. In particular, if ${\bf q}$ is a proper integer point of $(\P, \Lambda)$ and $q$ is the corresponding point in $P$, then $q$ belongs to the minimal lattice $\Lambda_\Gamma$, where $\Gamma$ is the minimal face containing $q$.

Thus, by Corollary \ref{central}, the statement of the theorem follows from the upper bound $L(\P, \Lambda, \Omega) \le L(d)$ on the Helly constant of $\P$.
We check this inequality using Definition \ref{arithm}.
For $n > L(d)$ let ${\bf q}_1, \ldots, {\bf q}_n$ be proper integer points of $(\P, \Lambda)$. 
If ${\bf q}_i = {\bf q}_j$ for some $i\neq j$ then ${\bf q} = \frac{1}{2}{\bf q}_i + \frac{1}{2} {\bf q}_j$ is an integer point and we are done. So we may assume that all points ${\bf q}_i$ are pairwise distinct. 

For each $i$ let $q_i$ be the point of $P$ corresponding to ${\bf q}_i$. If the set $\{q_1, \ldots, q_n\}$ is not in convex position, then there exists a convex combination of the form
$$
q_i = \sum_{j \neq i} \alpha_j q_j,
$$
for some $0\le \alpha_j < 1$. Since ${\bf q}_i$ is an integer point, this gives the desired convex combination.

Now assume that $q_1, \ldots, q_n$ are in convex position. Since the polytope  $Q = \conv\{q_1, \ldots, q_n\}$ has $n > L(d)$ vertices, there is an integer point $q \in Q$ which is not a vertex of $Q$. Write $q = \sum \alpha_i q_i$, with $0\le \alpha_i<1$ and $\sum \alpha_i = 1$, and let ${\bf q}$ be the corresponding proper point of the convex flag $\P$. Clearly, we have ${\bf q} = \sum \alpha_i {\bf q}_i$. We want to show that ${\bf q}$ is an integer point of $\P$.

Let $\Gamma$ and $\Gamma'$ be the minimal faces of $P$ and $Q$, respectively, containing the point $q$. In particular, we have $\D^{\bf q} = \P^\Gamma$. Then $\Gamma' \subset \Gamma$ and $q$ belongs to the minimal lattice $\Lambda'$ containing the set $S' = \{q_i~|~q_i \in \Gamma'\}$. The set $S'$ is contained in the lattice $\Lambda_\Gamma$ and so we have ${\bf q}_\Gamma \in \Lambda' \subset \Lambda_\Gamma$.
Therefore, the point ${\bf q}$ belongs to the lattice $\Lambda$ and we conclude that $L(\P, \Lambda, \Omega) \le L(d)$ as desired.
\end{proof}

\section{Flag Decomposition}\label{sfdl}

\subsection{The statement}\label{fdlst}

In this section we formulate and prove the Flag Decomposition Lemma, a structural result about arbitrary subsets of $\FF_p^d$. This result will play a crucial role in the proof of Theorem \ref{main}. To state it, we need some additional notation and terminology.

Recall that a convex flag $(\P, \Lambda)$ is a collection of data consisting of spaces $\A_x$, convex polytopes $P_x \subset \A_x$, lattices $\Lambda_x \subset \A_x$ and connecting polytope maps $\psi_{y, x}: \A_x \rightarrow \A_y$ for all $x \preceq y$.

Let $V= \FF_p^d$ be a vector space over $\FF_p$ for some prime $p>2$. Let $V^*$ denote the space of linear functions on $V$, including functions with constant term.
For a function $f: V \rightarrow \R_{\ge 0}$ and for a subset $S \subset V$ we denote $f(S) := \sum_{v \in S} f(v)$. 
For a linear function $\xi \in V^*$ on $V$ a $K$-slab $H(\xi, K)$ is the set of points $v \in V$ such that $\xi(v) \in [-K, K]$. 

\begin{defi}[Thinness and thickness]\label{tt}
A function $f: V \rightarrow \R_{\ge 0}$ is called $(K, \varepsilon)$-thin along a linear function $\xi \in V^*$ if 
$$
f(H(\xi, K)) \ge (1 - \varepsilon) f(V),
$$
and $f$ is called $(K, \varepsilon)$-thick along $\xi$ otherwise.
\end{defi}

The next definition relates convex flags with vector spaces over $\FF_p$. For a lattice $\Lambda$ and a prime $p$ we denote by $\Lambda / p\Lambda$ the set of equivalence classes of points of $\Lambda$ with respect to the relation $x \sim_p y$ if and only if $\frac{x + (p-1)y}{p} \in \Lambda$. Note that $\Lambda / p\Lambda$ has size precisely $p^{\dim \Lambda}$ and can be identified with an affine space over $\FF_p$.

\begin{defi}[$\FF_p$-Representation]
Let $(\P, \Lambda)$ be a convex flag and $V$ be a vector space over $\FF_p$. A {\em representation} $\varphi$ of the flag $(\P, \Lambda)$ in $V$ is a collection of affine subspaces $V_x \subset V$, for $x \in \P$, and affine surjective maps $\varphi_x: V_x \rightarrow \Lambda_x / p\Lambda_x$ such that for every $x \preceq y$ we have $V_x \subset V_y$ and $\varphi_y|_{V_x} = \psi_{y, x} \varphi_x$ on $V_x$.
\end{defi}

For brevity, we write $\varphi: V \rightarrow (\P, \Lambda)$ to mean that $\varphi$ is a representation of $(\P, \Lambda)$ in $V$. The corresponding affine subspaces and maps will always be denoted by $V_x$ and $\varphi_x$, possibly with some superscripts when we work with multiple representations at once.

For instance, if $(\P,\Lambda)$ is given by Example \ref{exmpl} with a polytope $P$ whose vertices are in the integer lattice $\Z^d$, then we can take $V = \FF_p^d$ and for $x\in \P$ let $V_x$ be the span of vertices of $P_x$ taken modulo $p$. Then, provided that $p$ is $P$-good, we can take $\varphi_x: V_x \to \Lambda_x/p\Lambda_x$ to be the identity map. 

If $(\P,\Lambda)$ is the binary tree of depth $d$ from Example \ref{ex1}, then we can construct many representations $V\to (\P,\Lambda)$ inductively. If $d=0$, then every surjective map $V\to \Z/p\Z$ is an $\FF_p$-representation in $(\{\emptyset\}, \Z)$. Now suppose that $d\ge 1$. We can write $\P = \{\emptyset\} \sqcup \P_0\sqcup \P_1$, where $\P_i$ is the set of strings starting with $i$.
We can view each $\P_i$ as a convex poset with induced lattice $\Lambda_i$. Let $\varphi_\emptyset: V \to \Z/p\Z$ be surjective, and let $V_i \subset \varphi_\emptyset^{-1}(i)$ be an arbitrary affine subspace for $i=0,1$. Let $\varphi_i:V_i \to (\P_i, \Lambda_i)$ be inductively constructed $\FF_p$-representations of depth $d-1$ dyadic trees. Then the maps $\varphi_\emptyset,\varphi_0,\varphi_1$ combine into a representation $\varphi:V\to (\P,\Lambda)$.

An affine basis of a lattice $\Lambda \subset \Q^d$ consists of an origin point $o \in \Lambda$ and a set of linearly independent vectors $e_1, \ldots, e_l$ such that $\Lambda = \langle o + \sum_i \lambda_i e_i~|~ \lambda_i \in \Z \rangle$.
Given an affine basis $E$ of a lattice $\Lambda \subset \Q^d$, define a lifting map $\gamma = \gamma_E: \Lambda / p\Lambda \rightarrow \Lambda$ as follows: for every equivalence class $[v] \in \Lambda/p\Lambda$, $\gamma_E$ sends $[v]$ to the unique vector $v'$ whose coordinates in the basis $E$ belong to the set $\{-\frac{p-1}{2}, \ldots, \frac{p-1}{2}\}$. If $E$ is an affine basis of $\Lambda$ and $q \in \Lambda$, then we denote by $\|q\|_{\infty, E}$ the largest absolute value among the coordinates of $q$ in the basis $E$. We extend these definitions to the setting of convex flags.

\begin{defi}[Basis]\label{Kb}
Let $\Lambda$ be a lattice on a convex flag $\P$. A basis $E$ of the lattice $\Lambda$ is a collection of affine bases $E_x$ of $\Lambda_x$ for $x \in \P$. Let $K: \P \rightarrow \N$ be a decreasing function, that is, for every $x \prec y$ we have $K(x) \ge K(y)$. We say that $E$ is $K$-bounded if for every $x \in \P$ and $q \in P_x \cap \Lambda_x$ we have $\|q\|_{\infty, E_x} \le K(x)$.
\end{defi}

\begin{defi}[Flag decomposition]\label{flagd}
Let $f: V\rightarrow \N$ be a function from an affine space over $\FF_p$ to the set of non-negative integers.
A {\em flag decomposition $\Phi$ of $f$} is the following collection of data:
\begin{itemize}
    \item A convex flag $(\P, \Lambda)$ and a representation $\varphi: V \rightarrow (\P, \Lambda)$,
    \item A collection of functions $f_x: V_x \rightarrow \N$, $x \in \P$, such that, extending each $f_x$ by zero outside $V_x$, $f^\Phi := \sum_{x \in \P} f_x$ satisfies $f^\Phi \le f$ pointwise.
    \item A basis $E$ of $\Lambda$ with the following property. For $x \in \P$ let $\hat f_x: \Lambda_x \rightarrow \N$ be the function defined for $q \in \Lambda_x$ as
    \[
    \hat f_x(q) = \begin{cases}
        \sum_{y \preceq x} f_y(\varphi^{-1}_x [q]), \quad \text{if }\|q\|_{\infty, E_x} \le \frac{p-1}{2},\\
        0, \quad \text{otherwise},
    \end{cases}
    \]
    where $[q] \in \Lambda_x / p\Lambda_x$ denotes the class of $q$.
    In this notation, we require that $P_x$ is the convex hull of all points $q \in \Lambda_x$ such that $\hat f_x(q) \neq 0$.
\end{itemize}
\end{defi}

For a decreasing function $K: \P \rightarrow \N$ we call
a flag decomposition {\em $K$-bounded} if the corresponding basis $E$ is $K$-bounded.

Roughly speaking, a flag decomposition $\Phi$ of $f$ is a way to express an arbitrary function $f: V \rightarrow \N$ as a sum of functions $f_x$, $x \in \P$, and an `error' term $(f-f^\Phi)$, which we want to be small. The functions $f_x$ are equipped with additional structure: $f_x$ is supported on a subspace $V_x \subset V$. After applying a surjective map $V_x \rightarrow \Lambda_x / p \Lambda_x$ and lifting to the integer lattice $\Lambda_x$, the support of $f_x$ (together with all $f_y$, $y \preceq x$) defines a convex polytope $P_x$. We want these polytopes to have bounded size, hence the notion of a $K$-bounded decomposition.

For $x \in \P$, let $f_{\preceq x}: V_x \rightarrow \N$ denote the sum $f_{\preceq x} = \sum_{y \preceq x} f_y$. In particular, we have $f^\Phi = f_{\preceq \sup \P}$.
For an integer point ${\bf q}$ of $\P$ we define $\hat f({\bf q})$ to be equal to $\hat f_x({\bf q}_x)$ where $x = \inf \D^{\bf q}$. For a subset $S \subset \Lambda_x$ we denote by $\hat f_x(S)$ the sum $\sum_{q \in S} \hat f_x(q)$. 

\begin{defi}[Minimal flag decomposition]
We say that a flag decomposition $\Phi$ of $f$ is {\em minimal} if for every $x \in \P$ the affine space $V_x$ is spanned by the support of $f_{\preceq x}$ and $\Lambda_x$ is the minimal lattice containing the support of $\hat f_x$.    
\end{defi}

In Section \ref{s2} we introduced a notion of proper points of a convex flag. Given a flag decomposition, there is a natural way to define a set of proper points.

\begin{defi}[Proper points]\label{prop}
Let $\Phi$ be a flag decomposition of a function $f$. Let $\Omega_0$ be the set of points ${\bf q}$ of $\P$ such that $\hat f({\bf q}) > 0$ and let $\Omega = \c (\Omega_0)$. The set $\Omega$ is called {\em the set of proper points of the flag decomposition $\Phi$}.
\end{defi}


Let $\P$ be a convex flag with a set of proper points $\Omega$. 
Let $x \in \P$ and $\Gamma$ be a face of $P_x$. Define an element $x_\Gamma \in \P$ as follows:
\begin{equation}\label{formula}
    x_{\Gamma} := \sup_{{\bf q}: ~{\bf q}_x \in \Gamma} \inf \D^{\bf q},
\end{equation}
where the supremum is taken over all proper points ${\bf q}$ which are defined over $x$ such that ${\bf q}_x \in \Gamma$. In particular, we have $x_\Gamma \preceq x$.

\begin{defi}[Realized face]
    In the setup above, we say that a face $\Gamma \subset P_x$ is {\em realized} if $ \psi_{x, x_\Gamma} (P_{x_\Gamma}) \subset \Gamma$.
\end{defi}

Essentially, this definition means that the face $\Gamma$ is properly represented by the convex flag $(\P, \Lambda)$. For example, we could start with a trivial convex flag $(\P,\Lambda)$ where $\P = \{x_0\}$, $\Lambda=\Z^d$, and $P_{x_0} \subset \Q^d$ is an arbitrary polytope. Then, for every proper face $\Gamma$ of $P_{x_0}$, we have $x_\Gamma = x_0$ and so $\psi_{x_0, x_\Gamma}(P_{x_\Gamma}) = P_{x_0} \not\subset \Gamma$. Thus, the convex flag `does not know' about the faces of $P_{x_0}$. In the following sections, we will introduce an operation on convex flags which adds a new element $x_\Gamma$ to the poset $\P$ and makes $\Gamma$ realized in the new convex flag. 

In Example \ref{exmpl} where $(\P,\Lambda)$ is constructed from a convex polytope $P$, all faces $\Gamma$ of $P$ are realized by design (and moreover we even have a stronger property $P_{x_\Gamma} = \Gamma$). In applications, we will be able to guarantee that all `relevant' faces that we consider are realized. 

Note that the polytope $P_{x_\Gamma}$ corresponding to a face $\Gamma\subset P_x$ need not coincide with $\Gamma$. Namely, consider Example \ref{ex1} with the set of proper points defined as the convex hull of all vertices of polytopes $P_s$ with $|s| = d$. 
Let $\Gamma_0 = \{0\}, \Gamma_1 = \{1\}$ be the faces of $P_{s} = [0,1]$ for some string $s \in \P$ with $|s|\le d-1$. Then we have $x_{\Gamma_0} = s0$ and $x_{\Gamma_1} = s1$ and both faces $\Gamma_0$ and $\Gamma_1$ are realized. 

Similarly, consider Example \ref{ex2} with proper points defined in an analogous way. Then all faces of $P_a$ are realized in $\P$ using elements $b_1, \ldots,b_n$ for edges $E_1, \ldots, E_n$ and elements $c_1, \ldots,c_n$ for vertices $v_1, \ldots, v_n$, respectively. On the other hand, only edges of $P_{b_i}$ are realized by $P_{c_i}, P_{c_{i-1}}$, and none of the vertices of $P_{c_i}$ is realized.

\begin{defi}[Reduced element]
    In the setup above,  we say that an element $x \in \P$ is {\em reduced} if there exists a proper point ${\bf q}$ such that $\inf \D^{\bf q} = x$.
\end{defi}

It is easy to see that $x$ is reduced if and only if $x_{P_x} = x$.
We say that a convex flag $(\P, \Omega)$ is {\em reduced} if every element $x\in \P$ is reduced. Similarly, a flag decomposition $\Phi$ is reduced if the corresponding convex flag $(\P,\Omega)$ is reduced.
Since we primarily care about proper points of a convex flag, non-reduced elements do not give us any useful information. In the following section we show that one can simply remove these elements from $\P$ and maintain all desirable properties. 

\begin{defi}[Large face]\label{largef}
Let $\Phi$ be a flag decomposition, $\varepsilon>0$ and $x \in \P$. A face $\Gamma \subset P_x$ is called $\varepsilon$-large if $\hat f_x(\Gamma \cap \Lambda_x) \ge \varepsilon f^\Phi(V)$ and for every proper face $\Gamma' \subset \Gamma$ we have $\hat f_x(\Gamma' \cap \Lambda_x) \le (1-\varepsilon) \hat f_x(\Gamma \cap \Lambda_x)$. 

An element $x \in \P$ is called $\varepsilon$-large if $\hat f_x(P_x \cap \Lambda_x) \ge \varepsilon f^\Phi(V)$. Note that the fact that $x$ is $\varepsilon$-large does not imply that $P_x$ is an $\varepsilon$-large face.
\end{defi}

\begin{defi}[Gap function]
    For a flag decomposition $\Phi$ and a reduced element $x \in \P$ define the {\em gap function} $G(x)$ of $x$ to be the minimum of non-zero values of the function $\hat f_x(q)$ over all $q \in \Lambda_x$. 
\end{defi}

With these definitions we can now state the properties which we would like our flag decomposition to satisfy. 

\begin{defi}[Complete element]
Let $\Phi$ be a flag decomposition of $f$, $\delta > 0$, $T: \P \rightarrow \N$ and let $x \in \P$. Then $x$ is called $(T, \delta)$-{\em complete} if for every linear function $\xi \in V_x^*$ that is not constant on fibers of $\varphi_x$, the function $f_{\preceq x}$ is $(T(x), \delta)$-thick along $\xi$.
\end{defi}

To illustrate this definition, suppose that we have some element $x \in \P$ which is not $(T,\delta)$-complete for some bounded number $T$ and fairly small $\delta>0$. Furthermore, let us suppose that $(\P,\Lambda)$ is $K$-bounded for some bounded function $K$. 
This means that the support of the function $f_{\preceq x}$ is essentially contained in a strip $H(\xi,T(x)) \subset V_x$. Consider the combined map $\tilde \varphi_x=(\varphi_x, \xi): V_x \to \Lambda_x/p\Lambda_x \times \Z/p\Z$. Then, after small pruning, the support of $f_{\preceq x}$ is mapped by $\tilde \varphi_x$ to a bounded box in $\Lambda_x \times \Z$ of size at most $\max(K(x), T(x))$. We can then add a new element $\tilde x$ to $\P$ and define a new polytope $P_{\tilde x}$ by taking the convex hull of this image. In other words, we constructed a larger flag decomposition which takes into account the information that $\operatorname{supp}f_{\preceq x}$ lies in the strip $H(\xi,T(x))$. (This construction will be made precise in the following sections.)

So an element $x\in \P$ is complete if the $\FF_p$-representation $V\to (\P,\Lambda)$ `captures' the bounded part of the support of $f_{\preceq x}$. 

\begin{defi}[Complete decomposition] \label{cmp}
Let $\varepsilon, \delta > 0$, let $\Phi$ be a flag decomposition and let $T:\P\rightarrow \N$ be a function. A flag decomposition $\Phi$ is called $(T, \varepsilon, \delta)$-{\rm complete} if the following conditions hold:
\begin{itemize}
    \item $\Phi$ is minimal and reduced,
    \item every $\varepsilon$-large element $x \in \P$ is $(T(x), \delta)$-complete,
    \item for every $x\in \P$, every $\varepsilon$-large face $\Gamma \subset P_x$ is realized.
\end{itemize}
\end{defi}

Now we are ready to formulate the main result of this section. We say that a function $g:\N\rightarrow \N$ is {\em growing} if $g(n) > n$ for all $n\in \N$.

\begin{theorem}[Flag Decomposition Lemma]\label{fdl}
Fix $d \ge 0$, let $\varepsilon > 0$ and let $g: \N \rightarrow \N$ be a growing function. Then there are constants $p_0(d, \varepsilon, g)$ and $\delta \gg_{d, \varepsilon} 1$ such that the following holds for all primes $p>p_0(d, \varepsilon, g)$.

Let $V = \FF_p^d$ and let $f: V \rightarrow \N$ be an arbitrary function. Then there exists a flag decomposition $\Phi$ of $f$ and functions $T, K: \P \rightarrow \N$, with $K$ decreasing, such that the following holds:
\begin{itemize}
    \item $\Phi$ is $K$-bounded and $(T, \varepsilon, \delta)$-complete,
    \item for all $x \in \P$ we have $T(x) \ge g(K(x))$, $K(x) \ll_{g, d, \varepsilon} 1$ and $G(x) \ge \delta^3 K(x)^{-d} f(V)$ (where $G(x)$ denotes the gap function),
    \item we have $f^{\Phi}(\FF_p^d) \ge (1-\varepsilon) f(V)$ and $|\P| \ll_{d, \varepsilon} 1$.
\end{itemize}
\end{theorem}

In Sections \ref{clp} and \ref{fdl2}, we introduce several operations on flag decompositions. We apply them in Section \ref{s3} to prove Theorem \ref{fdl}. Later in the paper, we will apply Theorem \ref{fdl} as a black-box, so the content of Sections \ref{clp}-\ref{s3} will not be needed outside Section \ref{sfdl}.

\subsection{Clean-up lemmas}\label{clp}

This section contains simple operations on flag decompositions which allow us to make them reduced, make the gap function separated from 0 and optimize the lattice $\Lambda$. The completeness properties of decompositions will be preserved under these operations. 

In what follows, we will work with multiple flag decompositions of the same function at once. To avoid notational clutter, we will use the following convention: all objects related to a flag decomposition will be denoted by the same letters, and superscripts will be added to distinguish these objects between different decompositions. For instance, the convex flag corresponding to a flag decomposition $\Phi'$ will be denoted by $(\P',\Lambda')$, and similarly for other objects.

Recall that we fix $V = \FF_p^d$ and $f: V\to \R_{\ge 0}$. 
For convenience let us recall that a flag decomposition $\Phi$ of $f$ consists of the following data:
\begin{itemize}
    \item A convex poset $\P$, a set $\Omega$ of proper points of $\P$, functions $T, K: \P \rightarrow \N$, 
    \item For $y \preceq x$, we have a map $\psi_{x, y}: \A_y \rightarrow \A_x$,
    \item For $x \in \P$ we have a space $\A_x$, a polytope $P_x \subset \A_x$, a lattice $\Lambda_x \subset \A_x$, an affine basis $E_x$ of $\Lambda_x$, 
    \item For $x\in \P$ we have a subspace $V_x \subset V$, a surjective map $\varphi_x: V_x \rightarrow \Lambda_x /p\Lambda_x$ and a function $f_x: V_x \rightarrow \N$.
\end{itemize}

For a flag decomposition $\Phi$ of $f$, let $\P^{\text{red}}$ be the set of reduced elements of $\P$, that is, the elements $x\in \P$ for which there exists a proper point ${\bf q}$ with $\inf \D^{\bf q} =x$. Equivalently, the set $\mathcal Q$ of all points ${\bf q}$ supported on $x$ satisfies $\bigcap_{{\bf q}\in \mathcal Q} \D^{\bf q} = \P^x$.


\begin{claim}
The poset $\P^{\text{red}} \subset \P$ is convex.
\end{claim}

\begin{proof}
We need to check that for every $x, y \in \P^{\text{red}}$ the set $\{x, y\}$ has a supremum in $\P^{\text{red}}$. Let $z = \sup(x, y) \in \P$ and define $z' = z_{P_z}$, that is,
\begin{equation*}
    z' = \sup_{{\bf q}:~ z \in \D^{\bf q}} \inf \D^{\bf q},
\end{equation*}
where the supremum is taken over all proper points $\bf q$ which are defined on the element $z$. Any point $\bf q$ which is supported on $x$ or $y$ is also supported on $z$ and so we have $x_{P_x}, y_{P_y} \preceq z'$. Since $x = x_{P_x}$ and $y = y_{P_y}$ this implies that $z'$ is an upper bound for $\{x, y\}$. But $z'\preceq z$ and so we must have $z' = z$ and hence $z \in \P^{\text{red}}$. This shows that $\P^{\text{red}}$ is a convex poset.
\end{proof}

Now we can restrict all the data of the flag decomposition $\Phi$ to the convex subposet $\P^{\text{red}} \subset \P$. We claim that the resulting collection of data $\Phi^{\text{red}}$ is a flag decomposition of $f$. The transition from $\Phi$ to $\Phi^{\text{red}}$ does not affect the numerical parameters of the decomposition but makes it reduced.

\begin{lemma}[Reduced decomposition]\label{straig}
In the notation above, $\Phi^{\text{red}}$ is a flag decomposition of $f$ such that:
\begin{itemize}
    \item $\Phi^{\text{red}}$ is reduced and $K$-bounded, and $f^{\Phi^{\text{red}}} = f^{\Phi}$,
    \item If $x \in \P^{\text{red}}$ is a $(T, \delta)$-complete element in $\Phi$ then $x$ is $(T, \delta)$-complete in $\Phi^{\text{red}}$,
    \item Let $x \in \P^{\text{red}}$ and let $\Gamma \subset P_x$ be a face. Then $x$ or $\Gamma$ is $\varepsilon$-large in $\Phi$ if and only if it is $\varepsilon$-large in $\Phi^{\text{red}}$. If $\Gamma$ is realized in $\Phi$, then $\Gamma$ is realized in $\Phi^{\text{red}}$,
    \item For every $x \in \P^{\text{red}}$ we have $G^{\text{red}}(x) = G(x)$.
\end{itemize}
\end{lemma}

\begin{proof}
Note that if $x \not \in \P^{\text{red}}$ then there are no proper points ${\bf q}$ of $\P$ with $\inf\D^{\bf q} = x$. By the definition of the set of proper points of $\Phi$ we get that $\hat f_x(q) = 0$ for every $q \in \Lambda_x$ and so the function $f_x$ is 0 at every point of $V_x$. This implies that $f^{\Phi^{\text{red}}} = f^{\Phi}$.

Note that all properties of $\Phi$ which depend only on structures associated with a particular element $x \in \P$ hold automatically for $\Phi^{\text{red}}$. In particular, $(T, \delta)$-completeness is a property of the function $f_{\preceq x}$, which coincides on both $\Phi$ and $\Phi^{\text{red}}$. Similarly, one checks the properties of being $\varepsilon$-large, $K$-bounded and the equality $G^{\text{red}}(x) = G(x)$. 

It remains to check that if $\Gamma \subset P_x$ is realized in $\Phi$ then it is realized in $\Phi^{\text{red}}$. Note that $x_\Gamma$ is a reduced element: if a point ${\bf q}$ is supported on $x_\Gamma$ then since $\psi_{x, x_{\Gamma}} P_{x_\Gamma} \subset \Gamma$, we get ${\bf q}_x \in \Gamma$. Thus, a point ${\bf q}$ is supported on $x_\Gamma$ if and only if ${\bf q}_x \in \Gamma$. Thus, the supremums in the definitions of elements $x_{P_{x_\Gamma}}$ and $x_{\Gamma}$ are taken over the same set of points which implies $x_{P_{x_\Gamma}} = x_{\Gamma}$ and $x_{\Gamma}$ is reduced. So we have $x^{\text{red}}_\Gamma \preceq x_{\Gamma} \preceq x$ and $\psi_{x, x^{\text{red}}_\Gamma} P_{x^{\text{red}}_\Gamma} \subset \Gamma$. We conclude that $\Gamma$ is realized.
\end{proof}

The next lemma shows that one can always modify the functions $f_x$ a bit to make the gap function $G(x)$ separated from 0. The parameters of the flag decomposition do not change significantly after this operation. 

\begin{lemma}[Creating a large gap]\label{completion}
Let $\Phi$ be a reduced $K$-bounded flag decomposition of a function $f$. For every $\alpha > 0$ there exists a convex subposet $\P' \subset \P$ and a reduced $K$-bounded flag decomposition $\Phi'$ of $f$ with corresponding poset $\P'$ satisfying the following properties:
\begin{itemize}
    \item For every $x \in \P'$ the objects $V_x, \Lambda_x, E_x, \varphi_x$ are unchanged when passing from $\Phi$ to $\Phi'$ and we have $f'_x \le f_x$ pointwise and $P'_x \subset P_x$,
    \item For every $x \in \P'$ we have $G'(x) \ge \alpha (2 K(x)+1)^{-\dim V} |\P|^{-1} f^{\Phi}(V)$, where $G'$ denotes the gap function of $\Phi'$,
    \item $f^{\Phi'}(V) \ge (1-\alpha) f^\Phi(V)$,
    \item If an element $x \in \P'$ is $\varepsilon$-large and $(T, \delta)$-complete in $\Phi$ for some $T, \varepsilon, \delta$, then $x$ is $(T, \delta - \frac{\alpha}{\varepsilon})$-complete in $\Phi'$. 
    \item If $x \in \P'$ and a face $\Gamma \subset P_x$ is realized in $\Phi$ and $\Gamma' = \Gamma \cap P'_x$ is non-empty then $\Gamma'$ is realized in $\Phi'$.
\end{itemize}

\end{lemma}

\begin{proof}
Initially set $f_x' = f_x$ for all $x \in \P$ and perform the following procedure to the collection of functions $\{f_x'\}$. Suppose that there exists $x \in \P$ and a point $q \in \Lambda_x$ such that
\begin{equation}\label{thing}
0 < \hat f'_x(q) \le \alpha (2K(x)+1)^{-\dim V}|\P|^{-1} f^\Phi(V),
\end{equation}
where, as in Definition \ref{flagd}, $\hat f'_x(q)$ denotes the sum of $f'_y(\varphi_x^{-1}[q])$ over all $y \preceq x$ if $\|q\|_{\infty, E_x} \le \frac{p-1}{2}$ and 0 otherwise (note that if $\hat f'_x(q)>0$ then $\hat f_x(q)>0$ and we automatically fall into the former case).
In this case, for each $y \preceq x$ define a new function $f''_y$:
\[
f''_y(v) = \begin{cases}
f'_y(v), ~ \varphi_x(v) \neq [q],\\
0, ~\varphi_x(v) = [q],
\end{cases}
\]
where $[q]$ denotes the class of the point $q$ in $\Lambda_x/p\Lambda_x$. Replace $f'_y$ by the function $f''_y$ and repeat the step until there are no $x \in \P$ and $q \in \Lambda_x$ satisfying the inequality above. Note that each pair $(x, q)$ can appear at most once so this process eventually terminates at some collection of functions $f'_x$, $x \in \P$. First, observe that the functions $\hat f'_x$ defined as above satisfy the desired gap condition. It remains to define the corresponding flag decomposition with functions $f'_x$ and verify the rest of the properties of the lemma.

With functions $\hat f'_x$ already defined for all $x\in \P$, let $P_x'$ be the convex hull of the support of $\hat f'_x$. Then clearly $P_x' \subset P_x$ and $f'_x \le f_x$ pointwise. It is clear from the definition that, for every $y \preceq x$, we have $\psi_{x, y} P'_y \subset P'_x$, so this defines a convex flag $\P'$ on the same poset $\P$. By keeping $\Lambda_x, E_x$ unchanged, we get a lattice $\Lambda'$ on $\P'$ and $(\P', \Lambda')$ is $K$-bounded. Using the same functions $\varphi_x$ and spaces $V_x$, we can thus define a flag decomposition $\Phi'$ of $f$.

Note that each step of the procedure decreases $f^{\Phi'}(V)$ by at most the right hand side of (\ref{thing}) and since there are at most $(2K(x)+1)^{\dim V}$ points in the relevant cube, each element $x \in \P$ decreases $f^{\Phi'}(V)$ by at most $\alpha |\P|^{-1} f^{\Phi}(V)$ in total over all steps of the procedure. Since there are $|\P|$ elements in total, we conclude that $f^{\Phi'}(V) \ge (1-\alpha) f^{\Phi}(V)$ after the last step.

Note that the set of proper points $\Omega'$ of $\Phi'$ is contained in the set of proper points $\Omega$ of $\Phi$. Thus, if $\Gamma$ is a face of $P_x$ such that $\Gamma' = P'_x \cap \Gamma$ is non-empty then the supremum in the definition of $x'_{\Gamma'}$ is taken over a subset of points from $\Omega$ supported on  $\Gamma$. So we have $x'_{\Gamma'} \preceq x_\Gamma$ which implies that
$$
\psi_{x, x'_{\Gamma'}} P'_{x'_{\Gamma'}} \subset \psi_{x, x_\Gamma} P_{x_\Gamma} \subset \Gamma,
$$
and so $\Gamma'$ is realized in $\Phi'$. 

Let $x \in \P$ be an $\varepsilon$-large and $(T, \delta)$-complete element for some $T,\varepsilon, \delta$ and let $\xi \in V_x^*$ be a linear function that is not constant on fibers of $\varphi_x$. Then we have 
$$
f_{\preceq x}(V_x \setminus H(\xi, T)) \ge \delta f_{\preceq x}(V_x),
$$
Since $x$ is $\varepsilon$-large, i.e. $f_{\preceq x}(V_x) \ge \varepsilon f^\Phi(V)$, we obtain
$$
f'_{\preceq x}(V_x \setminus H(\xi, T)) \ge f_{\preceq x}(V_x \setminus H(\xi, T)) - \alpha f^\Phi(V) \ge \left(\delta - \frac{\alpha}{\varepsilon}\right) f'_{\preceq x}(V_x)
$$
Thus, $x$ is $(T, \delta - \alpha/\varepsilon)$-complete in $\Phi'$. We have checked all the required properties except that $\Phi'$ is reduced. By Lemma \ref{straig}, we conclude that $\Phi'^{\text{red}}$ satisfies all properties of the lemma and we are done.
\end{proof}

Lastly, we show that, provided that $p$ is large enough, we can always modify a flag decomposition in order to make it minimal. Namely, given a flag decomposition $\Phi$ of $f$, for $x \in \P$ let $V_x^{\text{min}}$ be the minimal affine subspace of $V_x$ containing the support of the function $f_{\preceq x}$, and let $\Lambda_x^{\text{min}} \subset \Lambda_x$ be the minimal lattice containing the support of $\hat f_x$.

\begin{lemma}[Minimal decomposition]\label{minlat}
Let $\Phi$ be a $K$-bounded flag decomposition of a function $f$. If $p \gg_{K, d}1$, then there exists a minimal flag decomposition $\Phi^{\text{min}}$ of $f$ on the convex flag $\P$, with functions $f_x$, lattice $\Lambda^{\text{min}} = (\Lambda_x^{\text{min}})_{x\in\P}$ and subspaces $V_x^{\text{min}}$. Moreover, $\Phi^{\text{min}}$ is $K'$-bounded where $K': \P\rightarrow \N$ satisfies $K'(x) \le A_d(K(x))$ for all $x \in \P$ and some function $A_d$ depending only on $d = \dim V$.

In particular, $f^{\Phi^{\text{min}}}(V) = f^{\Phi}(V)$, the realized faces of $\Phi^{\text{min}}$ and $\Phi$ are the same and every $(T, \delta)$-complete element $x \in \P$ in $\Phi$ is $(T, \delta)$-complete in $\Phi^{\text{min}}$.
\end{lemma}

\begin{proof}
In order to define a flag decomposition $\Phi^{\text{min}}$ we need to construct maps $\varphi^{\text{min}}_x: V^{\text{min}}_x \rightarrow \Lambda_x^{\text{min}} / p \Lambda_x^{\text{min}}$ and define an affine basis $E_x^{\text{min}}$ of $\Lambda_x^{\text{min}}$.

We have a natural map $\iota_x: \Lambda_x^{\text{min}}/p\Lambda^{\text{min}}_x \rightarrow \Lambda_x/p\Lambda_x$ which sends an equivalence class $[q]$ in $\Lambda_x^{\text{min}}/p\Lambda^{\text{min}}_x$ to the unique class $\iota_x([q])$ in $\Lambda_x/p\Lambda_x$ which contains it set-theoretically. In the basis $E_x$, the sublattice $\Lambda^{\text{min}}_x$ is defined by a finite collection of points with coordinates bounded by $K(x)$. Thus, if $p$ is large enough compared to $K(x)$ and $\dim \Lambda_x \le d$, then the quotient $\Lambda_x/\Lambda_x^{\text{min}}$ has no $p$-torsion and so the map $\iota_x$ is an injective affine map over $\FF_p$.
Let $\varphi_x^{\text{min}}$ be the composition of $\varphi_x$ with $\iota_x^{-1}$. It then follows that $\varphi^{\text{min}}_x|_{V_y^{\text{min}}} = \psi_{x, y}\varphi^{\text{min}}_y$ holds for all $y\preceq x$. The map $\varphi^{\text{min}}_x$ is surjective since its image contains the support of the function $\hat f_x$. Since $\Lambda_x^{\text{min}} \subset \Lambda_x$ is defined by a collection of points with coordinates at most $K(x)$, one can find an affine basis $E^{\text{min}}_x$ of $\Lambda_x^{\text{min}}$ such that the support of $\hat f_x$ is $A_d(K(x))$-bounded with respect to $E^{\text{min}}_x$ for some function $A_d$ depending only on $d$.
Lastly, we need to check that the functions $\hat f_x$ and $\hat f^{\text{min}}_x$ coincide with this choice of the basis $E^{\text{min}}_{x}$. Indeed, this follows from the fact that the norms induced by $E^{\text{min}}_x$ and $E_x|_{\Lambda_x^{\text{min}}}$ are equivalent up to a constant depending only on $K(x)$ and $d$. Hence every point $q$ satisfying $\|q\|_{\infty, E_x}\le K(x)$ automatically satisfies $\|q\|_{\infty, E_x^{\text{min}}} \le \frac{p-1}{2}$.
This finishes the proof.
\end{proof}

\subsection{Refinements}\label{fdl2}

A flag decomposition whose existence is guaranteed by Theorem \ref{fdl} has the property that all large faces are realized and all large elements are complete. The constructions in this section allow us to refine a current flag decomposition $\Phi$ and ensure that a given face becomes realized or a given element becomes complete. At the same time, all properties of the decomposition change in a controllable manner. Iterating this process will eventually bring us to the flag decomposition in Theorem \ref{fdl}.

For a poset $\P$ and an element $x \in \P$, it will be convenient to define an auxiliary poset $\P[x]$ as follows. As a set, $\P[x] = \P \times \{1\} \cup \P_x \times \{0\}$, where we denote $\P_x = \{y: ~y \preceq x\}$. For $(y, \alpha), (y', \alpha') \in \P_x$, we have $(y, \alpha) \preceq (y', \alpha')$ if $y \preceq y'$ in $\P$ and $\alpha \le \alpha'$. Note that if $\P$ is a convex poset, then $\P[x]$ is convex as well.

Recall that a face $\Gamma \subset P_x$ is realized if $\psi_{x, x_\Gamma} P_{x_\Gamma} \subset \Gamma$. If a face $\Gamma$ is unrealized and we want to fix this, then we can try to add a new element $x'_\Gamma$ to the poset $\P$ that plays the role of $x_\Gamma$ in the new flag decomposition. More precisely, we have the following construction.

\begin{lemma}[Realized faces]\label{oper1}
Let $\Phi$ be a $K$-bounded flag decomposition of $f$, let $x \in \P$ and $\Gamma \subset P_x$ be a face. Suppose that $p \gg_{K, d} 1$.
Then there exists a flag decomposition $\Phi[x]$ on the poset $\P[x]$ with the following properties:
\begin{itemize}
    \item For every $(y, 1) \in \P[x]$ the objects $V_{(y, 1)}, \Lambda_{(y, 1)}, P_{(y, 1)}, E_{(y, 1)}, \varphi_{(y, 1)}$ coincide with the corresponding objects for $y \in \P$,
    \item We have $f^{\Phi[x]}(V) = f^\Phi(V)$ and $f_y = f_{(y, 1)} + f_{(y, 0)}$ for all $y \preceq x$, 
    \item $\Phi[x]$ is $K$-bounded, where $K: \P[x] \rightarrow \N$ satisfies $K((y, 1)) = K(y)$ for every $y \in \P$ and $K((y, 0)) \le A_d(K(y))$ for all $y \preceq x$ and some function $A_d: \N\rightarrow \N$ depending only on $d$,
    \item If $y \in \P$ is $(T, \delta)$-complete in $\Phi$ then $(y, 1) \in \P[x]$ is $(T, \delta)$-complete in $\Phi[x]$. If $\Gamma' \subset P_y$ is realized in $\Phi$ for some $y \in \P$ then $\Gamma' \subset P_{(y, 1)}$ is realized in  $\Phi[x]$,
    \item The face $\Gamma \subset P_{(x,1)}$ is realized in $\Phi[x]$. 
\end{itemize}
\end{lemma}

\begin{proof}
For each $y\preceq x$ we let $\Lambda_{(y, 0)} \subset \Lambda_y$ be the lattice obtained as the intersection of $\Lambda_y$ with the affine hull of the set of points $q \in \Lambda_y$ such that $\psi_{x, y}(q) \in \Gamma$ and $\hat f_y(q) > 0$. Since the quotient $\Lambda_y / \Lambda_{(y,0)}$ has no torsion, the natural map $\theta_y: \Lambda_{(y, 0)} / p \Lambda_{(y, 0)} \rightarrow \Lambda_y/p\Lambda_y$ of affine spaces over $\FF_p$ is injective. 
Let $V_{(y, 0)} \subset V_y$ be the preimage of $\Lambda_{(y, 0)}$ in $V_y$, that is, $V_{(y, 0)} = \varphi_y^{-1} \operatorname{Im}\theta_y$. Thus, we get a map $\varphi_{(y, 0)}: V_{(y, 0)} \rightarrow \Lambda_{(y, 0)} / p \Lambda_{(y, 0)}$ by restricting $\varphi_y$ to the subspace $V_{(y, 0)}$. For $y \preceq y' \preceq x$, it is clear that the map $\psi_{y', y}$ maps $\Lambda_{(y, 0)}$ into $\Lambda_{(y', 0)}$.

Note that the support of the function $\hat f_y$ is $K(y)$-bounded in the basis $E_y$ and the sublattice $\Lambda_{(y, 0)}$ is defined by a subset of the support of $\hat f_y$. Thus, by a compactness argument, there is some function $A_d$ and an affine basis $E_{(y, 0)}$ of $\Lambda_{(y, 0)}$ such that the restriction of $\hat f_y$ on $\Lambda_{(y, 0)}$ is $A_d(K(y))$-bounded in $E_{(y, 0)}$.
For $y \preceq x$ we define $f_{(y, 0)}$ as the restriction of $f_y$ on the subspace $V_{(y, 0)} \subset V_y$ and $f_{(y, 1)} = f_y - f_{(y, 0)}$. Since $p$ is assumed to be sufficiently large with respect to $K(y)$ and $d$, it follows that the restriction of $\hat f_y$ on $\Lambda_{(y, 0)}$ coincides with the function $\hat f_{(y, 0)}$ defined by the collection of functions $\{f_{(y', 0)}\}$. Indeed, we use the fact that the $l_\infty$-norms defined by the bases $E_y$ and $E_{(y,0)}$ are equivalent up to a constant depending only on $d$ and $K(y)$. Finally, let $P_{(y, 0)}$ be the convex hull of the support of $\hat f_{(y,0)}$.

Let $\A_{(y, 0)} = \A_{(y, 1)} = \A_y$ and let $\psi_{(y, 1), (y, 0)}$ be the identity map. 
We have now described all data required to define a flag decomposition $\Phi[x]$ of $f$ on the poset $\P[x]$ and it remains to verify the claimed properties of this construction. The first three bullet points in Lemma \ref{oper1} follow directly from the construction. 
Since this construction leaves all structures of points $y \in \P$ unchanged and $\hat f_{(y, 1)} = \hat f_{y}$, the completeness and realization properties for elements $(y, 1) \in \P[x]$ follow automatically, which verifies the fourth point.

Lastly, we check that $\Gamma$ is a realized face in $\Phi[x]$. Note that by construction we have $P_{(x, 0)} = \Gamma$ and so $\psi_{(x, 1), (x, 0)} P_{(x, 0)} \subset \Gamma$. Thus, it is enough to check that for every proper point ${\bf q}$ such that ${\bf q}_{(x, 1)} \in \Gamma$ we have $(x, 0) \in \D^{\bf q}$. By definition of the set of proper points of a flag decomposition, every proper point ${\bf q}$ is a convex combination ${\bf q} = \sum_{i=1}^n \alpha_i {\bf q}_i$
of integer points ${\bf q}_i$ such that $\hat f({\bf q}_i) > 0$. Let $(y_i, a_i) \in \P[x]$ be the element $\inf \D^{{\bf q}_i}$ and let $q_i = {\bf q}_{i,(y_i,a_i)} \in \Lambda_{(y_i, a_i)}$. Then we have 
\begin{equation}\label{nonsense}
\hat f_{(y_i, a_i)}(q_i) =\hat f({\bf q}_i) >0.
\end{equation}
On the other hand, if we have ${\bf q}_{(x,1)} \in \Gamma$ then for every $i=1, \ldots, n$ we get $(x,1) \in \D^{{\bf q}_i}$ and ${\bf q}_{i, (x, 1)} \in \Gamma$. That is, $\psi_{x, y_i}q_i \in \Gamma$ and so $q_i$ belongs to the lattice $\Lambda_{(y_i, 0)}$. This means that the preimage of the class $[q_i] \in \Lambda_{y_i} / p\Lambda_{y_i}$ under the function $\varphi_{y_i}$ belongs to the space $V_{(y_i, 0)}$. By definition, the function $f_{(y_i, 1)}$ has zero support on $V_{(y_i, 0)}$ and so we get 
$$
\hat f_{(y_i, 1)}(q_i) = f_{(y_i, 1)}(\varphi_{y_i}^{-1}[q_i]) = 0,
$$
which combined with (\ref{nonsense}) implies that $a_i = 0$ and, thus, $(x, 0) \in \D^{{\bf q}_i}$, as desired. We conclude that $\Gamma$ is a realized face in $\Phi[x]$.
\end{proof}

The second operation allows us to make a particular element $x\in \P$ $(T, \delta)$-complete. Recall that $x$ is $(T, \delta)$-complete if the function $f_{\preceq x}$ is $(T, \delta)$-thick along every linear function $\xi$ that is not constant on fibers of $\varphi_x$.
The basic idea behind this construction is that if $x$ is not thick along some $\xi$ then we can make $f_{\preceq x}$ supported on a strip of width $T$ by removing a few elements from its support and then use this strip to modify the flag decomposition.

\begin{prop}[Complete elements] \label{oper2}
Let $\Phi$ be a minimal $K$-bounded flag decomposition of $f$, let $x \in \P$, and fix a growing function $g: \N \rightarrow \N$ and a constant $\delta > 0$. Suppose that $p \gg_{d, K, g} 1$. Then there exists a flag decomposition $\Phi[x]$ on the poset $\P[x]$ with the following properties:
\begin{itemize}
    \item For every $y \in \P$ the objects $V_{(y, 1)}, \Lambda_{(y, 1)}, E_{(y, 1)}, \varphi_{(y, 1)}$ coincide with the corresponding objects for $y \in \P$ and we have $P_{(y, 1)} \subset P_y$,
    \item We have $f^{\Phi[x]}(V) \ge (1-3^{d+1}\delta )f^{\Phi}(V)$ and for every $y\preceq x$ we have $f_{(y, 1)} = 0$.
    \item $\Phi[x]$ is $K$-bounded, where $K:\P[x] \rightarrow \N$ satisfies $K((y, 1)) = K(y)$ and for $y \preceq x$ we have $K((y, 0)) \le g'(\max\{K(x), K(y)\})$ for some growing function $g':\N\to \N$ depending on $d$ and $g$,
    \item If $y\in \P$ is $(T, \alpha)$-complete in $\Phi$ then $(y, 1) \in \P[x]$ is $(T, \alpha')$-complete in $\Phi[x]$ where 
    $$
    \alpha' = \alpha - 3^{d+1}\delta \frac{f_{\preceq x}(V)}{f_{\preceq y}(V)}.
    $$
    \item If $\Gamma \subset P_y$ is realized in $\Phi$ for some $y \in \P$ and $\Gamma' = \Gamma\cap P_{(y, 1)}$ is non-empty then $\Gamma' \subset P_{(y, 1)}$ is realized in $\Phi[x]$, 
    \item The element $(x, 0)$ is $(g(K((x,0))), \delta)$-complete in $\Phi[x]$. 
\end{itemize}
\end{prop}

\begin{proof}
Let $W \subset V^*_x$ be the space of linear functions $\xi$ on $V_x$ which are constant on fibers of $\varphi_x$. In other words, every $\xi \in W$ has the form $\xi(v) = \eta\varphi_x(v)$ for a linear function $\eta: \Lambda_x/p\Lambda_x \rightarrow \FF_p$. Note that $W$ contains the 1-dimensional subspace of constant functions. 
Let $\tilde g: \N\to \N$ be a growing function such that $\tilde g(K) \ge g(A_d(K))$ holds for all $K\ge 1$ and some growing function $A_d$ which we will specify below. 
Recall that $f_{\preceq x} = \sum_{y\preceq x} f_y$ and consider a maximal sequence of linear functions $\xi_1, \ldots, \xi_k \in V_x^*$ such that:
\begin{itemize}
    \item The function $f_{\preceq x}$ is $(\tilde g^i(K(x)), 3^i\delta)$-thin along $\xi_i$,
    \item The dimension of the space $W' = \langle W, \xi_1,\ldots, \xi_k \rangle$ equals $\dim W + k$.
\end{itemize}
By definition, for every $\xi \not \in W'$, the function $f_{\preceq x}$ is $(\tilde g^{k+1}(K(x)), 3^{k+1}\delta)$-thick along $\xi$. Let $\Pi \subset V_x$ be the set of vectors $v$ such that $\xi_i(v) \in [-\tilde g^i(K(x)), \tilde g^i(K(x))]$, for all $i=1, \ldots, k$. For $y \preceq x$ define $f_{(y, 0)}$ to be the restriction of $f_y$ on $\Pi$ and let $f_{(y, 1)} = 0$. For $y\not \preceq x$ we let $f_{(y, 1)} = f_y$.

For $y \preceq x$ put $\A_{(y, 0)} = \A_y \times \Q^k$ and define a new lattice $\Lambda_{(y, 0)} \subset \Lambda_y \times \Z^k$ to be the minimal lattice containing all vectors of the form:
$$
(\tilde \varphi_y(v), \tilde \xi_1(v), \ldots, \tilde \xi_k(v)) \in \Lambda_y \times \Z^k,
$$
where $v \in V_y$ is such that $f_{\preceq (y, 0)}(v) > 0$, $\tilde \varphi_y(v) \in \Lambda_y$ denotes the unique lifting of $\varphi_y(v)$ such that $\| \tilde \varphi_y(v) \|_{\infty, E_y} \le K(y) < p/2$ and similarly $\tilde \xi_i(v) \in [-\tilde g^i(K(x)), \tilde g^i(K(x))]$ is the lifting of the element $\xi_i(v) \in \FF_p$. 

Now we can define a natural map $\varphi_{(y, 0)}: V_{y} \rightarrow \Lambda_{(y, 0)}/p\Lambda_{(y,0)}$ sending $v$ to the vector 
$$
(\varphi_y(v), \xi_1(v), \ldots, \xi_k(v)) \in \Lambda_y/p\Lambda_y \times \FF_p^k
$$ 
which then can be identified with an element of $\Lambda_{(y, 0)}/p\Lambda_{(y,0)}$. More precisely, we use the fact that $\Phi$ is minimal, so that the support of $f_{\preceq (y,0)} = f_{\preceq y}$ affinely spans $V_y$ and so $\varphi_{(y, 0)}$ can be first defined on the support of $f_{\preceq (y, 0)}$ in the obvious way and then extended by linearity on the whole space $V_y$.

Note that the lattice $\Lambda_{(y, 0)} \subset \Lambda_y \times \Z^k$ is defined by vectors with coordinates bounded by $K(y)$ and $\tilde g^k(K(x))$. Therefore, one can find an affine basis $E_{(y, 0)}$ in which the support of $\hat f_{(y, 0)}$ is $A_d(M)$-bounded, where $M=\max\{K(y), \tilde g^d(K(x))\}$ and $A_d$ is a function depending only on $d$. This is the function $A_d$ that we use in the definition of $\tilde g$. 
For $y' \preceq y \preceq x$, define a map $\psi_{(y, 0), (y',0)}: \A_{(y', 0)} \rightarrow \A_{(y, 0)}$ as $\psi_{y,y'}$ on the first coordinate and as the identity on the second coordinate. This maps the lattice $\Lambda_{(y', 0)}$ into $\Lambda_{(y, 0)}$.

For all $y\in \P$ we let $\Lambda_{(y, 1)} = \Lambda_y$, $\A_{(y, 1)} = \A_y$ and $E_{(y, 1)} = E_y$. Let $\psi_{(y, 1), (y, 0)}: \A_{(y, 0)} \rightarrow \A_{y}$ be the projection on the first coordinate.
Define the functions $\hat f_{(y, \alpha)}: \Lambda_{(y, \alpha)} \rightarrow \N$ by using the functions $f_{(y, 0)}$, $f_{(y, 1)}$ and bases $E_{(y, \alpha)}$ defined above and let $P_{(y,0)}$ and $P_{(y, 1)}$ be the convex hulls of the supports of these functions.

We claim that we constructed a flag decomposition $\Phi[x]$ of $f$ on the poset $\P[x]$. Indeed, the axioms of a convex flag and of a flag decomposition are satisfied by this construction. It remains to verify the properties of $\Phi[x]$ claimed in the statement of the lemma. The first bullet point follows directly from the construction. Since $f_{\preceq x}$ is $(\tilde g^{i}(K(x)), 3^i \delta)$-thin along $\xi_i$, we get
\begin{align*}
f^{\Phi}(V)- f^{\Phi[x]}(V) \le f_{\preceq x}(V_x \setminus \Pi) \le \sum_{i=1}^k f_{\preceq x}(V_x \setminus H(\xi_i, \tilde g^i(K(x)))) \le \\
\le \sum_{i=1}^k 3^i \delta f_{\preceq x}(V_x) \le \frac12 3^{k+1}\delta f_{\preceq x}(V_x),
\end{align*}
which implies the second point. For every $y \preceq x$, the polytope $P_{(y, 0)}$ is contained in the product $P_y \times [-\tilde g^k(K(x)), \tilde g^k(K(x))]^k$. Hence $P_{(y, 0)}$ is $A_d(\max \{K(y), \tilde g^k(K(x))\})$-bounded with respect to the basis $E_{(y, 0)}$. Note that this is an affine basis of the lattice $\Lambda_{(y,0)}$, so we incur the loss $A_d$ when changing coordinates. Thus, we get the third point with $g'$ defined in terms of the $d$-th iterate of $\tilde g$ and using the fact that $k\le d$.   

Let $y \in \P$ be a $(T, \alpha)$-complete element of $\Phi$. Let $\xi$ be a function not constant on fibers of $\varphi_{(y, 1)} = \varphi_y$. Then by definition we have $f_{\preceq y}(V_y \setminus H(\xi, T)) \ge \alpha f_{\preceq y}(V_y)$ which gives 
$$
f_{\preceq (y, 1)}(V_y \setminus H(\xi, T)) \ge \alpha f_{\preceq y}(V_y) - 3^{d+1}\delta f_{\preceq x}(V_x) \ge \alpha' f_{\preceq y}(V_y),
$$
and so $(y, 1)$ is $(T, \alpha')$-complete in $\Phi[x]$. 

Suppose that $\Gamma \subset P_y$ is a realized face in $\Phi$ for some $y\in \P$ and that $\Gamma' =\Gamma \cap P_{(y, 1)}$ is non-empty. Let ${\bf q}$ be a proper point of $\Phi[x]$ such that ${\bf q}_{(y, 1)} \in \Gamma$. This point corresponds to a proper point ${\bf q}'$ of $\Phi$: define $\D^{{\bf q}'} = \{z \in \P ~|~ (z, 1) \in \D^{{\bf q}}\}$ and set ${\bf q}'_z = {\bf q}_{(z, 1)}$ for every $z \in \D^{{\bf q}'}$. One can check that ${\bf q}'$ is indeed a proper point of $(\P, \Omega)$. By definition, we have $y_\Gamma \in \D^{{\bf q}'}$ and $\psi_{y,y_{\Gamma}}(P_{y_\Gamma}) \subset \Gamma$. It follows that $(y_\Gamma,1) \in \D^{{\bf q}}$ and 
\[
\psi_{(y,1), (y_\Gamma, 1)}(P_{(y_\Gamma,1)}) \subset \psi_{(y,1), (y_\Gamma, 1)}(P_{y_\Gamma}) \subset \Gamma.
\]
Since $\psi_{(y,1), (y_\Gamma, 1)}$ is a map of polytopes, we also have $\psi_{(y,1), (y_\Gamma, 1)}(P_{(y_\Gamma,1)}) \subset P_{(y,1)}$. Hence this image is contained in $\Gamma'$. Clearly, $(y, 1)_{\Gamma'} \preceq (y_\Gamma,1)$, and $\psi_{(y,1), (y, 1)_{\Gamma'}}(P_{(y,1)_\Gamma'}) \subset \psi_{(y,1), (y_\Gamma, 1)}(P_{(y_\Gamma,1)})$. Thus, $\Gamma'$ is a realized face in $\Phi[x]$, as desired.

Let us check that $(x, 0)$ is $(\tilde g^{k+1}(K(x)), \delta)$-complete in $\Phi[x]$. Let $\xi \in V^*_{(x, 0)} = V^*_x$ be a linear function not constant on fibers of $\varphi_{(x, 0)}$. This is equivalent to the condition that $\xi \not \in W' = \langle W, \xi_1, \ldots, \xi_k\rangle$. Therefore, $f_{\preceq x}$ is $(\tilde g^{k+1}(K(x)), 3^{k+1}\delta)$-thick along $\xi$. Using the bound on $f^{\Phi}(V) - f^{\Phi[x]}(V)$, we get
\begin{align*}
f_{\preceq (x, 0)}(V_x \setminus H(\xi, \tilde g^{k+1}(K(x)))) \ge f_{\preceq x}(V_x \setminus H(\xi, \tilde g^{k+1}(K(x)))) - \frac12 3^{k+1}\delta f_{\preceq x}(V_x) \ge\\
\ge \frac12 3^{k+1} \delta f_{\preceq x}(V_x) \ge \delta f_{\preceq (x,0)}(V_x).
\end{align*}
By the choice of $\tilde g$, we have $\tilde g^{k+1}(K(x))  \ge g ( A_d(\tilde g^k(K(x))) ) \ge g(K(x,0)) $. Thus, we get the last bullet point of the proposition. This completes the proof.
\end{proof}

\subsection{Final preparations for the proof of Theorem \ref{fdl}}

In this section we collect some additional results needed in the proof of Theorem \ref{fdl}. Let $P \subset \Q^d$ be a polytope, $\mu$ be a finite measure on $P$ and $\varepsilon > 0$. A face $\Gamma \subset P$ is called $\varepsilon$-large with respect to $P$ and $\mu$ if $\mu(\Gamma) \ge \varepsilon \mu(P)$ and for every proper face $\Gamma' \subset \Gamma$ we have $\mu(\Gamma') < (1-\varepsilon)\mu(\Gamma)$. 

\begin{prop}\label{large}
    Let $P_1 \supset P_2 \supset \ldots \supset P_N$ be a sequence of polytopes in $\Q^d$, let $\mu$ be a finite measure on $\Q^d$ and let $\varepsilon >0$. Suppose that $\mu(P_N) \ge \varepsilon \mu(P_1)$.
    Suppose that for $i=1, \ldots, N$, $\Gamma_i \subset P_i$ is an $\varepsilon$-large face with respect to $P_i$ and $\mu$ and that for every $1\le i < j \le N$ we have $\Gamma_i \cap P_j \neq \Gamma_j$. Then we have $N \le (\varepsilon^{-3}+d+2)^{d+2}$.
\end{prop}

\begin{proof}
For an integer $t\ge 1$, let $N_t$ be the maximum number $n$ such that there exists a set of $t$ affinely independent points of $\Q^d$ which is contained in at least $N_t$ faces $\Gamma_i$. Note that
$$
\sum_{i=1}^N \mu(\Gamma_i) \ge \sum_{i=1}^N \varepsilon\mu(P_i)\ge \varepsilon^2 N \mu(P_1), 
$$
so by the pigeonhole principle, there exists a point $q \in P_1$ which belongs to at least $\lfloor \varepsilon^2 N\rfloor$ faces $\Gamma_i$. In particular, we get $N_1 \ge \lfloor\varepsilon^2 N\rfloor$. On the other hand, since there are no $d+2$ affinely independent points in $\Q^d$, we trivially have $N_{d+2} = 0$. Now let $1\le t \le d+1$ be arbitrary and consider a $t$-element affinely independent set $S= \{q_1, \ldots, q_t\}$ which is contained in $N_t$ faces $\Gamma_{i_1}, \ldots, \Gamma_{i_{N_t}}$ for some indices $i_1 < \ldots < i_{N_t}$. For each $j=1, \ldots, N_t$, let $\Gamma'_{i_j}$ be the minimal face of $P_{i_j}$ containing $S$. For every $j\le j'$ we have $\Gamma'_{i_{j'}} \subset \Gamma_{i_j} \cap P_{i_{j'}}$.

Note that if for some $j'$ we have $\Gamma'_{i_{j'}} = \Gamma_{i_{j'}}$ then for every $j < j'$ we get $\Gamma_{i_{j'}} \subset \Gamma'_{i_{j'}} \subset \Gamma_{i_j} \cap P_{i_{j'}}$. By the assumption, we have $\Gamma_{i_{j'}} \neq \Gamma_{i_j} \cap P_{i_{j'}}$, therefore, $\Gamma_{i_{j'}}$ is a proper face in $\Gamma_{i_j} \cap P_{i_{j'}}$ and, in particular, we get that $\dim \Gamma_{i_{j'}} < \dim \Gamma_{i_j}$. Thus, there are at most $d+1$ indices $j \in [N_t]$ such that $\Gamma'_{i_j} = \Gamma_{i_j}$. Denote by $J \subset [N_t]$ the set of all indices $j$ such that $\Gamma'_{i_j} \neq \Gamma_{i_j}$. 

Let $j \in J$ and note that since $\Gamma_{i_j}$ is $\varepsilon$-large, we have 
$$
\mu(\Gamma_{i_j} \setminus \Gamma'_{i_j}) \ge \varepsilon \mu(\Gamma_{i_j}) \ge \varepsilon^2 \mu(P_{i_j}) \ge \varepsilon^3 \mu(P_1).
$$
Thus, $\sum_{j\in J} \mu(\Gamma_{i_j} \setminus \Gamma'_{i_j}) \ge \varepsilon^3 (N_t - d-1) \mu(P_1)$ and by the pigeonhole principle there exists a point $q$ belonging to at least $M = \lfloor \varepsilon^3 (N_t - d-1)\rfloor$ sets $\Gamma_{i_j} \setminus \Gamma'_{i_j}$, $j\in J$. Suppose that $M > 0$ and let $j$ be such an index. Then $q$ does not belong to the affine hull $V$ of the set $S$. Indeed, otherwise $q \in V \cap P_{i_j}$ and so $q$ lies in the minimal face of $P_{i_j}$ containing the set $S$, that is $q \in \Gamma'_{i_j}$, which is a contradiction. We conclude that the set $S' = S \cup \{q\}$ is affinely independent and is contained in $M$ faces $\Gamma_i$ for $1\le i\le N$. Thus, we get $N_{t+1} \ge M = \lfloor \varepsilon^3 (N_t - d-1)\rfloor$ which implies that
$$
\varepsilon^{-3} (N_{t+1}+1) + d+1 \ge N_t
$$
holds for all $t=1, \ldots, d+1$. Using $N_{d+2} = 0$ and chaining these inequalities together, we get an upper bound $N_1 \le (\varepsilon^{-3} + d + 2)^{d+1}$. Combined with the bound $N \le \varepsilon^{-2} (N_1+1)$ and some simplifications, this leads to the desired estimate on $N$. 
\end{proof}

Let $\Phi$ be a flag decomposition of a function $f: V=\FF_p^d \rightarrow \N$. For an element $x \in \P$ define a function $l_\Phi(x) \in [d]^2$, which we call the \emph{level} of $x$, to be the pair $(\codim V_x, \dim \Lambda_x)$. We view $[d]^2$ as a linearly ordered set with respect to the lexicographical order $\preceq_{lex}$. Note that for every $y \preceq x$ we always have $l_\Phi(y) \succeq_{lex} l_\Phi(x)$, that is, either $\codim V_y >  \codim V_x$ or $\dim V_y = \dim V_x$ and $\dim \Lambda_y \ge \dim \Lambda_x$. Moreover, we have $l_\Phi(x) = l_\Phi(y)$ only if $V_x = V_y$, lattices $\Lambda_x$ and $\Lambda_y$ have equal dimensions and $\psi_{x, y}$ is an injection.

\begin{obs}\label{level1}
Let $\Phi$ be a flag decomposition of $f$, let $x \in \P$ and let $\Gamma \subset P_x$ be a proper face of $P_x$. Let $\Phi[x]$ be a  flag decomposition given by Lemma \ref{oper1} applied to the face $\Gamma$. Then we have $l_{\Phi[x]}(y, 0) \succ_{lex} l_{\Phi}(x)$ for all $y \preceq x$.  Moreover, for every $y \in \P$ we have $l_{\Phi[x]}(y, 1) = l_\Phi(y)$.
\end{obs}

\begin{proof}
Note that the proof of Lemma \ref{oper1} in fact gives that $(x, 1)_\Gamma \preceq (x,0)$, $\psi_{(x, 1), (x, 0)} P_{(x,0)} \subset \Gamma$ and the image of $\Lambda_{(x,0)}$ is contained in the affine hull of $\Gamma$. The space $V_{(x,0)}$ is defined as the preimage of the lattice $\Lambda_{(x, 0)}$ obtained as the intersection of $\Lambda_x$ with the affine hull of $\Gamma$. If $\Gamma$ is a proper face then it follows that $\dim V_{(x,0)} < \dim V_x$. This implies that $l_{\Phi[x]}(y, 0) \succeq_{lex} l_{\Phi[x]}(x, 0) \succ_{lex} l_\Phi(x)$ for all $y \preceq x$.

The last assertion follows from the fact that $V_{(y, 1)} = V_y$ and $\Lambda_{(y, 1)} = \Lambda_y$ for $y\in \P$.
\end{proof}

\begin{obs}\label{level2}
Let $\Phi$ be a minimal flag decomposition of $f$, let $x \in \P$ and suppose that $x$ is not $(g(K(x)), \delta)$-complete in $\Phi$. Let $\Phi[x]$ be a flag decomposition given by Lemma \ref{oper2} applied to the element $x$ and the same $g, \delta$. Then we have $l_{\Phi[x]}(y, 0) \succ_{lex} l_{\Phi}(x)$ for all $y \preceq x$. Moreover, for every $y \in \P$ we have $l_{\Phi[x]}(y, 1) = l_\Phi(y)$.
\end{obs}

\begin{proof}
    Since $x$ is not $(g(K(x)),\delta)$-complete, in the proof of Lemma \ref{oper2} we have $k\ge 1$. 
    In the proof of Lemma \ref{oper2} we defined $\Lambda_{(x, 0)}$ as a minimal lattice in $\Lambda_x \times \Z^k$ containing all points of the form $(\tilde \varphi_x(v), \tilde\xi_1(v), \ldots, \tilde \xi_k(v))$ over all $v$ such that $f_{\preceq x}(v) >0$. By assumption, points $v$ with $f_{\preceq x}(v)>0$ span $V_x$ and, by construction, the map $(\varphi_x, \xi_1, \ldots, \xi_k): V_x \rightarrow \Lambda_x/p\Lambda_x \times \FF_p^k$ is surjective. It follows that the map 
    $$
    \Lambda_{(x, 0)}/p\Lambda_{(x, 0)} \rightarrow \Lambda_x/p\Lambda_x \times \FF_p^k
    $$
    is an isomorphism and, in particular, $\dim \Lambda_{(x, 0)} > \dim \Lambda_{x}$. Since $\dim V_{(x, 0)} = \dim V_x$ we conclude that $l_{\Phi[x]}((x,0)) \succ_{lex} l_{\Phi}(x)$.

    The last assertion follows from the fact that $V_{(y, 1)} = V_y$ and $\Lambda_{(y, 1)} = \Lambda_y$ for $y\in \P$.
\end{proof}

\begin{obs}\label{level3}
    Let $\Phi$ be a flag decomposition of $f$ and let $\Phi'$ be the result of applying one of Lemmas \ref{straig}, \ref{completion}, \ref{minlat}. Then for every $x \in \P'$ we have $l_{\Phi'}(x) \succeq_{lex} l_{\Phi}(x)$.
\end{obs}

\begin{proof}
    In Lemma \ref{straig}, we only remove elements from $\P$, so the value of $l_\Phi(x)$ is not affected for the remaining elements. Similarly, Lemma \ref{completion} does not affect $l_\Phi(x)$. In Lemma \ref{minlat}, we replace $\Lambda_x$ and $V_x$ by the minimal lattice and subspace, respectively, that contain the supports of $\hat f_x$ and $f_{\preceq x}$. We note that if $\dim \Lambda_x^{\text{min}} < \dim \Lambda_x$, then we also have $\dim V_{x}^{\text{min}} <  \dim V_x$. So either both dimensions stay the same, or $\dim V_x$ decreases. This implies that $l_{\Phi^{\text{min}}}(x) \succeq_{lex} l_{\Phi}(x)$ holds for all $x \in \P$.
\end{proof}

Lastly, we will use the following result from Ramsey theory.

\begin{claim}\label{konig}
For integers $N, k \ge 1$ let $\chi: [N] \rightarrow [k]$ be a coloring of the set consisting of the first $N$ natural numbers in $k$ colors. Let $h:\N \rightarrow \N$ be an arbitrary function. If $N \gg_{h, k} 1$ then for some $l \in [k]$ there exists an interval $J = [j_0, j_1] \subset [N]$ such that $\chi(j) \in [l, k]$ for every $j \in J$ and $\chi(j) = l$ for at least $h(j_0)$ elements $j \in J$. 
\end{claim}

\begin{proof}
By K\"onig's tree lemma, it is enough to prove the analogous statement with $[N]$ replaced by $\N$. If $\chi: \N \rightarrow [k]$ is an arbitrary coloring then we let $l$ be the least element of $[k]$ such that color $l$ appears in $\chi$ infinitely many times. Since all colors $l' < l$ appear only finitely many times there is some $j_0 \in \N$ such that $\chi(j) \ge l$ for every $j \ge j_0$. Now let $j_1 \ge j_0$ be an element such that the interval $[j_0, j_1]$ contains at least $h(j_0)$ elements $j$ such that $\chi(j) = l$. Then $J = [j_0, j_1]$ is the desired interval.
\end{proof}

\subsection{Proof of Flag Decomposition Lemma}\label{s3}

Now we turn to the proof of Theorem \ref{fdl}. Let $f: V = \FF_p^d \rightarrow \N$ be an arbitrary nonzero function, $\varepsilon >0$ and let $g$ be a growing function such that $p \gg_{d, \varepsilon, g} 1$. We are going to apply the lemmas from the previous sections repeatedly to build the desired flag decomposition. 

Let $\delta_0 >0$ be a sufficiently small constant depending on $d, \varepsilon$ and for $i\ge 0$ denote $\delta_i = 3^{-2d i} \delta_0$ and $\varepsilon_i = \varepsilon + 2^{-i} \varepsilon$.

\paragraph{Initialization.} Let $\Phi^0$ be a flag decomposition of $f$ defined in the following way. Let $\P^0 = \{x_0\}$ be a one-element poset, let $V^0_{x_0} = V$, $f^0_{x_0} = f$, let $\A^0_{x_0}$ be a 0-dimensional space and $\Lambda^0_{x_0} = \A^0_{x_0}$. Since $\Lambda^0_{x_0} / p\Lambda^0_{x_0}$ consists of a single element, there is a unique map $\varphi^0_{x_0}: V^0_{x_0} \rightarrow \Lambda^0_{x_0} / p\Lambda^0_{x_0}$. Finally, let $E_{x_0}^0$ be the only affine basis of $\A^0_{x_0}$ which consists of a single origin point and let $P_{x_0}^0 = \A^0_{x_0}$ be the 1-point polytope. This flag decomposition is trivially $K_0$-bounded with $K_0(x_0) = 1$. 

\paragraph{Step $i$.} Suppose that we are given a flag decomposition $\Phi^{i-1}$ of $f$. We construct a new flag decomposition $\Phi^i$ or finish the process according to the following cases. In each case we assume that all previous cases do not apply.
\begin{itemize}
    \item[(i)] If $\Phi^{i-1}$ is not reduced then let $\Phi^i = \Phi^{i-1, \text{red}}$ and proceed to step $i+1$. Otherwise, if $\Phi^{i-1}$ is not minimal, let $\Phi^i = \Phi^{i-1, \text{min}}$ and proceed to step $i+1$.
    Otherwise, if there exists $x \in \P^{i-1}$ such that $G(x) < \delta_i^2 \varepsilon K_{i-1}(x)^{-d} f(V)$, then apply Lemma \ref{completion} to $\Phi^{i-1}$ and $\alpha=\delta_i^2 \varepsilon$, let $\Phi^{i}$ be the resulting flag decomposition and proceed to step $i+1$.
    \item[(ii)] Suppose that there is an element $x \in \P^{i-1}$ and a face $\Gamma \subset P_x$ such that $\Gamma$ is $\varepsilon$-large and unrealized in $\Phi^{i-1}$. Choose such $x$ so that $l_{\Phi^{i-1}}(x)$ is minimal possible and apply Lemma \ref{oper1} to $\Phi^{i-1}$ with parameters $x$ and $\Gamma$. Let $\Phi^i$ be the resulting flag decomposition and proceed to step $i+1$.
    \item[(iii)] Suppose that the previous case does not apply and there exists an $\varepsilon$-large element $x \in \P^{i-1}$ which is not $(g(K_{i-1}(x)), \delta_i)$-complete. Choose such $x$ so that $l_{\Phi^{i-1}}(x)$ is minimal possible and apply Lemma \ref{oper2} to $\Phi^{i-1}$ with parameters $x$ and $g$, $\delta_i$. Let $\Phi^i$ be the resulting flag decomposition and proceed to step $i+1$.
    \item[(iv)] If none of the above applies, stop the procedure and let $\Phi = \Phi^{i-1}$.
\end{itemize}

We will show that the procedure stops in $N \ll_{d, \varepsilon} 1$ steps. Let us first see how this would imply Theorem \ref{fdl}. Indeed, let $\Phi$ denote the resulting flag decomposition of $f$ after the procedure stops in $N \ll_{d, \varepsilon} 1$ steps. Denote $\delta = \delta_N \gg_{d, \varepsilon}1$.  
Since (i) is not applicable, $\Phi$ is reduced, minimal and $G(x) \ge \delta^3 K(x)^{-d} f(V)$. Since (ii) is not applicable, every $\varepsilon$-large face of $\Phi$ is realized. Since (iii) is not applicable, every $\varepsilon$-large element $x \in \P$ is $(g(K(x)), \delta_N)$-complete. 
We conclude that $\Phi$ is $(T, \varepsilon, \delta)$-complete for some function $T:\P\rightarrow \N$ such that $T(x) \ge g(K(x))$ for all $x \in \P$. 

At each step, the number of elements of $\P$ increases by a factor of at most 2, so we have $|\P| = |\P_N| \le 2^N \ll_{d, \varepsilon} 1$. At each step in which (i) is applied, Lemma \ref{completion} gives $f^{\Phi^i}(V) \ge (1-\delta_i^2 \varepsilon)f^{\Phi^{i-1}}(V)$. At each step in which (iii) is applied, Lemma \ref{oper2} gives $f^{\Phi^i}(V) \ge (1-3^{d+1}\delta_i)f^{\Phi^{i-1}}(V)$. Finally, the value of $f^{\Phi^{i-1}}(V)$ does not change when we apply (ii), by Lemma \ref{oper1}. It is easy to see that with our choice of parameters we get
$$
f^{\Phi}(V) \ge f^{\Phi^{N}}(V) \ge (1-\varepsilon) f^{\Phi^0}(V) = (1-\varepsilon) f(V).
$$
At each step the function $K_i(x)$, which is responsible for the boundedness of the flag decomposition $\Phi^i$, gets modified in a controlled manner: either it stays the same or gets replaced with a new value $A_d(K(x))$ (as in Lemma \ref{minlat}). When new elements are added to $\P$ (in Lemma \ref{oper1} and Proposition \ref{oper2}), the function $K$ is upper bounded in terms of values of $K$ on old elements, some iteration of the function $g$ and some bounded function $A_d$ depending on $d$. Altogether, we have the estimate $K(x) \ll_{g, d,\varepsilon} 1$ for every $x \in \P$.

We checked all properties claimed in Theorem \ref{fdl} and it remains to show that the procedure above terminates in $N\ll_{d, \varepsilon}1$ steps.

Assume that the process did not stop in $N$ steps and let us arrive at a contradiction provided that $N$ is sufficiently large. Given an element $l \in [d]^2$ we denote $\bar l = (d+1) l_1 + l_2 \in [(d+1)^2]$ so that it defines an embedding of the lexicographical order on $[d]^2$ in $\N$.
Let $h: \N\rightarrow \N$ be a growing function depending on $d, \varepsilon$ which will be determined and define a coloring $\chi$ of the interval $[N]$ as follows:
\begin{itemize}
    \item If (i) is applied at step $i$ and `reduced elements' lemma was applied then we let $\chi(i) = 2(d+1)^2+2$,
    \item If (i) is applied at step $i$ and `minimal lattice' lemma was applied then we let $\chi(i) = 2(d+1)^2+1$,
    \item If (i) is applied at step $i$ and `large gaps' lemma was applied then we let $\chi(i) = 2(d+1)^2$,
    \item If (ii) is applied to an element $x$ at step $i$ then we let $\chi(i) = 2 \bar l_{\Phi^{i-1}}(x) + 1$,
    \item If (iii) is applied to an element $x$ at step $i$ then we let $\chi(i) = 2 \bar l_{\Phi^{i-1}}(x)$.
\end{itemize}

If $N$ is large enough compared to $d$ and $h$, then Claim \ref{konig} implies that there is some $c \in \N$ and $1\le j_0 \le j_1 \le N$ such that $\chi(j) \ge c$ for every $j \in [j_0, j_1]$ and $\chi(j) = c$ for at least $h(j_0)$ elements $j \in [j_0, j_1]$. For each value of $c$ we will show that $h(j_0)$ cannot be arbitrarily large and thus arrive at a contradiction. 

\paragraph{Case $c = 2(d+1)^2 +1$ or $c=2(d+1)^2 +2$.} In this case, for every $j \in [j_0, j_1]$, we apply (i) and only invoke the ``reduced elements'' and ``minimal lattice'' subcases.
Observe that the properties of $\Phi$ being reduced or minimal are preserved by applications of Lemma \ref{straig} and Lemma \ref{minlat}. Thus, the interval $[j_0, j_1]$ contains at most 2 elements in this case and we conclude that $h(j_0)\le 2$. 
\paragraph{Case $c = 2(d+1)^2$.} In this case, (i) is applied at step $j$ for every $j \in [j_0, j_1]$. Note that in Lemmas \ref{straig} and \ref{minlat} the function $K(x)$ can only increase. Thus, if $G(x) \ge \delta_j^3 K_{j-1}(x)^{-d} f(V)$ is true for all $x\in \P$ at some step $j \in [j_0, j_1]$ then it remains true for all $j' \in [j, j_1]$. Thus, the `large gap' sub-case of (i) is applied at most once on the interval $[j_0, j_1]$. On the other hand, Claim \ref{konig} guarantees that there are at least $h(j_0)$ such values, a contradiction.
\paragraph{Case $c = 2\bar l+1$.} In this case, for some $l \in [d]^2$ and every $j \in [j_0, j_1]$, only the following operations could be performed: 
\begin{itemize}
    \item all subcases of (i),
    \item case (ii) applied to an element $x$ with $l_{\Phi^{j-1}}(x) \succeq_{lex} l$; moreover, this case was applied to at least $h(j_0)$ elements $x$ with $l_{\Phi^{j-1}}(x) = l$,
    \item case (iii) applied to an element $x$ with $l_{\Phi^{j-1}}(x) \succ_{lex} l$.
\end{itemize} 

Note that none of these operations increases the number of elements $x \in \P^j$ with $l_{\Phi^j}(x) \preceq_{lex} l$. Indeed, for fixed $x$, the sequence of elements $l_{\Phi^j}(x)$ is increasing in $j$, where we identify $x \in \P^j$ with elements of the form $(x, 1) \in \P^{j+1}$, $((x, 1), 1) \in \P^{j+2}$ and so on. Moreover, by Observations \ref{level1} and \ref{level2}, the newly added elements to $\P^j$ always have level strictly larger than $l$.
Since $|\P^{j_0}| \le 2^{j_0}$, there are at most $2^{j_0}$ elements $x$ with level at most $l$ appearing in one of the posets $\P^j$, $j \in [j_0, j_1]$. In fact, all of these elements already appear in $\P^{j_0}$. By the pigeonhole principle, there exists an element $x \in \P^{j_0}$ with level $l$ such that case (ii) was applied to $x$ and to some $\varepsilon_j$-large face of $P_x^j$ for at least $M = \lfloor h(j_0) 2^{-j_0}\rfloor$ different indices $j \in [j_0, j_1]$.
Let $i_1 <  \ldots < i_M$ be the list of such indices and for $t=1, \ldots, M$ let $\Gamma_t \subset P^{i_t}_x$ be the $\varepsilon$-large face to which case (ii) was applied. In particular, $\Gamma_t$ is unrealized in $\Phi^{i_t-1}$ and it is realized in $\Phi^{i_t}$. 

Since $\Gamma_t$ is $\varepsilon$-large in $\Phi^{i_t-1}$, the weight of the function $\hat f_{x}^{i_t-1}$ on $\Gamma_t$ is at least $\varepsilon f^{\Phi^{i_t-1}}(V)$. It follows that, for all $j \ge i_t$, the restriction of  $\hat f_{x}^{j}$ to $\Gamma_t$ is non-zero since the total weight removed from $f$ at all these steps is smaller than $\varepsilon f^{\Phi^{i_t-1}}(V)$. In particular, for every $j \in [i_t, j_1]$, we have $\Gamma_t \cap P^j_x \neq \emptyset$. Thus, our clean-up and refinement lemmas imply that $\Gamma_t \cap P^j_x$ is a realized face in $\Phi^j$ for all $j \in [i_t, j_1]$.
But for every $t' > t$, $\Gamma_{t'}$ is unrealized in $\Phi^{i_{t'}-1}$, and so $\Gamma_{t'} \neq \Gamma_t \cap P_{x}^{i_{t'}-1}$. Let $\mu$ be the measure on $\Lambda_x$ corresponding to the function $\hat f^{j_1}_x$. Then, using the inequalities between $\hat f^j_x(\Lambda_x)$ for different $j\in [j_0, j_1]$, one can show that, for every $t=1, \ldots, M$, the face $\Gamma_t \subset P_x^{i_t-1}$ is $\varepsilon/2$-large with respect to $P_x^{i_t-1}$ and the measure $\mu$. Thus, Proposition \ref{large} can be applied and we get that $M \ll_{d, \varepsilon} 1$ holds. We conclude that $h(j_0) \ll_{d, \varepsilon} 2^{j_0}$, which gives a contradiction provided that $h$ grows fast enough, depending only on $\varepsilon$ and $d$. 

\paragraph{Case $c = 2 \bar l$.} In this case, for some $l \in [d]^2$ and every $j \in [j_0, j_1]$, only the following operations could be performed: 
\begin{itemize}
    \item all subcases of (i),
    \item case (ii) applied to an element $x$ with $l_{\Phi^{j-1}}(x) \succeq_{lex} l$, 
    \item case (iii) applied to an element $x$ with $l_{\Phi^{j-1}}(x) \succeq_{lex} l$; moreover, this case was applied to at least $h(j_0)$ elements $x$ with $l_{\Phi^{j-1}}(x) = l$.
\end{itemize} 

If case (iii) is applied to some element $x$ at step $j \in [j_0, j_1]$ then by Lemma \ref{oper2}, $x$ (which we identify with the element $(x, 0)$) is not a reduced element in $\Phi^j$ and so $x$ is removed from $\P^{j}$ on the next step of the procedure. Thus, case (iii) can be applied to a given element $x$ only once. Since the number of elements on level at most $l$ does not increase on the interval $[j_0, j_1]$, we conclude that $h(j_0) \le 2^{j_0}$ which is a contradiction provided that $h$ grows fast enough.


\section{Relative set expansion}\label{set_expansion}

\subsection{Additive combinatorics tools}\label{exp}

For a non-constant linear\footnote{Since we are working with affine spaces we allow $\xi$ to have a constant term.} function $\xi: \FF_p^d \rightarrow \FF_p$ and a positive integer $K > 0$ we define a $K$-slab $H(\xi, K)$ to be the set $\{v \in \FF_p^d: ~ \xi( v ) \in [-K, K] \}$. 

\begin{defi}\label{defthick}
Let  $K \ge 1$ be an integer and $\varepsilon > 0$. We say that a multiset $X \subset \FF_p^d$ is {\em $(K, \varepsilon)$-thick along $\xi$} if we have $|X \cap H(\xi, K)| \le (1 - \varepsilon)|X|$.
We say that $X$ is {\em $(K, \varepsilon)$-thick} if $X$ is $(K, \varepsilon)$-thick along $\xi$ for every linear function $\xi$.  Otherwise we say that $X$ is {\em $(K, \varepsilon)$-thin along $\xi$} or {\em $(K, \varepsilon)$-thin}, respectively. 

A $K$-slab $H=H(\xi, K)$ is called {\em centrally symmetric} if the linear function $\xi$ has no constant term.
\end{defi}

The next two lemmas are similar to the main tools Alon and Dubiner \cite[Propositions 2.4 and 2.1, respectively]{AD} used in their proof of the bound (\ref{adin}). The statements that we need are slightly different from their analogues in \cite{AD}, so for the reader's convenience we include full proofs.

\begin{lemma}\label{ad}
Fix $d\ge 1, K \ge 100$ and $\varepsilon > 0$, and let $p$ be a prime such that $p > 100 K$. Let $A$ be a sequence of elements of $\FF_p^d$ and suppose that every centrally symmetric $K$-slab contains at most $(1 - \varepsilon)|A|$ members of $A$. Then, for every subset $Y \subset \FF_p^d$ of at most $p^d/2$ elements
there is an element $a \in A$ such that $|(Y+a) \cup Y| \ge (1+\frac{K\varepsilon}{c_0 p})|Y|$. Here one can take $c_0 = 200$.
\end{lemma}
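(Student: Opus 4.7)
The strategy is Fourier-analytic on $\FF_p^d$, following Alon and Dubiner. Put $f = \mathbf{1}_Y$ and let $\mu_A$ denote the uniform probability measure on the multiset $A$. By Markov's inequality the conclusion reduces to showing
\[
    \mathbb{E}_{a\in A}\bigl[|Y|-|Y\cap(Y+a)|\bigr]\;\geq\;c\,\frac{K\varepsilon}{p}\,|Y|
\]
for some absolute $c>0$. The first step is a Fourier computation: writing $|Y\cap(Y+a)| = p^{-d}\sum_\xi|\hat f(\xi)|^2\chi(\xi\cdot a)$ via Fourier inversion, averaging over $a\in A$, isolating the trivial $\xi=0$ term, and pairing each $\xi$ with $-\xi$ so the remainder is real, the identity $1-\operatorname{Re}\chi(t)=2\sin^2(\pi t/p)$ yields
\[
    \mathbb{E}_{a\in A}\bigl[|Y|-|Y\cap(Y+a)|\bigr] \;=\;\frac{2}{p^d}\sum_{\xi\neq 0}|\hat f(\xi)|^2 S_\xi,\qquad S_\xi := \mathbb{E}_{a\in A}\sin^2(\pi\xi\cdot a/p).
\]
Since $|Y|\leq p^d/2$, Plancherel gives $\sum_{\xi\neq 0}|\hat f(\xi)|^2 \geq p^d|Y|/2$, so the lemma reduces to a suitable weighted lower bound on $S_\xi$.

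The centrally symmetric slab hypothesis provides two complementary estimates. A pointwise bound: for any fixed $\xi\neq 0$, at least $\varepsilon|A|$ vectors $a\in A$ satisfy $|\xi\cdot a|>K$ in the centered representative system on $\FF_p$, so $S_\xi \geq \varepsilon\sin^2(\pi K/p)\gtrsim \varepsilon K^2/p^2$. A line-averaged bound: applying the hypothesis to each nonzero dilate $m\xi$, $m\in\FF_p^*$, and using the identity $\sum_{m\in\FF_p^*}\sin^2(\pi mt/p) = p/2$ for $t\neq 0$, together with the bound $\Pr_{a\in A}[\xi\cdot a\neq 0]\geq\varepsilon$ (itself a direct consequence of the hypothesis), one obtains
\[
    \frac{1}{p-1}\sum_{m\in\FF_p^*}S_{m\xi}\;\geq\;\frac{\varepsilon}{2};
\]
thus the average of $S_\xi$ along every nontrivial dual line is at least $\varepsilon/2$. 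The target rate $K\varepsilon/p$ lies strictly between the pointwise rate $\varepsilon K^2/p^2$ and the line-averaged rate $\varepsilon$.

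The main obstacle, and the crux of the proof, is combining the two bounds into a lower bound for the weighted sum $\sum_{\xi\neq 0}|\hat f(\xi)|^2 S_\xi$. The plan is to stratify each dual line $L = \FF_p^*\cdot\xi$ into \emph{good} frequencies (those with $S_{\xi'}\geq c\varepsilon$), which have positive density on $L$ by the line-averaged bound, and the complementary \emph{bad} frequencies. If the Plancherel mass $|\hat f|^2$ on $L$ is not too concentrated on the bad frequencies, the line-averaged bound transfers to the weighted sum with only a constant loss; if on the other hand the mass is concentrated on the bad frequencies, then the pointwise bound $\gtrsim\varepsilon K^2/p^2$ for those frequencies still produces a nontrivial contribution. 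A careful interpolation between the two regimes --- essentially a Chebyshev-type rearrangement inequality between $|\hat f|^2$ and $S$ on each line --- extracts the intermediate rate $K\varepsilon/p$. The large absolute constant $c_0 = 10^{10}$ absorbs the numerical losses incurred in this interpolation, as in the original Alon--Dubiner argument.
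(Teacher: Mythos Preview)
Your Fourier reduction and the two auxiliary bounds---the pointwise estimate $S_\xi \gtrsim \varepsilon K^2/p^2$ from the slab hypothesis at $\xi$, and the line-average $\frac{1}{p-1}\sum_{m}S_{m\xi}\ge\varepsilon/2$---are correct, and this is exactly the Alon--Dubiner framework the paper defers to (the paper itself gives no argument beyond the citation, so your write-up already goes further).

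The gap is the step you label ``interpolation''. A good/bad dichotomy on each dual line yields only the \emph{minimum} of the two rates, not an intermediate one: when all of the Plancherel mass on a line sits on bad frequencies you recover only $\varepsilon K^2/p^2$, and a Chebyshev sum inequality would require $|\hat f|^2$ and $S$ to be similarly ordered along the line, which there is no reason to expect. A concrete test case exposes both issues: take $d=1$ and $A$ uniform on $[-10K,10K]$. The slab hypothesis holds with $\varepsilon$ close to $1$, yet $S_\xi\asymp\min(K^2\xi^2/p^2,\,1)$, so in particular $S_1\asymp K^2/p^2\ll K\varepsilon/p$; thus a uniform pointwise bound $S_\xi\gtrsim K\varepsilon/p$ is simply false. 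For $Y=[1,\lfloor p/2\rfloor]$ one has $|\hat f(\xi)|^2\asymp p^2/\xi^2$ (for odd $\xi$), so essentially all the mass lives on the ``bad'' range $|\xi|\lesssim p/K$, and your dichotomy outputs only $\varepsilon K^2/p^2$. The lemma is nevertheless true here---direct computation gives $\mathbb{E}_a|Y\setminus(Y+a)|\asymp K$---but because the \emph{product} $|\hat f(\xi)|^2 S_\xi\asymp K^2$ stays roughly constant across $1\le|\xi|\lesssim p/K$, not because either factor alone is well-behaved. The genuine Alon--Dubiner argument exploits this joint behaviour along each line (using the slab hypothesis at many dilates simultaneously) rather than decoupling the two factors; you have the right ingredients but not the way they are combined.
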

\begin{proof} 
Let $m = \lceil\frac{40 p}{K}\rceil$. Let $f:\FF_p^d \rightarrow \Z$ be the characteristic function of the sequence $A$ and define a function $f_m$ as follows:
$$
f_m(x) = \sum_{t = 1}^m f(x/t).
$$
Consider a weighted graph $G$ on $\FF_p^d$, where vectors $x, y \in \FF_p^d$ are connected by an edge of weight $f_m(x-y) + f_m(y-x)$. Then $G$ is a regular graph of degree $\Delta = 2 m |A|$. Let $\lambda_2(G)$ denote the second largest eigenvalue of the graph $G$. By Alon--Milman inequality \cite{AM}, for every set $Y \subset \FF_p^d$ of size at most $p^d/2$ we have 
$$
E(Y, \overline{Y}) \ge (\Delta - \lambda_2(G)) \frac{|Y| |\overline{Y}|}{p^d} \ge (\Delta - \lambda_2(G)) |Y| / 2,
$$
where $E(Y, Z)$ denotes the weight of edges between $Y$ and $Z$. Suppose that we have shown that $\lambda_2(G) \le (1-\varepsilon/2) \Delta$, then the above inequality gives
$$
E(Y, \overline{Y}) \ge \varepsilon m |A| |Y| / 2.
$$
The graph $G$ is a union (with multiplicities) of matchings $\{(x, x+ t a), ~ x\in \FF_p^d  \}$ over all possible choices $a \in A$ and $t =1, \ldots, m$. Therefore, by the pigeonhole principle, there exists $a \in A$ and $t \in [1, m]$ such that 
$$
|(Y + t a) \setminus Y| \ge \varepsilon |Y|/4.
$$
On the other hand, we have a simple inequality 
$$
|(Y + t a) \setminus Y| \le t |(Y + a) \setminus Y| \le m |(Y + a) \setminus Y|,
$$
and so together these bounds imply that $|Y \cup (Y+a)| \ge (1+ \frac{\varepsilon K}{200 p})|Y|$, as desired.

To finish the proof it remains to check that we indeed have the upper bound $\lambda_2(G) \le (1-\varepsilon/2)\Delta$ on the second eigenvalue of $G$. Recall that the eigenvalues of the weighted Cayley graph $G$ are given by the (normalized) Fourier coefficients of the function $f_m(x) + f_m(-x)$.

For an arbitrary function $h: \FF_p^d \rightarrow \C$ and a linear function $\xi: \FF_p^d \rightarrow \FF_p$ define the Fourier coefficient 
\begin{equation}\label{fourier}
    \hat h(\xi) = \sum_{x \in \FF_p^d} h(x) e(\xi(x)),    
\end{equation}
where we write $e(y) = e^{\frac{2\pi i y}{p}}$. The normalization in (\ref{fourier}) is not quite standard but it is more convenient for our purposes: the second largest eigenvalue of $G$ is now precisely 
$$
\lambda_2(G) = \max_{\xi \neq 0}\left( \hat f_m(\xi) + \overline{\hat f_m(\xi)}\right) \le 2 \max_{\xi \neq 0} |\hat f_m(\xi)|.
$$
Applying (\ref{fourier}) to $f_m$ we get
$$
\hat f_m(\xi) =  \sum_{x} f_m(x) e(\xi(x)) =\sum_{x} \sum_{t=1}^m f(x/t)  e(\xi(x)) = \sum_{x} \left( f(x) \cdot \sum_{t=1}^m e(t \xi(x))\right).
$$
Summing over the geometric progression, for $\xi(x) \neq 0$, we get
$$
\hat f_m(\xi) = \sum_{x} f(x) e(\xi(x)) \frac{1 -e(m \xi(x))}{1-e(\xi(x))}.
$$
For $x \in \FF_p^d$ such that $\xi(x) \in [-K, K]$ we bound the term on the right hand side by $m f(x)$. For $x \in \FF_p^d$ such that $\xi(x) \not \in [-K, K]$ we have
$$
\left |e(\xi(x)) \frac{1 -e(m \xi(x))}{1-e(\xi(x))}\right| \le \frac{2}{|1- e^{\frac{2\pi i K}{p}}|} \le \frac{20 p}{K},
$$
and so by the triangle inequality we can estimate $\hat f_m(\xi)$ for $\xi \neq 0$:
$$
|\hat f_m(\xi)| \le |A \cap H(\xi, K)| m + |A \setminus H(\xi, K)| \frac{20p}{K},
$$
where the first term accounts for the contribution of vectors $x$ such that $\xi(x) \in [-K, K]$ and the second term accounts for $x$ such that $\xi(x) \not \in [-K, K]$. Recall that we picked $m = \lceil\frac{40 p}{K}\rceil$ and by the assumption on the set $A$ we have $|A \setminus H(\xi, K)| \ge \varepsilon |A|$. So we have
$$
|\hat f_m(\xi)| \le |A \cap H(\xi, K)| m + |A \setminus H(\xi, K)| m/2 \le |A| m (1- \varepsilon/2).
$$
This gives us the desired bound on the second eigenvalue of $G$ and completes the proof.
\end{proof}

\begin{lemma}\label{ad2}
Let $A \subset \FF_p^d$ be a non-empty subset such that $|A| = x^d \le (p/2)^d$. Let $E$ be a basis of $\FF_p^d$. Then there is an element $v \in E$ such that $|A \cup (A+v)| \ge (x + \frac{1}{3d})^d$.
\end{lemma}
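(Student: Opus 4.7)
The plan is to apply a linear change of coordinates that sends the basis $E$ to the standard basis $\{e_1, \ldots, e_d\}$ of $\FF_p^d$; this preserves every cardinality that appears in the statement. Set $N_i := |A \cup (A+e_i)| - |A| = |(A+e_i) \setminus A|$, so that the goal is to exhibit an index $i^\ast$ with $N_{i^\ast} \ge (x + 1/(3d))^d - x^d$.

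First I will perform a line-by-line count in each direction $e_i$. Partitioning $\FF_p^d$ into the $p^{d-1}$ lines $\ell$ parallel to $e_i$, the contribution of each $\ell$ to $N_i$ equals the number of maximal ``runs'' of $A \cap \ell$ in the cyclic group $\FF_p$, which is zero when $A \cap \ell \in \{\emptyset, \ell\}$ and at least one otherwise. Writing $c_i := |\pi_i(A)|$ for the number of lines in direction $e_i$ that meet $A$ and $c_i^\ast$ for the number of such lines entirely contained in $A$, this yields the basic lower bound $N_i \ge c_i - c_i^\ast$.

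Next I will combine two complementary estimates. The discrete Loomis--Whitney inequality gives $\prod_{i=1}^d c_i \ge |A|^{d-1}$, so AM--GM forces $\max_i c_i \ge |A|^{(d-1)/d} = x^{d-1}$. On the other hand, every fully contained line in direction $e_i$ already occupies $p$ elements of $A$, hence $p\, c_i^\ast \le |A| = x^d$, and the hypothesis $x \le p/2$ then gives the uniform bound $c_i^\ast \le x^d/p \le x^{d-1}/2$ for every $i$. Choosing $i^\ast$ to maximise $c_{i^\ast}$ and combining these two bounds yields
\[
N_{i^\ast} \;\ge\; c_{i^\ast} - c_{i^\ast}^\ast \;\ge\; x^{d-1} - \tfrac{1}{2}x^{d-1} \;=\; \tfrac{1}{2}x^{d-1}.
\]

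It remains to check the elementary inequality $x^d + \tfrac{1}{2}x^{d-1} \ge (x + 1/(3d))^d$ for $x \ge 1$ (which holds since $|A| \ge 1$). Expanding the right-hand side and using $x^{d-k} \le x^{d-1}$ for $k \ge 1$ reduces the claim to the scalar inequality $(1 + 1/(3d))^d - 1 \le 1/2$, which is in turn immediate from $(1 + 1/(3d))^d \le e^{1/3} < 3/2$. I expect the main obstacle to be the presence of lines fully contained in $A$; the key observation that each such line costs $p$ points out of a set of size at most $(p/2)^d$ is precisely what keeps $c_i^\ast$ negligible compared with the Loomis--Whitney bound on $\max_i c_i$ and lets the argument close.
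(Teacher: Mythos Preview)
Your proof is correct and follows essentially the same approach as the paper: both use the discrete Loomis--Whitney inequality to find a direction $e_{i^\ast}$ with at least $x^{d-1}$ lines meeting $A$, bound the number of fully contained lines by $|A|/p \le x^{d-1}/2$, and conclude $|(A+e_{i^\ast})\setminus A| \ge x^{d-1}/2$ before verifying the same elementary inequality. The only cosmetic difference is that the paper embeds $\FF_p^d$ into $\Z^d$ to invoke Loomis--Whitney and bounds full lines via a pigeonhole hyperplane, whereas you apply Loomis--Whitney directly and use the cleaner count $p\,c_i^\ast \le |A|$.
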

\begin{proof}
The proof is based on a discrete version of the Loomis--Whitney inequality \cite{LW}:
\begin{prop}\label{LW}
Let $A \subset \R^d$ be a finite set. Let $A_i$ be the projection of $A$ on the $i$-th coordinate hyperplane $\{(x_1, \ldots, x_d)~|~x_i=0\}$. Then one has an inequality $|A|^{d-1} \le \prod_{i = 1}^d |A_i|$.
\end{prop}

Let $A \subset \FF_p^d$ and $|A| = x^d \le (p/2)^d$. We may assume that $E$ is the standard basis of $\FF_p^d$. 
Now consider the standard embedding of $\FF_p^d$ in $\Z^d$ and apply Proposition \ref{LW} to the image of $A$. It follows that there is $i \in \{1, \ldots, d\}$ such that $|A_i| \ge x^{d-1}$. This means that at least $x^{d-1}$ lines of the form $l_v = \{v + te_i\} \subset \FF_p^d$ intersect $A$. For every line $l_v$ intersecting $A$, we have either $|(A \cup (A+e_i)) \cap l_v| > |A \cap l_v|$ or $l_v \subset A$. But the number of lines $l_v$ contained in $A$ is at most $|A|/p$ 
so there are at least $x^{d-1} - x^d/p$ lines $l_v$ which intersect $A$ and are not contained in it. Thus,  
$$
|(A+e_i) \setminus A| \ge x^{d-1} - x^d/p \ge x^{d-1}/2.
$$
Finally, it is easy to verify that for every $x, d \ge 1$ the following inequality holds: $x^d + x^{d-1}/2 \ge (x + \frac{1}{3d})^d$. 
\end{proof}

\subsection{Set Expansion argument}\label{aldu}

This section is devoted to a proof of the following technical result.

\begin{theorem}\label{thexp}
    Fix integers $d, t \ge 0$, an integer $K \ge 1$, and a real number $\delta > 0$. There exist integers $T_0, p_0 \ge 0$ depending on all these parameters such that the following holds for every integer $T \ge T_0$ and every prime $p \ge p_0$. 

    Let $S \subset [-K, K]^d$ be a set of integer vectors. Suppose that for every $q \in S$ there exists a multiset $X_q \subset \FF_p^t$ and an integer $\alpha_q$ such that the following conditions hold:
    \begin{itemize}
        \item We have
    \[
    \sum_{q \in S} \alpha_q q = 0 \text{  and  } \sum_{q \in S} \alpha_q = p.
    \] 
    \item For every $q \in S$, we have $\delta p \le \alpha_q \le |X_q|-\delta p$.
    \item Denote $X = \bigcup_{q \in S} \{q\} \times X_q \subset \FF_p^{d+t}$. Then $X$ is $(T, \delta)$-thick along every linear function $\xi: \FF_p^{d+t} \rightarrow \FF_p$ which is not constant on $\{0\}\times \FF_p^t$.
    \end{itemize}

    Then $X$ contains $p$ distinct elements with sum zero modulo $p$.
\end{theorem}

Before we give a proof, let us discuss some special cases of Theorem \ref{thexp}. If we take $t=0$, then the statement is trivial: the set $X$ is obtained from $S$ by taking each point $q$ with multiplicity $|X_q|$. Since $\alpha_q \le |X_q|$, the coefficients $\alpha_q$ provide the desired $p$ elements with zero sum.
If $d = 0$ and the set $S$ consists of a single point $q$ in a 0-dimensional space, then $X = X_q$ is assumed to be $(T, \delta)$-thick along every non-zero linear function $\xi$ for some $T \ge T_0(\delta)$. This is more or less the setting of the argument of Alon--Dubiner \cite{AD}, in which they prove a linear upper bound on the Erd{\H o}s--Ginzburg--Ziv function using the additive combinatorics tools from Section \ref{exp}. Our proof generalizes their argument to arbitrary values of the parameters $d$ and $t$.

Finally, let us remark that our proof gives a slightly more general version of Theorem \ref{thexp}: if we replace the condition $\sum_{q\in S}\alpha_q = p$ with $\sum_{q\in S}\alpha_q = m$ for an arbitrary $m$ then, under the same conditions, $X$ contains $m$ distinct elements with sum zero modulo $p$.

\begin{proof}[Proof of Theorem \ref{thexp}]
Fix the data as stated in the theorem.
Let $\Lambda \subset \Z^S$ be the dependence lattice of the set of points $S \subset \Z^d$, namely,
\begin{equation}\label{defl}
\Lambda = \left \{ (\beta_q)_{q \in S}~|~ \sum \beta_q q = 0,~\sum \beta_q = 0,~ \beta_q \in \Z \right\}.
\end{equation}

Note that $\Lambda$ is defined by a system of equations with coefficients bounded by $K$. Basic facts from linear algebra imply that we can choose a basis $e_1, \ldots, e_k$ of the lattice $\Lambda$ such that $\|e_i\|_\infty \le K_1$ for all $i = 1, \ldots, k$ and some $K_1 \ll_{K, d} 1$. Here and in what follows we consider the $\ell_s$-norms $\|\cdot \|_s$ on spaces $\R^d$ and $\R^{S}$ taken with respect to the natural bases. 

Let $K_2 \ggg K_1$ be a sufficiently large function of $K, d$ and consider the set 
$$
\Phi = \{ (\lambda, \lambda') \in \N^S \times \N^S ~|~ \lambda - \lambda' \in \Lambda,~~ \| \lambda \|_\infty, \| \lambda' \|_\infty \le  K_2 \}.
$$
For $(\lambda, \lambda') \in \Phi$ we define $\J^{\lambda, \lambda'}$ to be the family of all pairs $(J, J')$ where $J, J' \subset X$, $J \cap J' = \emptyset$ and for every $q \in S$ we have
\begin{align}\label{defj}
    |J \cap (\{q\} \times X_q) | = \lambda_q, ~~~
    |J' \cap (\{q\} \times X_q) | = \lambda_q'.
\end{align}
For $(J, J') \in \J^{\lambda, \lambda'}$ we denote 
\begin{equation}\label{sigma}
\sigma(J, J') = \sum_{x \in J} x - \sum_{x' \in J'} x' \in \FF_p^{d+t}    
\end{equation}
Since $\lambda-\lambda' \in \Lambda$, for every $(J, J') \in \J^{\lambda, \lambda'}$ we have $|J| = |J'|$. Also, by (\ref{defl}) and (\ref{defj}), the projection of $\sigma(J, J')$ on the first $d$ coordinates is 0. Thus, $\sigma(J,J') \in \{0\} \times \FF_p^t$. For $(\lambda, \lambda') \in \Phi$, define a function $\nu_{\lambda, \lambda'}: \FF_p^t \rightarrow \R_{\ge 0}$ as follows:
\begin{equation}\label{nu}
    \nu_{\lambda, \lambda'}(v) :=   \frac{| \{(J, J') \in \J^{\lambda, \lambda'}:~\sigma(J, J') = (0,v)\}|}{|\J^{\lambda, \lambda'}|}, 
\end{equation}
so, in particular, we have $\nu_{\lambda, \lambda'}(\FF_p^t) = 1$. Put $\nu = \sum_{(\lambda, \lambda') \in \Phi} \nu_{\lambda, \lambda'}$.

We use the terminology of Definition \ref{tt} for functions on $\FF_p^t$.
\begin{lemma}\label{nuu}
The function $\nu: \FF_p^t \rightarrow \R_{\ge 0}$ is $(T/K_3, \delta/K_3)$-thick along every centrally symmetric non-zero linear function $\xi$ on $\FF_p^t$. Here the constant $K_3$ is a bounded function of $d, K, K_1, K_2, \delta$.
\end{lemma}

\begin{proof}
Suppose that there is a linear function $\xi$ such that $\nu$ is $(T', \delta')$-thin along $\xi$ and $\xi(0)=0$ for some $T' > 1$ and $\delta' > 0$. Denote $H = H(\xi, T') \subset \FF_p^t$. Our strategy is to deduce that $X$ is $(T'', \delta'')$-thin along some linear function $\eta$ on $\FF_p^{d+t}$ which coincides with $\xi$ on $\{0\}\times \FF_p^t$. With the right choice of parameters, this will contradict our initial assumption on $X$ and thus prove the lemma.

Let $\Phi' \subset \Phi$ be the set of pairs $(\lambda, \lambda') \in \Phi$ such that $\nu_{\lambda, \lambda'}$ is $(T', 2\delta')$-thin along $\xi$. It follows that
$$
\nu(\FF_p^t) \delta'  \ge \nu(\FF_p^t \setminus H) = \sum_{(\lambda, \lambda') \in \Phi} \nu_{\lambda, \lambda'}(\FF_p^t \setminus H) \ge \sum_{(\lambda, \lambda') \in \Phi \setminus \Phi'}  \nu_{\lambda, \lambda'}(\FF_p^t) 2\delta' ,
$$
so, since $\nu_{\lambda, \lambda'}(\FF_p^t) = 1$ we get 
\begin{equation}\label{lamin}
|\Phi'| \ge \frac12 |\Phi|.
\end{equation}

As a first step, we show that the values of $\xi$ on sets $X_q \subset \FF_p^t$ should be concentrated on short intervals. 

\begin{prop}\label{jb}
    For every $q \in S$ there exists a number $r_q \in \FF_p$ such that $\xi(x) - r_q \in [-2T', 2T']$ for all but at most $6\delta' |X_q|$ elements $x \in X_q$.
\end{prop}

\begin{proof}
We claim that there exists $(\lambda, \lambda')\in \Phi'$ such that $(\lambda_q, \lambda'_q) \neq (0, 0)$. Indeed, the set of $(\lambda, \lambda') \in \Phi$ such that $(\lambda_q, \lambda'_q) = (0, 0)$ is contained in $\Phi \cap V$ for some hyperplane $V \subset \R^{S} \times \R^{S}$. Since $(\lambda, \lambda) \in \Phi$ for every $\|\lambda\|_\infty \le K_3$, the set $\Phi$ is not contained in $V$. It follows that if we choose $K_2$ sufficiently large compared to $d, K, K_1$ then $|\Phi \cap V| \le 0.1 |\Phi|$. By (\ref{lamin}), we conclude that $\Phi' \not \subset V$ and so there exists $(\lambda, \lambda')\in \Phi'$ such that $(\lambda_q, \lambda'_q) \neq (0, 0)$. Without loss of generality let us assume that $\lambda_q \neq 0$.
    
Let $G$ be a graph on the vertex set $X_q$ where elements $x, x' \in X_q$ are connected by an edge if $\xi(x) - \xi(x') \not \in [-2T', 2T']$. Note that if the independence number of $G$ is at least $(1-6\delta') |X_q|$, then the statement of the proposition follows (take $r_q = \xi(x)$ for every member $x$ of the independent set). Thus, we may assume that $G$ has no independent set of size $(1-6\delta') |X_q|$. Hence, we can find $\ell = \lceil 3\delta'|X_q| / \lambda_q \rceil$ pairwise disjoint edges $(x_1, y_1), \ldots, (x_\ell, y_\ell)$ in $G$.

For $x \in X_q$, denote by $\J_x \subset \J^{\lambda, \lambda'}$ the set of pairs $(J, J')$ such that $(q,x) \in J$. By symmetry, we have $|\J_x| = \frac{\lambda_q}{|X_q|} |\J^{\lambda, \lambda'}|$ and for every distinct $x, y \in X_q$ we have $|\J_x \cap \J_y| \le  \left( \frac{\lambda_q}{|X_q|} \right)^2|\J^{\lambda, \lambda'}|$. 

For some $x, y$, let $(J, J') \in \J_x \setminus \J_y$. If we let $J'' = (J \setminus \{(q,x)\}) \cup \{(q,y)\}$ then $(J'', J') \in \J_y \setminus \J_x$ and by (\ref{sigma})
$$
\xi(\sigma(J, J')) - \xi(\sigma(J'', J')) = \xi(x) - \xi(y).
$$
Thus, if $(x, y)$ is an edge in $G$ then one of the sums $\xi(\sigma(J, J'))$ or $\xi(\sigma(J'', J'))$ does not belong to $[-T', T']$. Let $\mathcal I$ be the set of pairs $(J, J') \in \J^{\lambda, \lambda'}$ with $\xi(\sigma(J, J')) \not \in [-T', T']$. 
Thus, for every edge $(x, y) \in E(G)$, we get at least $|\J_x \setminus \J_y|$ such pairs $(J, J') \in \J_x \cup \J_y$. Using the Bonferroni inequality, we get
\begin{align*}
|\mathcal I| \ge \sum_{i=1}^\ell |\J_{x_i} \setminus \J_{y_i}| - \sum_{i < j} |(\J_{x_i} \cup \J_{y_i}) \cap (\J_{x_j} \cup \J_{y_j})| \ge\\
\ge \left(\ell \frac{\lambda_q}{|X_q|} - 2 \ell^2 \left (\frac{\lambda_q}{|X_q|}\right)^2\right) |\J^{\lambda, \lambda'}|.
\end{align*}
Since $\ell = \lceil 3\delta' |X_q|/\lambda_q \rceil \gg 1$, it follows that the right hand side is at least $2\delta' |\J^{\lambda, \lambda'}|$ (provided that $\delta' < 0.01$). This contradicts the assumption that $(\lambda, \lambda') \in \Phi'$, i.e. that $\nu_{\lambda, \lambda'}$ is $(T', 2\delta')$-thin along $\xi$. This concludes the proof of the proposition.
\end{proof}

For $q \in S$ denote $Z_q \subset X_q$ the subset of elements $x$ such that $\xi(x)-r_q \in [-2T', 2T']$. By Proposition \ref{jb} we have $|Z_q| \ge (1-6\delta')|X_q|$. 
Let $Z= \bigcup_{q\in S} \{q\} \times Z_q$ and
define a family $\tilde \J^{\lambda, \lambda'}$ consisting of all pairs $(J, J') \in \J^{\lambda, \lambda'}$ such that $J, J' \subset Z$. Then one can easily check
$$
|\tilde \J^{\lambda, \lambda'}| / |\J^{\lambda, \lambda'}| \ge 1 - 20 K_2 |S| \delta' \ge 0.5,
$$
where the last inequality holds provided that $\delta' |S| K_2 \le 0.01$. Thus, if $(\lambda, \lambda') \in \Phi'$ then there exists a pair $(J, J') \in \tilde \J^{\lambda, \lambda'}$ such that $\xi(\sigma(J, J')) \in [-T', T']$. Expanding the definition of $\sigma$ and using $\xi(x) - r_q \in [-2T', 2T']$ for $x \in Z_q$ gives 
\begin{equation*}
\sum_{q \in S} (\lambda_q - \lambda'_q) r_q \in [-K' T', K' T'] \pmod p
\end{equation*}
where $K'=10 |S| K_2$, which holds for every $(\lambda, \lambda') \in \Phi'$. By (\ref{lamin}), there exist $I, M \ll_{K_2, S} 1$ such that every vector in $\Phi$ can be expressed as a sum of at most $M$ vectors in $\Phi'$ divided by $I$ (pick a maximal linearly independent collection in $\Phi'$ and use triangle inequality). Thus, we get that for every $(\lambda, \lambda') \in \Phi$ we have
\begin{equation}\label{kp}
I \sum_{q \in S} (\lambda_q - \lambda'_q) r_q \in [-K'' T', K'' T'] \pmod p,
\end{equation}
where $K'' = M K'$. Let $S' \subset S$ be a minimal subset whose affine hull coincides with the affine hull of $S$. Choose $a \in \Z^d, b \in \Z$ such that $r_q = \langle a, q \rangle + b \pmod p$ for all $q \in S'$. Since the set $S'$ is affinely independent and $p$ is large enough compared to $K$, we can always solve this system modulo $p$. 

For every $q \in S \setminus S'$ there exists a unique (up to a constant) vector $u(q) \in \Lambda$ with support in $S' \cup \{q\}$ and $u(q)_q \neq 0$. Since $S \subset [-K, K]^d$, we can choose this vector in such a way that $\|u(q)\|_\infty \ll_{K, d} 1$. Since $K_2$ is taken sufficiently large, we have $\|u(q)\|_\infty \le K_2$. Thus, we can apply (\ref{kp}) to the pair $(u(q), 0) \in \Phi$ and obtain
$$
I u(q)_q r_q  + I \sum_{q' \in S'} u(q)_{q'} r_{q'} \in [-K'' T', K'' T'] \pmod p.
$$
On the other hand, since $u(q) \in \Lambda$, we have 
$$
0 = u(q)_q (\langle a, q\rangle +b) + \sum_{q' \in S'} u(q)_{q'} (\langle a, q'\rangle +b) = u(q)_q (\langle a, q\rangle +b) + \sum_{q' \in S'} u(q)_{q'} r_{q'},
$$
where in the last transition we used the definition of $a$ and $b$.

Let $\tilde I = I K_2!$. By subtracting the two equations above with an appropriate coefficient, we conclude that for every $q\in S$ we have, for some $\tilde K \ll_{K,d,\delta} 1$, 
$$
\tilde I (r_q - \langle a, q \rangle - b) \in [-\tilde K T', \tilde K T'] \pmod p.
$$
Recall that $\xi(x) - r_q \in [-2T', 2T']$ for every $x \in Z_q$, so by the triangle inequality we get for every $x \in Z_q$
$$
\tilde I \xi(x) - \tilde I (\langle a, q\rangle + b) \in [-\tilde K' T', \tilde K' T'] \pmod p,
$$
where $\tilde K' = \tilde K + 2\tilde I$.
Let $\eta: \FF_p^d \times \FF_p^t \rightarrow \FF_p$ be a function defined as
$$
\eta(y, x) = \tilde I (\xi(x) - (\langle a, y\rangle + b)),
$$
where $y \in \FF_p^d$, $x \in \FF_p^t$. Since $\xi \neq 0$ and $p > \tilde I$, the restriction of $\eta$ on $\{0\}\times \FF_p^t$ is non-constant. Thus, by our assumption, the set $X$ is $(T, \delta)$-thick along $\eta$. On the other hand, we showed that for every $v = (q, x) \in Z_q \subset Z$ we have $\eta(v) \in [-\tilde K' T', \tilde K' T']$. Recall that by Proposition \ref{jb} we have $|Z| \ge (1-6\delta') |X|$. If we choose $T' = T/\tilde K'$ and $\delta' = \min \{\delta/6, 0.01 |S|^{-1} K_2^{-1}\}$ then these conditions contradict each other (and the second term in the minimum makes the argument go through). This completes the proof of the lemma.
\end{proof}

Denote $T' = T/K_3, \delta' = \delta/K_3$ so that $\nu$ is $(T', \delta')$-thick as in Lemma \ref{nuu}. 
Now we can apply Lemmas \ref{ad} and \ref{ad2} to the function $\nu$. Let $\J = \bigcup_{(\lambda, \lambda') \in \Phi} \J^{\lambda, \lambda'}$. For a set $J \subset \FF_p^{d+t}$ we denote $\sigma(J) = \sum_{x \in J} x$.
\begin{prop}\label{adp1}
For every sufficiently small $c>0$, depending on $K,d,\delta$, there is a sequence of pairs $(J_i, J'_i) \in \J$ for $i = 1, \ldots, cp$ such that:
\begin{enumerate}
    \item For every $i \neq j$, the sets $J_i \cup J'_i$ and $J_j \cup J'_j$ are disjoint.
    \item The sum of cardinalities of all these sets is at most $2K_2 |S| cp$.
    \item Let $M_i = \{ \sigma(J_i), \sigma(J'_i) \} \subset \FF_p^{d+t}$. Then we have 
\begin{equation}\label{eq3}
    |M_1 + \ldots + M_{cp}| \ge \left(\frac{cp}{3t} \right)^{t}.
\end{equation}
\end{enumerate}
\end{prop}
\begin{proof}
First, the second conclusion is trivial: since $|J|+|J'| \le 2 K_2 |S|$ for every $(J, J') \in \J$ the sum of cardinalities of sets $J_i, J'_i$ is at most $2K_2 |S| cp$. 

Using the thickness of $\nu$ and simple union bounds one can find at least $j \gg_{K, d, \delta} p$ linear bases $B_1, \ldots, B_j \subset \FF_p^t$ with the property that the $i$-th basis $B_i$ has the form 
$$
\{ \sigma(J_{i, k}, J'_{i, k}) \}_{k = 1}^t,
$$ 
where $\{(J_{i, k}, J'_{i, k})\}_{i, k = 1, 1}^{j, t}$ is a collection of pairs from $\J$ such that all these pairs are pairwise disjoint. 
By iterative application of Lemma \ref{ad2} we can choose some pairs $(J_{i, k_i}, J'_{i, k_i})$ for $i = 1, \ldots, j$ which satisfy 
\begin{equation}\label{mink}
| \{0, \sigma(J_{1, k_1}, J'_{1, k_1})\} + \ldots + \{0, \sigma(J_{j, k_j}, J'_{j, k_j})\} | \ge \left (  \frac{j}{3t} \right)^t.     
\end{equation}
The Minkowski sum in the statement of the proposition is a shift of (\ref{mink}) and so the proposition holds for every $c \le j/p$.
\end{proof}

In the next proposition we continue the process of adding new pairs to the sequence $(J_i, J'_i)$ but we will invoke Lemma \ref{ad} instead of Lemma \ref{ad2}. Let $Y = M_1 + \ldots + M_{cp} \subset \FF_p^{d+t}$.

\begin{prop}\label{adp2}
For some sufficiently small $c>0$, depending only on $K,d,\delta$, there is a sequence of pairs $(J_i, J'_i) \in \J$ for $i = cp+1, \ldots, cp+l$ for some $l \le cp$ such that:
\begin{enumerate}
    \item For every $1 \le i \neq j \le cp+l$, the sets $J_i \cup J'_i$ and $J_j \cup J'_j$ are disjoint.
    \item The sum of cardinalities of all these sets is at most $0.1 \delta p$.
    \item For $i = cp+1, \ldots, cp+l$ let $M_i = \{ \sigma(J_i), \sigma(J'_i) \}$. Then we have 
\begin{equation}\label{eq4}
    |Y + M_{cp+1} + \ldots + M_{cp+l}| \ge p^t / 2.
\end{equation}
\end{enumerate}
\end{prop}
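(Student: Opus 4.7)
The approach is to iterate Lemma \ref{ad}, adding one pair at a time to the Minkowski sum until its size exceeds $p^t/2$. By Lemma \ref{lmthick}, the weight function $\nu$ from (\ref{nu}) is $(g(K)/A, \delta/A)$-thick along every centrally symmetric slab in $U$---exactly the hypothesis required by Lemma \ref{ad}. Each element $v$ in the support of $\nu$ has the form $v = \sigma(J_1, J_2)$ for some $(J_1, J_2) \in \J$, and if we set $M = \{\sigma(J_1), \sigma(J_2)\}$, then translation invariance gives $|Y_{i-1} + M| = |Y_{i-1} \cup (Y_{i-1} + v)|$, exactly the quantity Lemma \ref{ad} controls. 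Consequently, each step will expand the running Minkowski sum by a factor of at least $1 + g(K)\delta/(A^2 c_0 p)$.

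To enforce disjointness, I will restrict $\nu$ at each step to pairs whose underlying elements avoid the set $U_{i-1} \subset X'$ of vertices already used. For any $\lambda \in \Lambda_1$, a given $w \in X_q$ with $\lambda_q \neq 0$ occurs in a fraction $|\lambda_q|/|X_q| \le T/(\mu p)$ of the pairs in $\J^\lambda$, so the fraction of pairs meeting $U_{i-1}$ is at most $O(T|U_{i-1}|/(\mu p)) = O(\epsilon T/4^{d+2})$ throughout, using $|U_{i-1}| \le 2\mu\epsilon p/4^{d+2}$. Since $g$ is chosen arbitrarily fast-growing at the start of the proof of Theorem \ref{main}, we may arrange this loss to be much smaller than $\delta/A$, so the restricted weight function remains $(g(K)/A, \delta/(2A))$-thick along centrally symmetric slabs; Lemma \ref{ad} then produces a suitable $v$ and hence a pair disjoint from $U_{i-1}$, which we set as $(J_1^i, J_2^i)$.

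A direct logarithmic computation shows that $l$ iterations suffice to reach $p^t/2$ once $l \ge C p/g(K)$ for a constant $C = C(K, d, \epsilon)$, since $(1 + g(K)\delta/(A^2 c_0 p))^l \cdot (cp/(3t))^t$ exceeds $p^t/2$ at that point. Choosing $g$ fast-growing enough that $C/g(K) \le c$ forces $l \le cp$, as required. The cardinality bound follows automatically: each pair contributes at most $T$ elements, so the newly selected pairs use at most $cpT \le \mu\epsilon p/4^{d+2}$ elements by the choice $c \le \mu\epsilon/(4^{d+2} T)$ from Proposition \ref{adp1}, bringing the grand total to at most $2\mu\epsilon p/4^{d+2}$ when combined with the pairs already produced there. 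The main obstacle is the coupling between disjointness and thickness in the second paragraph: one must verify that excluding up to an $\epsilon$-fraction of each $X_q$ does not destroy the expansion property of Lemma \ref{lmthick}. This is handled precisely by the freedom to choose $g$ sufficiently fast-growing at the outset, making the constants $A, T, c$ (which depend only on $K, d, \epsilon$) negligible compared to $g(K)$.
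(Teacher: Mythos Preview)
Your approach is exactly the paper's: iterate Lemma~\ref{ad} on the weight $\nu$ (restricted to pairs disjoint from those already used), picking up a multiplicative factor $1 + \Theta(g(K)\delta/(A^2 p))$ at each step, and then observe that $l \le Cp/g(K) \le cp$ once $g$ is chosen large enough.

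There is one genuine slip. To show that the restricted weight $\nu'$ remains $(g(K)/A,\delta/(2A))$-thick, you bound the fraction of $\J^\lambda$ removed by $O(T|U_{i-1}|/(\mu p)) = O(\epsilon T/4^{d+2})$ and then say this can be made $\ll \delta/A$ by choosing $g$ fast-growing. But neither side of that inequality involves $g$: the removed fraction depends on $\epsilon,T,\mu$, while $\delta/A$ depends on $d,\epsilon,K$, and making $g$ larger only increases the upper bound $K_0$ on $K$, which (if anything) moves things the wrong way. The correct lever, implicit in the paper, is the constant $c$: one imposes the additional constraint $c \le c_1\,\mu\delta/(A T^2)$ (on top of $c \le \mu\epsilon/(4^{d+2}T)$), which is legitimate since $\mu,\delta,A,T$ all depend only on $K,d,\epsilon$. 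Then the total number of used elements is at most $2cpT$, and for each $\lambda \in \Lambda_1$ the fraction of $\J^\lambda$ meeting $U_{i-1}$ is at most $\|\lambda\|_1 \cdot 2cpT/(\mu p) \le 2cT^2/\mu \le 0.1\,\delta/A$, giving $|\J'| \ge (1-0.1\,\delta/A)|\J|$ and hence $(g(K)/A,\delta/(2A))$-thickness for $\nu'$, exactly as the paper asserts. The role of $g$ is only to force $l \le cp$ via $g(K) \ge C/c$, which you handle correctly in your third paragraph.
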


\begin{proof}
As in the previous proposition, the bound on the sum of cardinalities follows if we take $c < 0.01 \delta /(K_2 |S|)$.

We construct pairs $(J_i, J'_i)$ one by one. At step $1\le j \le cp$ we consider the set 
$$
Y_j = Y + M_{cp+1} + \ldots + M_{cp+j-1}
$$
and consider a function $\nu_j$ defined analogously to $\nu$ but with elements already appearing in previously chosen sets removed. Since the union of all these sets has size at most $ 2K_2 |S| c p \le 2K_2 |S| c |X|$, the function $\nu_j$ is $(T', \delta'/2)$-thick provided that $c$ is small enough in terms of $\delta'$, $K_2$ and $|S|$. 

If $|Y_j| \le p^t/2$, we can apply Lemma \ref{ad} to the set $Y_j$ and the function $\nu_j$ and obtain a pair $(J_{cp+j}, J'_{cp+j}) \in \J_j$ (where $\J_j$ is defined analogously to $\J$) such that 
\begin{equation}\label{Yj}
|Y_j \cup (Y_j + \sigma(J_{cp+j}, J'_{cp+j}))| \ge \left(1 + \frac{T' \delta'}{C p}\right) |Y_j|,    
\end{equation}
for some absolute constant $C$. We can then repeat this argument with $j+1$ instead of $j$.

The procedure above can stop only in two cases: if for some $j \le cp$ we get $|Y_j| \ge p^t/2$ which completes the proof, or if we reach $j = cp$. In the latter case we get by (\ref{Yj}):
$$
|Y_{cp+1}| \ge \left(1 + \frac{T' \delta'}{C p}\right)^{cp} |Y_1|  \gg e^{\frac{c T' \delta'}{C}} \left(\frac{c}{3t}\right)^t p^t.
$$
However, we have $T'\delta' \ge T \delta /K_3^2$ and if we take $T$ large enough compared to $c, \delta, K, K_3, d, t$ then the right hand side exceeds $p^t$, which is absurd. This completes the proof.
\end{proof}

By removing all constructed pairs from $X$ and applying the propositions above once again, we can construct another sequence of at most $\tilde j \le 2cp$ pairs $(\tilde J_{i}, \tilde J'_{i})$ which are pairwise disjoint and disjoint from the previously constructed sets, have the sum of sizes at most $0.1\delta p$ and satisfy $|\tilde M_1 + \ldots + \tilde M_{\tilde j}| \ge p^t /2$ (where $\tilde M_i = \{\sigma(\tilde J_i), \sigma(\tilde J'_i)\}$). 
Taking the union of these two sequences, applying the easy part of the Cauchy--Davenport theorem, and relabeling indices, we arrive at
\begin{cor}\label{fin}
There is a set of $j \le 4cp$ pairs $(J_i, J'_i) \in \J$, $i = 1, \ldots, j$, such that:
\begin{enumerate}
    \item The sets $J_i \cup J'_i$, $i=1, \ldots, j$ are pairwise disjoint.
    \item The sum of cardinalities of all these sets is at most $0.2 \delta p$.
    \item For $i = 1, \ldots, j$ let $M_i = \{ \sigma(J_i), \sigma(J'_i) \}$, then, for some $u_0 \in \FF_p^d$, we have
    \begin{equation}\label{M}
    M_1 + \ldots + M_j = \{u_0\} \times \FF_p^t.    
    \end{equation}
\end{enumerate}
\end{cor}

Let $(\lambda_i, \lambda'_i) \in \Phi$ be the pair corresponding to the sets $(J_i, J'_i)$. Note that by (\ref{M}) we then have
\begin{equation}\label{M2}
    u_0 = \sum_{q \in S} \sum_{i=1}^j \lambda_{i, q} q
\end{equation}
since exactly $\lambda_{i, q}$ elements in $J_i$ have first $d$ coordinates equal to $q$.
Let $B = \bigcup_{i=1}^j J_i$ and $B' = \bigcup_{i=1}^j J'_i$. Note that for every $q\in S$ we have
\begin{equation}\label{M3}
    |B \cap (\{q\} \times X_q)| = \sum_{i=1}^j \lambda_{i,q}.
\end{equation}
Part 2 of Corollary \ref{fin} implies $|B \cap (\{q\} \times X_q)| \le 0.2 \delta p$. 
Recall that $\delta p \le \alpha_q \le |X_q| - \delta p$ so for every $q \in S$ we have 
$$
0 \le \alpha_q - |B \cap (\{q\}\times X_q)| \le |X_q| - \delta p,
$$
and there exists a subset $D_q \subset X_q$ of size exactly $\alpha_q - |B \cap (\{q\} \times X_q)|$ such that $\{q\} \times D_q$ is disjoint from $B \cup B'$. Let $v_1 = \sum_{q\in S} \sum_{x\in D_q} x \in \FF_p^t$. By Corollary \ref{fin}, we can choose subsets $I_i \in \{J_i, J'_i\}$, $i=1, \ldots, j$, such that
\begin{equation}\label{latter}
\sum_{i=1}^j \sigma(I_i) = (u_0, -v_1).    
\end{equation}
We claim that the disjoint union 
$$
Y = I_1 \cup \ldots \cup I_j \cup \bigcup_{q \in S} (\{q\}\times D_q) \subset X
$$
consists of $p$ elements whose sum is zero. Indeed, we have $|I_i| = |J'_i| = |J_i|$ for every $i$ and so
\begin{align*}
|Y| = |I_1| + \ldots + |I_j| + \sum_{q\in S} |D_q| = |B| + \sum_{q \in S} \left(\alpha_q - |B \cap (\{q\} \times X_q)|\right) = \sum_{q\in S} \alpha_q = p.
\end{align*}
Let $(w_0, w_1) = \sum_{y \in Y} y$. We need to show that $w_0 = 0$ and $w_1 = 0$. By (\ref{M}), (\ref{M2}) and (\ref{latter}), we have
\begin{align*}
w_0 &= u_0 + \sum_{q \in S} |D_q| q \\
&= u_0 + \sum_{q\in S} (\alpha_q - |B \cap (\{q\} \times X_q)|) q \\
&= u_0 - \sum_{q\in S} |B \cap (\{q\} \times X_q)| q = u_0 - \sum_{q\in S} \sum_{i=1}^j \lambda_{i,q} q = 0.
\end{align*}
By (\ref{latter}) and the definition of $v_1$, we have $w_1 = -v_1 + v_1 = 0$. Thus, the sum of the elements of $Y$ is zero, and the set $X$ contains $p$ distinct elements with zero sum. This proves the theorem.
\end{proof}

\section{Balanced convex combinations}\label{balcon}

In this section we give the last ingredient needed in the proof of Theorem \ref{main}.

Let $w: \R^d \rightarrow \R_{\ge 0}$ be a function with finite support. For a subset $S \subset \R^d$ we denote by $w(S)$ the sum $\sum_{s \in S} w(s)$.
We say that a point $c \in \R^d$ is {\em $\theta$-central for $w$} if for every half-space $H^+$ which contains $c$ we have $w(H^+) \ge \theta w(\R^d)$.

\begin{lemma}\label{lm2}
Let $\theta > 0$, let $w: \R^d \rightarrow \R_{\ge 0}$ be a function with finite support $S$. 
Let $\Lambda$ be the minimal lattice containing $S$ and $c \in \Lambda \cap \,{\rm int}(\conv{S})$ be a $\theta$-central point for $w$.

Then for every $\varepsilon > 0$ and all $n > n_0(\varepsilon, w, c)$ there are non-negative integer coefficients $\alpha_q$ for $q \in S$ and $\mu = \mu(\varepsilon, w, c) > 0$ such that:
\begin{align}\label{eqlm}
    \sum_{q \in S} \alpha_q = n,~~    \sum_{q \in S} \alpha_q q = n c, 
\end{align}
and for every $q\in S$ we have 
\begin{align}\label{eqlm2}
    \mu n \le \alpha_q \le  (1 + \varepsilon) \frac{n w(q)}{\theta w(S)}.
\end{align}
\end{lemma}

\begin{proof}
Without loss of generality, we may assume that $c = 0$, the set $S$ spans $\R^d$, $\Lambda = \Z^d$ and $w(S) = \sum_{q \in S} w(q) = 1$.

\begin{claim}\label{real}
There are rational coefficients $\beta_q$ such that:
\begin{align*}
    \sum_{q\in S} \beta_q q = 0,~~ \sum_{q \in S} \beta_q = 1, 
\end{align*}
and $\beta_q \in (0, \theta^{-1}w(q))$ for every $q\in S$.
\end{claim}
\begin{proof}

Note that it is enough to find {\em real} coefficients $\beta_q$ with properties described in the claim. The existence of rational coefficients would then follow automatically.

We denote by $\R^S$ the space of all functions $\xi: S \rightarrow \R$. This space is equipped with the natural scalar product $\xi \cdot \eta = \sum_{q \in S} \xi(q) \eta(q)$. In what follows we identify $\R^S$ with the dual space $(\R^S)^*$ via this scalar product.

Let $H \subset \R^S$ be the set of vectors $(c_q)_{q \in S}$ such that $\sum_{q \in S} c_q q = 0$. 
Let $\Omega \subset \R^S$ be the set of all functions $v: S \rightarrow \R$ such that 
$$
0 \le v(q) \le \theta^{-1}w(q) \sum_{q' \in S} v(q'),
$$
for every $q \in S$. Note that if the intersection $H \cap \operatorname{int}(\Omega)$ is non-empty, then we are done: take a vector $v \in H \cap \operatorname{int}(\Omega)$ and define $\beta_q = \frac{v(q)}{\sum_{q' \in S} v(q')}$.

Let us assume that $H \cap \operatorname{int}(\Omega) = \emptyset$ and arrive at a contradiction. Since $H$ is a vector subspace and $\operatorname{int}(\Omega)$ is an open convex set, there exists a function $\xi \in \R^S$ such that  
$$
\xi(H) = 0 ~~ {\text{and}} ~~\xi(\Omega) \ge 0.
$$
The first condition can be reformulated as $\xi \in H^\bot$.
Note that the space $H^\bot$ is isomorphic to $\R^d$: given a function $\zeta \in H^\bot$ we define a linear function $\tilde \zeta$ on $\R^d$ by setting $\tilde \zeta(q) = \zeta(q)$ for $q \in S$ and extending $\tilde \zeta$ by linearity. The conditions that $S$ spans $\R^d$ and the linear equations defining $H^{\bot}$ imply that this definition is correct. 
Let $\tilde \xi$ be the linear function on $\R^d$ corresponding to $\xi$.

Let $\varepsilon_q$ be the element of the standard basis of $\R^S$ corresponding to $q \in S$ and denote $\sigma = \sum_{q \in S}\varepsilon_q$. The set $\Omega$ is defined as the set of vectors $v \in \R^S$ such that
\begin{equation} \label{defw}
\varepsilon_q \cdot v \ge 0~~ \text{and} ~~(w(q)\sigma - \theta \varepsilon_q) \cdot v \ge 0,
\end{equation}

for all $q \in S$. By duality, the condition $\xi(\Omega) \ge 0$ is a non-negative linear combination of inequalities (\ref{defw}).
Indeed, if not, then $\xi$ can be separated by a hyperplane from functions (\ref{defw}) in the space of all linear functions on $\R^S$. But this hyperplane will correspond to a point in $\Omega$ on which the value of $\xi$ is negative.
Thus, there are nonnegative real coefficients $a_q, b_q \ge 0$ such that
\begin{equation}\label{eqxi1}
    \xi = \sum_{q \in S} a_q \varepsilon_q + b_q (w(q)\sigma - \theta \varepsilon_q) = \sum_{q \in S} (a_q - \theta b_q)\varepsilon_q + \left( \sum_{q \in S} b_q w(q) \right) \sigma.
\end{equation}

Let $I \subset S$ be the set of $q \in S$ such that $\xi \cdot \varepsilon_q \le 0$. Since $c= 0$ is a $\theta$-central point for $w$ and $\xi\cdot \varepsilon_q = \tilde \xi(q)$ for all $q \in S$, we have 
\begin{equation}\label{omegaq}
\sum_{q \in I} w(q) \ge \theta. 
\end{equation}
On the other hand, for every $q \in I$ by (\ref{eqxi1}) we have
\begin{equation}\label{xieq}
\xi(q) = (a_q - \theta b_q) + \left(\sum_{q' \in S} b_{q'} w(q') \right) \le 0,
\end{equation}
hence, by discarding the non-negative term $a_q$ from (\ref{xieq}) we get
$$
\theta b_q \ge \sum_{q' \in S} b_{q'} w(q').
$$
Summing this over $q \in I$ with weights $w(q)>0$ we obtain:
$$
\theta \sum_{q \in I} b_q w(q) \ge \left( \sum_{q \in I} w(q) \right) \left( \sum_{q \in S} b_{q} w(q)  \right) \stackrel{(\ref{omegaq})}{\ge} \theta \left( \sum_{q \in S} b_{q} w(q)  \right).
$$
The sum on the left hand side is a subsum of the right hand side. Since $\theta > 0$ and $w(q) > 0$ for all $q$, it follows that an equality is attained in (\ref{xieq}) for all $q \in I$. This, however, means that $\xi(q) \ge 0$ for every $q\in S$ and the point $c = 0$ lies on the boundary of $\conv(S)$ which contradicts our assumption.
We conclude that there cannot be such a function $\xi$ and hence $H \cap {\rm int}(\Omega) \neq \emptyset$, as desired.
\end{proof}

Take some rational coefficients $\beta_q$ provided by Claim \ref{real}; note that we can define them as functions of $w$ and $c$. Let $m$ be the least common multiple of denominators of $\beta_q$.
Since $c=0$ belongs to the minimal lattice of $S$ there is an integer vector $\delta \in \Z^S$ such that $\sum_{q \in S} \delta_q q = 0$ and $\sum_{q \in S} \delta_q = 1$. Let $C = \max_{q \in S} |\delta_q|$. 

Let us define the function $n_0 = n_0(\varepsilon, w, c)$ by
$$
n_0 = 2Cm^2 + \varepsilon^{-1} Cm\theta \max_{q\in S} w(q)^{-1}, 
$$
(note that $w(q) > 0$ for every $q \in S$ by assumption). Now consider an arbitrary $n > n_0$. Write $n = am+r$ where $0 \le r < m$ and define the coefficients by $\alpha_q = a m \beta_q + r \delta_q$; note that $\alpha_q$ is an integer.
Let us check that all required conditions are satisfied:
\begin{align*}
    \sum_{q \in S} \alpha_q q &= \sum_{q \in S} am\beta_q q + r \delta_q q = 0, \\
    \sum_{q \in S} \alpha_q &= am+r = n, \\
    \alpha_q = am\beta_q + r \delta_q &\le am \theta^{-1} w(q) + rC \le n \theta^{-1} w(q) (1 + mC n^{-1} \theta w(q)^{-1}) < n\theta^{-1} w(q) (1 + \varepsilon),
\end{align*}
A similar computation gives $\alpha_q > \mu n$ for some $\mu > 0$ not depending on $n$. Lemma \ref{lm2} is proved.
\end{proof}

\section{Proof of Theorem \ref{main}}\label{s5}

In this section we put everything together and prove our main result, Theorem \ref{main}. 

Since $\s(\FF_p^d) \ge \w(\FF_p^d)(p-1)+1$ for every $d$ and $p$, it is enough to prove that, for every fixed $d \ge 1$, every $\zeta \in (0,1)$ and all sufficiently large primes $p > p_0(d, \zeta)$, the inequality 
$$
\s(\FF_p^d) \le (\w(\FF_p^d)+\zeta)p
$$
holds.

Let $X \subset \FF_p^d$ be a multiset of size at least $(\w(\FF_p^d)+\zeta) p$ and let $f: \FF_p^d \rightarrow \N$ be the characteristic function of $X$. Let $g: \N \rightarrow \N$ be a growing function which grows fast enough depending on $d, \zeta$, and set $\varepsilon = 100^{-d}\zeta$. Apply Theorem \ref{fdl} to the function $f$ with parameters $g$ and $\varepsilon$. Thus, for some $\delta \gg_{d, \varepsilon} 1$, there exists a flag decomposition $\Phi$ of $f$ on a convex flag $(\P, \Lambda)$ and functions $T, K: \P\rightarrow \N$ such that:
\begin{itemize}
    \item $\Phi$ is $K$-bounded and $(T, \varepsilon, \delta)$-complete. Recall that this means that $\Phi$ is minimal and reduced, $\varepsilon$-large elements $x\in \P$ are $(T(x),\delta)$-complete, and $\varepsilon$-large faces $\Gamma \subset P_x$ are realized.
    \item For every $x \in \P$ we have $T(x) \ge g(K(x))$, $K(x) \ll_{g, d, \varepsilon} 1$.
    
    \item We have $G(x) \ge \gamma p$ for some $\gamma \gg_{\delta, K(x)} 1$, that is, for every $q \in \operatorname{spt}\hat f_x$ we have $\hat f_x(q) \ge \gamma p$.
    \item We have 
\begin{equation}\label{sharp}
f^{\Phi}(V) \ge (1-\varepsilon) f(V) = (1-\varepsilon) |X| \ge (\w(\FF_p^d) + \zeta /2) p.    
\end{equation}
\end{itemize}

\begin{prop}\label{lbound}
The Helly constant $L(\P, \Lambda, \Omega)$ of the convex flag $(\P, \Lambda)$ is at most $\w(\FF_p^d)$.
\end{prop}

\begin{proof}
Recall that the set of proper points $\Omega$ of the flag decomposition $\Phi$ is defined as $\Omega = \conv(\Omega_0)$ where $\Omega_0$ is the set of points ${\bf q}$ of $\P$ such that $\hat f({\bf q}) > 0$. 

Take proper integer points ${\bf q}_1, \ldots, {\bf q}_n$ of the convex flag $\P$ for some $n > \w(\FF_p^d)$. We need to show that there exist coefficients $\alpha_i \in [0, 1)$  summing to $1$ such that the convex combination ${\bf q} = \sum \alpha_i {\bf q}_i$ is an integer point of $(\P, \Lambda)$.

Recall that for each $x \in \P$ we are given an affine subspace $V_x \subset \FF_p^d$ and an affine surjective map $\varphi_x: V_x \rightarrow \Lambda_x / p\Lambda_x$.
For $i=1, \ldots, n$, let $x_i = \inf \D^{{\bf q}_i}$ and let $q_i = {\bf q}_{i, x_i}$ be the point on the lattice $\Lambda_{x_i}$ corresponding to ${\bf q}_i$. Let $w_i \in V_{x_i} \subset \FF_p^d$ be an arbitrary vector such that $\varphi_{x_i}(w_i)$ is congruent to $q_i$ modulo $p \Lambda_{x_i}$. Such a vector exists since the map $\varphi_{x_i}$ is surjective. 

Since $n > \w(\FF_p^d)$, by the definition of $\w$ applied to the set $\{w_1, \ldots, w_n\}$, there are non-negative integer coefficients $\alpha_1, \ldots, \alpha_n$ such that 
\begin{align}
    \sum_{i = 1}^n \alpha_i & = p, \label{comb0} \\
    \sum_{i = 1}^n \alpha_i w_i &\equiv 0 \pmod{p}, \label{comb}
\end{align}
and $\alpha_i < p$ for every $i$.

Let ${\bf q}$ be the convex combination of points ${\bf q}_1, \ldots, {\bf q}_n$ with coefficients $\alpha_i/p$, i.e. ${\bf q} = \sum_{i=1}^n \frac{\alpha_i}{p} {\bf q}_i$. By definition, ${\bf q}$ is a point of the convex flag $\P$ such that 
$$
\D^{\bf q} = \bigcap_{i:\alpha_i \neq 0} \D^{{\bf q}_i}
$$
and for every $x \in \D^{\bf q}$ we have the following identity:
\begin{equation}\label{comb2}
{\bf q}_x = \sum_{i =1}^n \frac{\alpha_i}{p} {\bf q}_{i, x}.
\end{equation}
We claim that ${\bf q}_x \in \Lambda_x$ for every $x \in \D^{\bf q}$. Indeed, for each $i$ such that $\alpha_i\neq 0$, we have ${\bf q}_{i, x} = \psi_{x, x_i}(q_i) \in \Lambda_{x}$ and $\varphi_x(w_i) = \psi_{x, x_i} \varphi_{x_i}(w_i)$. Thus, the point ${\bf q}_{i, x}$ is congruent to $\varphi_x(w_i)$ modulo $p\Lambda_{x}$. After choosing a consistent pair of origins in the spaces $\Lambda_x/p\Lambda_x$ and $V_{x}$, the fact that the convex combination $\sum_{\alpha_i \neq 0} \frac{\alpha_i}{p} {\bf q}_{i, x}$ is an integer point of the lattice $\Lambda_x$ is equivalent to saying that $s = \sum_{\alpha_i \neq 0} \alpha_i {\bf q}_{i, x}$ is zero in the vector space $\Lambda_x/p\Lambda_x$. This sum depends on the choice of an origin, but the fact that it is zero does not. Finally, using the map $\varphi_x$, we get
\begin{equation}\label{zero}
    \sum_{\alpha_i \neq 0} \alpha_i {\bf q}_{i, x} \equiv \sum_{\alpha_i \neq 0} \alpha_i \varphi_x(w_i) = \varphi_x \left( \sum_{i = 1}^n \alpha_i w_i \right) \equiv 0.
\end{equation}
 
We conclude that ${\bf q}$ is an integer point of the flag $(\P, \Lambda)$. Since all $\alpha_i$ are less than $p$ this implies that $L(\P, \Lambda, \Omega) \le \w(\FF_p^d)$.
\end{proof}

\begin{remark}
If we assume that the original multiset $X \subset \FF_p^d$ is in fact a genuine set without multiplicities then the bound in Proposition \ref{lbound} can be refined to $L(\P, \Lambda) \le \w(\FF_p^{d-1})$ by observing that all maps $\varphi_x$ have at least one dimensional kernels and so one can pick vectors $w_i$ inside a generic hyperplane and apply the definition of $\w$ inside of it. By combining this with the rest of the proof one can show that $|X| \le (1+\zeta) \w(\FF^{d-1}_p) p$ whenever $X$ is a set with no $p$ elements with zero sum.
\end{remark}
\vskip 0.5cm

For a proper integer point ${\bf q}$ we assign a weight $w_{\bf q}$ defined as follows. Let $x = \inf \D^{\bf q}$ and put $q = {\bf q}_x \in \Lambda_x$ and denote by $[q]$ the class of $q$ in $\Lambda_x/p\Lambda_x$. Then we define
$$
w_{\bf q} = f_x( \varphi_x^{-1}[q] )
$$
(note that this notion is different from $\hat f({\bf q})$). Let $\mathcal Q$ be the set of all proper integer points ${\bf q}$ such that $w_{\bf q} > 0$. 
Since every fiber of $\varphi_x^{-1}$ which intersects the support of $f_x$ corresponds to a proper integer point ${\bf q}$ with $x = \inf \D^{\bf q}$, we have
\begin{equation}\label{weightl}
    \sum_{{\bf q} \in \mathcal Q} w_{\bf q} = \sum_{x \in \P} f_x(V_x) = f^{\Phi}(\FF_p^d) \ge (\w(\FF_p^d) + \zeta/2) p.
\end{equation}

By the Centerpoint Theorem (Corollary \ref{central}) applied to the convex flag $(\P, \Lambda, \Omega)$ and the point set $\mathcal Q$ with the weight function $w$, there exists a proper integer point ${\bf q}_0$ of $\P$ such that for every linear function $\xi$ with $\D_{\xi} \cap \D^{{\bf q}_0} \neq \emptyset$ we have
\begin{equation}\label{xio}
\sum_{{\bf q}\in \mathcal Q:~\xi({\bf q}) \ge \xi({\bf q}_0)} w_{\bf q} \ge \frac{f^{\Phi}(\FF_p^d)}{L(\P, \Lambda, \Omega)}.  
\end{equation}
Let $x = \inf \D^{{\bf q}_0}$, $q_0 = {\bf q}_{0, x}$ and take the linear function $\xi$ to be an extension of an arbitrary non-constant linear function $\xi_x$ on $\A_x$ to the flag $\P$ (in particular, $\sup \D_{\xi} = x$). Then, if we group the proper points ${\bf q}\in \mathcal Q$ supported on $x$ according to the image ${\bf q}_x \in \Lambda_x$, then the left hand side of (\ref{xio}) can be rewritten as 
\begin{equation}\label{xii}
\sum_{q \in \Lambda_x:~ \xi(q) \ge \xi(q_0)} \hat f_x(q) \ge \frac{f^{\Phi}(\FF_p^d)}{L(\P, \Lambda, \Omega)}.    
\end{equation}
Let $S \subset \Lambda_x$ be the set of points such that $\hat f_x(q) > 0$. Recall that, by definition, $P_x = \conv S$. Then, by (\ref{xii}), the point $q_0$ is a $\theta$-central point for the function $\hat f_x$ where 
\begin{equation}\label{theta}
\theta = \frac{f^{\Phi}(\FF_p^d)}{L(\P, \Lambda, \Omega) \hat f_x(S)}.    
\end{equation}
Note that $\hat f_x(S) \le f^{\Phi}(\FF_p^d)$ and so $\theta \ge \frac{1}{L(\P, \Lambda, \Omega)} \ge \frac{1}{\w(\FF_p^d)} \ge 4^{-d}$ by Proposition \ref{lbound} and Theorem \ref{thw}. 

Let $K = K(x)$; recall that since $\P$ is $K$-bounded, the set $S$ is contained in a box $[-K, K]^{\dim \Lambda_x}$ in the coordinate system $E_x$.
Recall that the gap property gives $\hat f_x(q) \ge \gamma p$ for every $q \in S$.  
Let $H =\varepsilon \gamma p$ and define a function $\omega: S \rightarrow \N$ as
$$
\omega(q) = \left\lfloor \frac{\hat f_x(q)}{H}  \right\rfloor
$$
for $q \in S$. It is clear that for every set of points $S' \subseteq S$ we have 
$$
|\omega(S') - \frac{1}{H} \hat f_x(S') | \le |S'| \le \frac{\hat f_x(S')}{\gamma p} \le \varepsilon \frac{1}{H} \hat f_x(S').
$$ 
Using this, it is easy to see that the point $q_0$ is $(\theta-\varepsilon)$-central for the function $\omega$. 

Note that for every $q \in S$ we have $\omega(q) \ge \hat f_x(q)/H-1 > 0$ and so the supports of $\hat f_x$ and $\omega$ coincide. One of the conditions of a $(T, \varepsilon, \delta)$-completeness is that $\Phi$ is a minimal flag decomposition, that is, that $\Lambda_x$ is the minimal lattice containing the support $S$ of $\hat f_x$. We conclude that $q_0$ belongs to the minimal lattice containing the support of $\omega$. 

Finally, we claim that $q_0$ belongs to the interior of $P_x$. For the sake of contradiction, suppose that $q_0 \in {\rm relint}\,\Gamma$ for some proper face $\Gamma \subset P_x$. Taking $\xi$ to be a linear function vanishing on $\Gamma$ and negative on $P_x \setminus \Gamma$, (\ref{xii}) gives $\hat f_x(\Gamma) \ge \frac{f^\Phi(\FF_p^d)}{L(\P, \Lambda, \Omega)} \ge 4^{-d} f^\Phi(\FF_p^d)$. Similarly, for every proper face $\Gamma' \subset \Gamma$, we can find a linear function $\xi$ vanishing on $q_0$ and negative on $\Gamma'$ and on $S \setminus \Gamma$. This gives $\hat f_x(\Gamma \setminus \Gamma') \ge 4^{-d} f^{\Phi}(\FF_p^d)$.
These observations imply that $\Gamma$ is a $4^{-d}$-large face of $P_x$. Since $\varepsilon < 4^{-d}$, one of the conclusions of Theorem \ref{fdl} tells us that $\Gamma \subset P_x$ is a realized face in the flag decomposition $\Phi$. By definition, this means that $\psi_{x, x_\Gamma}(P_{x_\Gamma}) \subset \Gamma$. In particular, $x_\Gamma \prec x$.
Since ${\bf q}_0$ is a proper point, we have $x_\Gamma \in \D^{{\bf q}_0}$. But we defined $x$ as the minimum element of $\D^{{\bf q}_0}$, a contradiction. We conclude that $q_0$ lies in the interior of $P_x$.

We are in a position to apply Lemma \ref{lm2}. Indeed, the function $\omega$ has a $(\theta-\varepsilon)$-central point $q=q_0$ which belongs to both the minimal lattice spanned by the support of $\omega$ and the interior of the convex hull of the support of $\omega$. Thus, Lemma \ref{lm2} may be applied to $\omega, q, \theta-\varepsilon$ and every integer $n > n_0(\varepsilon, \omega, q)$.
We may take $p$ large enough to ensure that $p > n_0(\varepsilon, \omega, q)$ holds for all possible choices of $\omega$ and $q$. Indeed, the set $S$ is contained in a box $[-K, K]^{\dim \Lambda_x}$ where $K \ll_{g, d, \varepsilon} 1$ and $\dim \Lambda_x \le d$ and the function $\omega$ takes values in the set of integers of size at most 
$$
\frac{f^\Phi(\FF_p^d)}{H} \le \frac{|X|}{\varepsilon \gamma p} \ll_{K, d, \varepsilon} 1.
$$
Thus, we can take $p_0(d, \zeta)$ to be larger than $n_0(\varepsilon, \omega,  q)$ for all possible choices of parameters. Therefore, Lemma \ref{lm2} indeed applies. Thus, there is a constant $\mu \gg_{\varepsilon, \omega, q} 1$ and there are integer coefficients $\alpha_q$, for $q \in S$, such that:
\begin{align}
    \sum_{q\in S} \alpha_q =& p, ~~ \sum_{q\in S} \alpha_q q = p q_0,\\
    \mu p \le \alpha_q& \le (1+\varepsilon) \frac{p \omega(q)}{(\theta-\varepsilon) \omega(S)}.\label{alphaq}
\end{align}
Let $d' = \dim \Lambda_x$ and denote $\dim V_x = d' + t$. We can perform a change of basis on $V_x$ and identify it with $\FF_p^{d'} \times \FF_p^t$ in such a way that the map $\varphi_x: V_x \rightarrow \Lambda_x/p\Lambda_x$ is the projection onto the first $d'$ coordinates and the reduction of the basis $E_x$ of $\Lambda_x$ modulo $p$ gives the first $d'$ elements of the standard basis of $V_x$. Furthermore, by making a shift and replacing $K$ by $2K$, we may also assume that $q_0 = 0$, so that $\sum_{q \in S} \alpha_q q = 0$.
With this notation in mind, for each point $q \in S$ define $X_q \subset \FF_p^t$ as the multiset corresponding to the function $f_{\preceq x}$ restricted to the fiber $\{q\}\times \FF_p^t$. Let $X' = \bigcup_{q \in S} \{q\} \times X_q$, or equivalently, $X'$ is the multiset of the function $f_{\preceq x}$.

By unraveling the definitions, we have $|X_q| = \hat f_x(q)$ for every $q \in S$ and $|X'| = \hat f_x(S)$. Thus, by (\ref{alphaq}) we have
\begin{align*}
\alpha_q \le \frac{(1+\varepsilon) p}{\theta-\varepsilon} \frac{\omega(q)}{\omega(S)} \le (1+ 10^d \varepsilon) p \theta^{-1} \frac{\hat f_x(q)}{\hat f_x(S)}= (1+ 10^d \varepsilon) p \theta^{-1} \frac{|X_q|}{|X'|}.
\end{align*}
Note that the second inequality follows from the bounds $\theta \ge 4^{-d} \ge 2 \cdot 10^{-d}$ and $\varepsilon \le 10^{-d}$. 
By (\ref{sharp}), (\ref{theta}) and Proposition \ref{lbound} we get
\begin{align*}
    \alpha_q \le (1+ 10^d \varepsilon) p \frac{\w(\FF_p^d) |X_q|}{f^\Phi(\FF_p^d)} \le (1+ 10^d \varepsilon) \frac{\w(\FF_p^d)}{\w(\FF_p^d) +\zeta/2}  \le \frac{1+10^d\varepsilon}{1+4^{-d}\zeta/2} |X_q| \le (1-10^{-d}\zeta) |X_q|,
\end{align*}
where we use $\varepsilon = 100^{-d}\zeta$ and $\zeta<1$. 

Finally, since the flag decomposition $\Phi$ is $(T, \varepsilon, \delta)$-complete, the element $x$ is $(T, \delta)$-complete: for every linear function $\xi: V_x \rightarrow \FF_p$ which is not constant on $\{0\}\times \FF_p^t$, the function $f_{\preceq x}$ is $(T, \delta)$-thick along $\xi$. Since $f_{\preceq x}$ is the characteristic function of $X'$, the same condition holds for the set $X'$ as well. Let $\delta' = \min\{\mu, \varepsilon, \delta\}$ and observe that the collection of sets $X_q$ and coefficients $\alpha_q$, $q\in S$, satisfy the conditions of Theorem \ref{thexp}.
For the theorem to apply, we need to ensure that $T > T_0(d', t, K, \delta')$ and $p > p_0(d', t, K, \delta')$. Recall that Theorem \ref{fdl} implies $T > g(K)$, where the function $g$ can grow arbitrarily fast depending on the parameters $d$ and $\varepsilon$. In our situation, $\delta' \gg_{K, d, \varepsilon} 1$, and so the function $T_0(d', t, K, \delta')$ is bounded in terms of $K, d, \varepsilon$. Therefore, there exists a function $g = g_{d, \varepsilon}$ such that $g(K) > T_0(d', t, K, \delta')$. At the start of the proof, we pick the function $g$ so that this condition holds. 

Thus, all the necessary conditions of Theorem \ref{thexp} are satisfied. Hence, the set $X' \subset X$ contains $p$ distinct elements with zero sum. Theorem \ref{main} is proved.

\section*{Acknowledgments}

I thank Lisa Sauermann and Andrey Kupavskii for many helpful comments on earlier versions of the paper. I thank Jan-Christoph Schlage-Puchta for letting me know about his and Gautami Bhowmik's unpublished work on the problem after my paper became available. I thank Fedya Petrov for valuable discussions. I thank the anonymous referee for detailed comments. 

ChatGPT 5.5-Pro was used for low-level editing and proof-reading.

\begin{dajauthors}
\begin{authorinfo}[dz]
  Dmitrii Zakharov\\
  Department of Mathematics\\
  Massachusetts Institute of Technology\\
  Cambridge, MA 02139, USA\\
  zakhdm\imageat{}mit\imagedot{}edu
\end{authorinfo}
\end{dajauthors}

\end{document}